\def\@settitle{%
	\vspace*{-20pt}
	\begin{flushleft}%
		\baselineskip14\p@\relax
		\normalfont\bfseries\LARGE
		\@title
	\end{flushleft}%
}
\def\@setauthors{%
	\begingroup
	\def\thanks{\protect\thanks@warning}%
	\trivlist
	\large \@topsep30\p@\relax
	\advance\@topsep by -\baselineskip
	\item\relax
	\author@andify\authors
	\def\\{\protect\linebreak}%
	\authors
	\ifx\@empty\contribs
	\else
	,\penalty-3 \space \@setcontribs
	\@closetoccontribs
	\fi
	\normalfont
	\@setaddresses
	\endtrivlist
	\endgroup
}
\def\@setaddresses{\par
	\nobreak \begingroup\raggedright
	\small
	\def\author##1{\nobreak\addvspace\smallskipamount}%
	\def\\{\unskip, \ignorespaces}%
	\interlinepenalty\@M
	\def\address##1##2{\begingroup
		\par\addvspace\bigskipamount\noindent
		\@ifnotempty{##1}{(\ignorespaces##1\unskip) }%
		{\ignorespaces##2}\par\endgroup}%
	\def\curraddr##1##2{\begingroup
		\@ifnotempty{##2}{\nobreak\noindent\curraddrname
			\@ifnotempty{##1}{, \ignorespaces##1\unskip}\/:\space
			##2\par}\endgroup}%
	\def\email##1##2{\begingroup
		\@ifnotempty{##2}{\smallskip\nobreak\noindent E-mail address%
			\@ifnotempty{##1}{, \ignorespaces##1\unskip}\/:\space
			\ttfamily##2\par}\endgroup}%
	\def\urladdr##1##2{\begingroup
		\def~{\char`\~}%
		\@ifnotempty{##2}{\nobreak\noindent\urladdrname
			\@ifnotempty{##1}{, \ignorespaces##1\unskip}\/:\space
			\ttfamily##2\par}\endgroup}%
	\addresses
	\endgroup
	\global\let\addresses=\@empty
}
\def\@setabstracta{%
	\ifvoid\abstractbox
	\else
	\skip@25\p@ \advance\skip@-\lastskip
	\advance\skip@-\baselineskip \vskip\skip@
	\box\abstractbox
	\prevdepth\z@ 
	\vskip-15pt
	\fi
}
\renewenvironment{abstract}{%
	\ifx\maketitle\relax
	\ClassWarning{\@classname}{Abstract should precede
		\protect\maketitle\space in AMS document classes; reported}%
	\fi
	\global\setbox\abstractbox=\vtop \bgroup
	\normalfont\small
	\list{}{\labelwidth\z@
		\leftmargin0pc \rightmargin\leftmargin
		\listparindent\normalparindent \itemindent\z@
		\parsep\z@ \@plus\p@
		
	}%
	\item[\hskip\labelsep\bfseries\abstractname.]%
}{%
	\endlist\egroup
	\ifx\@setabstract\relax \@setabstracta \fi
}
\def\ps@headings{\ps@empty
	\def\@evenhead{%
		\setTrue{runhead}%
		\normalfont\scriptsize
		\rlap{\thepage}\hfill
		\def\thanks{\protect\thanks@warning}%
		\leftmark{}{}}%
	\def\@oddhead{%
		\setTrue{runhead}%
		\normalfont\scriptsize
		\def\thanks{\protect\thanks@warning}%
		\rightmark{}{}\hfill \llap{\thepage}}%
	\let\@mkboth\markboth
}\ps@headings
\def\section{\@startsection{section}{1}%
	\z@{-1.2\linespacing\@plus-.5\linespacing}{.8\linespacing}%
	{\normalfont\bfseries\Large}}
\def\subsection{\@startsection{subsection}{2}%
	\z@{-.8\linespacing\@plus-.3\linespacing}{.3\linespacing\@plus.2\linespacing}%
	{\normalfont\bfseries\large}}
\def\subsubsection{\@startsection{subsubsection}{3}%
	\z@{.7\linespacing\@plus.1\linespacing}{-1.5ex}%
	{\normalfont\bfseries}}
\def\@secnumfont{\bfseries}
\newtheorem{theorem}{Theorem}[section]
\newtheorem*{theorem*}{Theorem}
\newtheorem*{corollary*}{}
\newtheorem{proposition}[theorem]{Proposition}
\newtheorem{corollary}[theorem]{Corollary}
\newtheorem{lemma}[theorem]{Lemma}
\newtheorem{example}[theorem]{Example}
\theoremstyle{definition}
\newenvironment{definition}
{\pushQED{\qed}\defin}
{\popQED\enddefin}
\newenvironment{statement}[1][\proofname]{\par
	\normalfont \topsep6\p@\@plus6\p@\relax
	\trivlist
	\item[\hskip\labelsep
	\bfseries
	#1\@addpunct{.}]\ignorespaces
}{%
	\endtrivlist\@endpefalse
}
\providecommand{\proofname}{Proof}
\numberwithin{equation}{section}
\newcommand{\dkh}{DKh}
\newcommand{\Z}{\mathbb{Z}}
\newcommand{\sg}{\mathfrak{s}}
\newcommand{\cp}{\mathfrak{p}}
\begin{document}
	
\vspace*{-40pt}
\title{A parity for \(2\)-colourable links}

\author{William Rushworth}
\address{
	Department of Mathematics and Statistics, McMaster University
}
\email{\href{mailto:will.rushworth@math.mcmaster.ca}{will.rushworth@math.mcmaster.ca}}

\def\subjclassname{\textup{2010} Mathematics Subject Classification}
\expandafter\let\csname subjclassname@1991\endcsname=\subjclassname
\expandafter\let\csname subjclassname@2000\endcsname=\subjclassname
\subjclass{57M25, 57M27, 57N70}

\keywords{link parity, virtual link concordance, Gaussian parity}

\begin{abstract}
	We introduce the \emph{\(2\)-colour parity}. It is a theory of parity for a large class of virtual links, defined using the interaction between orientations of the link components and a certain type of colouring. The \(2\)-colour parity is an extension of the Gaussian parity, to which it reduces on virtual knots. We show that the \(2\)-colour parity descends to a parity on free links. We compare the \(2\)-colour parity to other parity theories of virtual links, focusing on a theory due to Im and Park. The \(2\)-colour parity yields a strictly stronger invariant than the Im-Park parity.
	
	We introduce an invariant, the \emph{\( 2 \)-colour writhe}, that takes the form of a string of integers. The \( 2 \)-colour writhe is a concordance invariant, and so obstructs sliceness. It is also an obstruction to (\( \pm \))-amphichirality and chequerboard colourability within a concordance class.	
\end{abstract}

\maketitle

\section{Introduction}\label{1Sec:intro}
In this paper we define the \emph{\(2\)-colour parity}, a theory of parity for a large class of virtual links. A parity in the context of virtual knot theory is a designation of the classical crossings of a virtual link diagram as either \emph{even} or \emph{odd}, satisfying certain axioms. The concept of parity, due to Manturov and developed by Ilyutko, Manturov and Nikonov among others \cite{Manturov2010,Ilyutko2013}, is a powerful tool that has been used to obtain a number of results that are often difficult to obtain using other methods. While parity has been fruitful in the study of virtual knots, extensions of the concept to virtual links have been hampered by a number of defects.

Many extensions of parity to virtual links are unable to distinguish mixed crossings between the same components \cite{ImPark2013,Xu2018}. That is, given \( D_1 \) and \( D_2 \) components of a virtual link diagram, such parities declare a crossing between \( D_1 \) and \( D_2 \) as odd if and only if every crossing between \( D_1 \) and \( D_2 \) is declared as odd. It follows that the invariants extracted from these extensions of parity depend heavily on the pairwise linking numbers of the components, and are often completely determined by them. The \(2\)-colour parity does not suffer from this, and yields invariants that are strictly stronger than the pairwise linking numbers.

Manturov produced extensions of parity to virtual links that are not subject to the defect outlined above \cite{Manturov2012,Manturov2016}. However, these extensions are restricted to virtual links appearing in given cobordisms between virtual knots, or to \(2\)-component virtual links. The \(2\)-colour parity does not require any extra concordance information, and is defined for links of an arbitrary number of components.

One of the most useful parities of virtual knots is the \emph{Gaussian parity}, so-named as its definition is in terms of Gauss diagrams. Although virtual links have well-defined Gauss diagrams, the standard definition of the Gaussian parity is specific to virtual knots and does not readily extend to virtual links. The \(2\)-colour parity is an extension of the Gaussian parity to \(2\)-colourable virtual links.

Postponing the precise definition until \Cref{2Sec:definition}, a \emph{\(2\)-colouring} of a virtual link diagram is a certain colouring of it using the colours red and green; we say that a virtual link possessing such a colouring is \emph{\(2\)-colourable}. Kauffman introduced \(2\)-colourings under the name \emph{proper colourings} in \cite{Kauffman2004b}. Up to dualizing (interchanging red and green) a virtual knot diagram has only one \(2\)-colouring. Similarly, a virtual knot has only one orientation up to reversal, and one may compare the \(2\)-colouring and the orientation at classical crossings to define a parity. The construction of such a parity is naturally insensitive to colour dualizing and orientation reversal, so that a unique parity is defined. In fact, this definition recovers the Gaussian parity.

This alternate definition of the Gaussian parity has the advantage of not being specific to virtual knots; as we demonstrate in \Cref{2Sec:definition}, it can be applied to \(2\)-colourable virtual links in order to define a parity of such objects. An outline of the construction is as follows. For virtual knots, any two of the Gaussian parity, the orientation, and the \(2\)-colouring determine the third. We express this diagrammatically as
\begin{center}
	\begin{tikzpicture}
	[scale=1]
		\node[] (s0)at (0,1)  {\large parity};

		\node[] (s1)at (-1.5,0)  {\large \(2\)-colouring};

		\node[] (s2)at (1.5,0)  {\large orientation};

		\draw[<->,thick] (s0)--(s1) ;

		\draw[<->,thick] (s0)--(s2) ;

		\draw[<->,thick] (s1)--(s2) ;
\end{tikzpicture}
\end{center}
The new observation in this paper is that this relationship extends to virtual links. A virtual link has multiple inequivalent orientations and \(2\)-colourings, however, and any choice of such defines a parity. We fix the bottom-right vertex of the diagram above by working with oriented virtual links. Fixing the bottom-left vertex by picking a \(2\)-colouring, we obtain a parity: the parities obtained from two distinct \(2\)-colourings are inequivalent, in general. We use this multiplicity to define invariants by looking at the set of all \(2\)-colourings of an oriented virtual link.

While our methods are combinatorial, our results may be reformulated topologically. This and other applications of the \(2\)-colour parity will be the subject of forthcoming work.

\subsection*{Statement of results}
The main result of this paper is the introduction of the \emph{\(2\)-colour parity}. It is a strong parity on the large class of \( 2 \)-colourable oriented virtual links. Unlike previous extensions of parity, its definition does not distinguish between self- and mixed crossings, is not restricted to virtual links appearing in cobordisms, and can be computed for virtual links of an arbitrary number of components. The \(2\)-colour parity naturally extends the Gaussian parity from virtual knots to virtual links, and can be used to prove a number of topological results.

\subsubsection*{The \(2\)-colour writhe}
Let \( L \) be a \(2\)-colourable oriented virtual link. We produce a numerical invariant by looking at the set of all \(2\)-colourings of \( L \) and their associated parities. The result is a string of integers, considered up to permutation, denoted \( J^2 ( L ) \) and known as the \emph{\(2\)-colour writhe} of \( L \). We determine a number of properties of this invariant.

The \(2\)-colour writhe reduces to the odd writhe in the case of virtual knots, so that it extends that invariant to \(2\)-colourable virtual links.

\begin{statement}[\Cref{3Prop:oddwrithe}]
	Let \( K \) be a virtual knot. Then \( J^2 ( K ) = J ( K ) \), for \( J ( K ) \) the odd writhe of \( K \).
\end{statement}	

The odd writhe of virtual knots is a concordance invariant \cite{Boden2018,Rushworth2017}. The \( 2 \)-colour writhe properly extends this behaviour to virtual links.

\begin{statement}[\Cref{4Cor:writheinvariance}]
	Let \( L \) and \( L' \) be concordant \(2\)-colourable oriented virtual links. Then \( J^2 ( L ) = J^2 ( L' ) \).
\end{statement}
	
In particular, the \(2\)-colour writhe is an obstruction to sliceness. That is, it can obstruct the existence of a concordance between a virtual link and the unlink (of the appropriate number of components).

\begin{statement}[\Cref{4Cor:strongslice}]
	If \( J^2 ( L ) \) has a non-zero entry then \( L \) is not slice.
\end{statement}
	
The \(2\)-colour writhe carries information regarding (\( \pm \))-amphichirality and chequerboard colourability. Combining this with its concordance invariance, we are able to obstruct (\( \pm \))-amphichirality and chequerboard colourability within a concordance class.

\begin{statement}[\Cref{4Cor:amphiconc}]
	Let \( L \) be a \(2\)-colourable oriented virtual link such that \( J^2 ( L ) \neq -J^2 ( L ) \) (where \( -J^2 ( L ) \) denotes the string obtained by multiplying the entries of \( J^2 ( L ) \) by \( -1 \)). Then \( L \) is not concordant to a (\( \pm \))-amphichiral virtual link.
\end{statement}

\begin{statement}[\Cref{4Cor:cbconc}]
	Let \( L \) be a \(2\)-colourable oriented virtual link such that \( 0 \) does not appear in \( J^2 ( L ) \). Then \( L \) is not concordant to a chequerboard colourable virtual link.
\end{statement}
As a classical link is chequerboard colourable, it follows that the \(2\)-colour writhe provides an obstruction to a virtual link being concordant to a classical link.

In \Cref{3Prop:cbwrithe2} we show that the \(2\)-colour writhe of a chequerboard colourable virtual link may be determined from the pairwise linking numbers of the argument link. The \( 2 \)-colour writhe is a strictly stronger invariant than the pairwise linking numbers, however: in \Cref{3Sec:2colourwrithe} we exhibit a virtual link with vanishing linking numbers that is detected by the \(2\)-colour writhe.

Parity projection is a powerful construction with much utility \cite{Manturov2010}, but its application to virtual links has been restricted. The \(2\)-colour parity allows parity projection to be applied to large classes of virtual links. We demonstrate the usefulness of parity projection with respect to the \(2\)-colour parity by proving that minimal classical crossing diagram of a chequerboard colourable virtual link is itself chequerboard colourable.

\begin{statement}[\Cref{3Prop:cbminimal}]
	Let \( L \) be a chequerboard colourable oriented virtual link. A minimal classical crossing diagram of \( L \) is chequerboard colourable.
\end{statement}

\subsubsection*{Comparison with other theories}
The \(2\)-colour parity yields invariants that are strictly stronger than those associated to other extensions of parity to virtual links.

There is a very simple extension of parity that we refer to as the \emph{na\"\i ve parity}. It is defined by declaring all self-crossings of a virtual link diagram as even, and all mixed crossings as odd. The na\"\i ve parity suffers from the defect outlined above: it is unable to distinguish mixed crossings between the same components. It follows that the writhe invariant extracted from the na\"\i ve parity, known as the \emph{na\"\i ve writhe}, is simply the sum of the pairwise linking numbers, as we show in \Cref{5Subsec:naive}. In \Cref{5Thm:2colournwrithe} we show that the \(2\)-colour parity yields a strictly stronger invariant than the na\"\i ve parity (on \(2\)-colourable oriented virtual links).

The pairwise linking numbers between the components of virtual links may be odd, in contrast to those of classical links. Im and Park defined a theory of parity together with an associated writhe invariant \cite{ImPark2013}. As \cite[Figure 7]{ImPark2013} shows, their construction does not satisfy the third parity axiom (as given in \Cref{2Def:parityaxioms}), so that it does not yield a parity on arbitrary virtual links. Nevertheless, their construction does yield a parity on the restricted class of virtual links with even pairwise linking numbers. As we show in \Cref{5Prop:evensubset}, the set of such virtual links is a subset of that of \(2\)-colourable virtual links.

We refer to the resulting parity and writhe invariant of virtual links with even pairwise linking numbers as the \emph{IP parity} and the \emph{IP writhe}, respectively. We say that a classical crossing of a virtual link diagram is \emph{IP-odd} (\emph{IP-even}) if it is odd (even) with respect to the IP parity.

The IP parity is also unable to distinguish two mixed crossings between the same components. Specifically, let \( D_1 \) and \( D_2 \) be components of a virtual link diagram. A mixed crossing between \( D_1 \) and \( D_2 \) is IP-odd if and only if every mixed crossing between \( D_1 \) and \( D_2 \) is IP-odd. It follows that the contribution of mixed crossings to the IP writhe is determined by the pairwise linking numbers of its components. (In fact, there exist virtual links that are detected by the na\"\i ve writhe but not the IP writhe, as we show in \Cref{5Subsec:IP}.)

The \(2\)-colour writhe does not possess this deficiency, and, as stated above, is strictly stronger than the pairwise linking numbers. As a consequence we obtain \Cref{5Thm:2colourstronger}, which proves that the \(2\)-colour parity yields a strictly stronger invariant than the IP parity.

In subsequent work Im, Lee, and Lee \cite{ImLeeLee2014}, and Im, Kim, and Park \cite{ImKimPark2017} use the IP parity to construct polynomial invariants of virtual links with even pairwise linking numbers. In light of \Cref{5Thm:2colourstronger} it is reasonable to suspect that polynomial invariants constructed using the \(2\)-colour writhe will be stronger than these invariants.

As mentioned above, the \(2\)-colour writhe of a chequerboard colourable virtual link may be determined from the pairwise linking numbers. Nevertheless, it is still strictly stronger than both the na\"\i ve writhe and the IP writhe on such links: in \Cref{5Subsec:naive} we exhibit a chequerboard colourable virtual link that is detected by the \( 2 \)-colour writhe but is not detected by both the na\"\i ve writhe and the IP writhe.

\subsubsection*{Free links}
Free links are virtual links modulo classical crossing changes and a move known as flanking \cite{Manturov2010}. Manturov defined a parity theory of free knots (one-component free links), and extended it to free links appearing in a given concordance between free knots, and to \(2\)-component virtual links \cite{Manturov2012,Manturov2016}. The definition of the \(2\)-colour parity may be applied directly to free links.

\begin{statement}[\Cref{5Prop:freelinks} of \Cref{5Sec:comparison}]
	The \(2\)-colour parity descends to a parity on \(2\)-colourable oriented free links.
\end{statement}

The \(2\)-colour parity has the advantage that it does not require any extra concordance information, and can be determined directly from a representative of a free link. It is also defined for free links of an arbitrary number of components.

\subsection*{Plan of the paper}

\Cref{2Sec:definition} contains the definition of the \(2\)-colour parity. It also contains a complete characterisation of \(2\)-colourable virtual links, and the verification that the \(2\)-colour parity reduces to the Gaussian parity on virtual knots.

In \Cref{3Sec:2colourwrithe} we define and investigate the properties of the \(2\)-colour writhe. We also observe that the computational complexity of the \(2\)-colour writhe is quadratic in the number of link components, despite initial appearances. In the case of chequerboard colourable virtual links, we further reduce the complexity by demonstrating that the \(2\)-colour writhe of such links may be determined from the pairwise linking numbers, having first guaranteed that chequerboard links are \(2\)-colourable. (However, the \( 2 \)-colour writhe is stronger than the pairwise linking numbers, in general.)

The pairwise linking numbers are elementary concordance invariants of virtual links. Thus it is natural to ask if the \(2\)-colour writhe is a concordance invariant, in general. We provide an affirmative answer to this question in \Cref{4Sec:concordance}. We utilise a homology theory of virtual links, known as doubled Lee homology, to do so. Combining this with results of the previous sections, we illustrate that the \(2\)-colour writhe carries interesting concordance information.

In \Cref{5Sec:comparison} we compare the \(2\)-colour parity to other parity theories of virtual links. In \Cref{5Subsec:naive,,5Subsec:IP}, we demonstrate that the \(2\)-colour writhe yields a strictly stronger invariant than both the na\"\i ve parity and the IP parity. In \Cref{5Subsec:other} we briefly look at the \(2\)-colour writhe in other related contexts. \Cref{5Subsub:manturov} contains the proof that the \(2\)-colour parity descends to free links. \Cref{5Subsub:affine} compares \(2\)-colourable virtual links to those links for which the affine index polynomial may be defined. Finally, \Cref{5Subsub:xu} identifies a large class of virtual links on which an index polynomial due to Xu vanishes, but the \(2\)-colour writhe does not.

\subsubsection*{Acknowledgements} We thank Hans Boden and Andrew Nicas for their encouragement and numerous helpful conversations and comments. We thank the referee for their comments and careful reading of the paper.

\section{Definition of the \(2\)-colour parity}\label{2Sec:definition}
In this section we define the \emph{\(2\)-colour parity}. In \Cref{2Subsec:2colourings} we define the eponymous \(2\)-colourings of virtual link diagrams, and characterise the virtual links possessing such colourings. In \Cref{2Subsec:2parity} we use these colourings to define the advertised parities of links, before demonstrating in \Cref{2Subsec:gaussian} that the construction reduces to the Gaussian parity on virtual knots.

\subsection{\(2\)-colourings}\label{2Subsec:2colourings}

A \(2\)-colouring of a virtual link diagram is a certain colouring of its \emph{shadow} (the underlying flat diagram).

\begin{definition}[Shadow of a diagram]
	\label{2Def:shadow} Let \( D \) be a virtual link diagram. Denote by \( S ( D ) \) the diagram obtained from \( D \) by removing the decoration at classical crossings; we refer to the resulting double points as \emph{flat crossings}. The diagram \( S ( D ) \) is the \emph{shadow} of \( D \).
	
	Let a \emph{component} of \( S ( D ) \) be an \( S^1 \) immersed in such a way that at a flat or virtual crossing we have exactly one of the following:
	\begin{itemize}
		\item All the incident arcs are contained in the component.
		\item The arcs contained in the component are not adjacent.
		\item None of the arcs are contained in the component.
	\end{itemize}
	Thus the components of \( S ( D ) \) are in bijection with those of \( D \) and we shall not distinguish between the two.
\end{definition}

\begin{definition}[\(2\)-colouring]
	\label{2Def:2-colouring}
	Let \( D \) be a virtual link diagram. A \emph{\(2\)-colouring} of \( S ( D ) \) is a colouring of its arcs exactly one of two colours (we use red and green) such that at every flat crossing we have the following, up to rotation
	\begin{center}
		\includegraphics[scale=1]{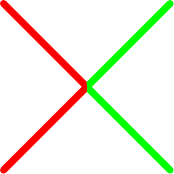}\label{Graphic:pc}
	\end{center}
	A \emph{\(2\)-colouring} of \( D \) is a \(2\)-colouring of \( S ( D ) \).
\end{definition}
Kauffman refers to such colourings as \emph{proper colourings} \cite{Kauffman2004b}.

Given a \(2\)-colouring of a virtual link diagram, we may produce another by flipping the colour configuration on one, or all, of the components.

\begin{definition}
	\label{2Def:dualizing}
	Let \( D \) be a virtual link diagram and \( \mathscr{C} \) a \(2\)-colouring of it. Denote by \( \overline{\mathscr{C}} \) be the \(2\)-colouring obtained from \( \mathscr{C} \) by \emph{global dualizing}: interchanging red and green throughout the diagram. We say that \( \overline{\mathscr{C}} \) is the \emph{global dual} of \( \mathscr{C} \).
	
	Given an arbitrary ordering of the components of \( D \), let \( {\overline{\mathscr{C}}}^i \) denote the \(2\)-colouring obtained by dualizing the colouring on the \(i\)-th component (that this yields a \(2\)-colouring is clear from the figure in \Cref{2Def:2-colouring}).
\end{definition}

A virtual link diagram which possesses a \(2\)-colouring is known as \emph{\(2\)-colourable}. Examples of \(2\)-colourable and non-\(2\)-colourable virtual links are given in \Cref{2Fig:shadowandgauss,2Fig:virtualhopflink}, respectively.

Not all virtual links are \(2\)-colourable. We conclude this section by completely characterising \(2\)-colourability. First, we define a simplified version of the Gauss diagram of a virtual link, and the complementary notion of \(2\)-colourability for such diagrams.

\begin{definition}[Simple Gauss diagram]
	\label{2Def:gauss}
	Let \( D \) be an \( n \)-component virtual link diagram and \( S ( D ) \) its shadow. Denote by \( G ( D ) \) the \emph{simple Gauss diagram of} \( D \), formed as follows:
	\begin{enumerate}
		\item Place \( n \) copies of \( S^1 \) disjoint in the plane. A copy of \( S^1 \) is known as a \emph{core circle} of \( G ( D ) \).
		\item Fix a bijection between the components of \( S ( D ) \) and the core circles of \( G ( D ) \).
		\item Arbitrarily pick a basepoint on each component of \( S ( D ) \) and on the corresponding core circle of \( G ( D ) \).
		\item Pick a component of \( S ( D ) \) and progress from the basepoint around that component (in either direction). When meeting a classical crossing label it and mark that label on the corresponding core circle of \( G ( D ) \) (virtual crossings are ignored). Continue until the basepoint is returned to.
		\item Repeat for all components of \( S ( D ) \); if a crossing is met that already has a label, use it.
		\item Add a chord linking the two incidences of each label. These chords may intersect and have their endpoints on different core circles of \( G ( D ) \).\qedhere
	\end{enumerate}
\end{definition}

\begin{definition}
	\label{2Def:altgausscolour}
	Let \( D \) be a virtual link and \( G ( D ) \) its simple Gauss diagram. The complement of the chord endpoints in the core circles is a disjoint union of intervals. A \emph{\(2\)-colouring} of \( G ( D ) \) is a colouring of these intervals exactly one of two colours such that adjacent intervals have opposite colours.
\end{definition}
Examples of a simple Gauss diagram and a \(2\)-colouring of it are given in \Cref{2Fig:shadowandgauss,2Fig:borroshadow}.

\begin{figure}
	\includegraphics[scale=0.5]{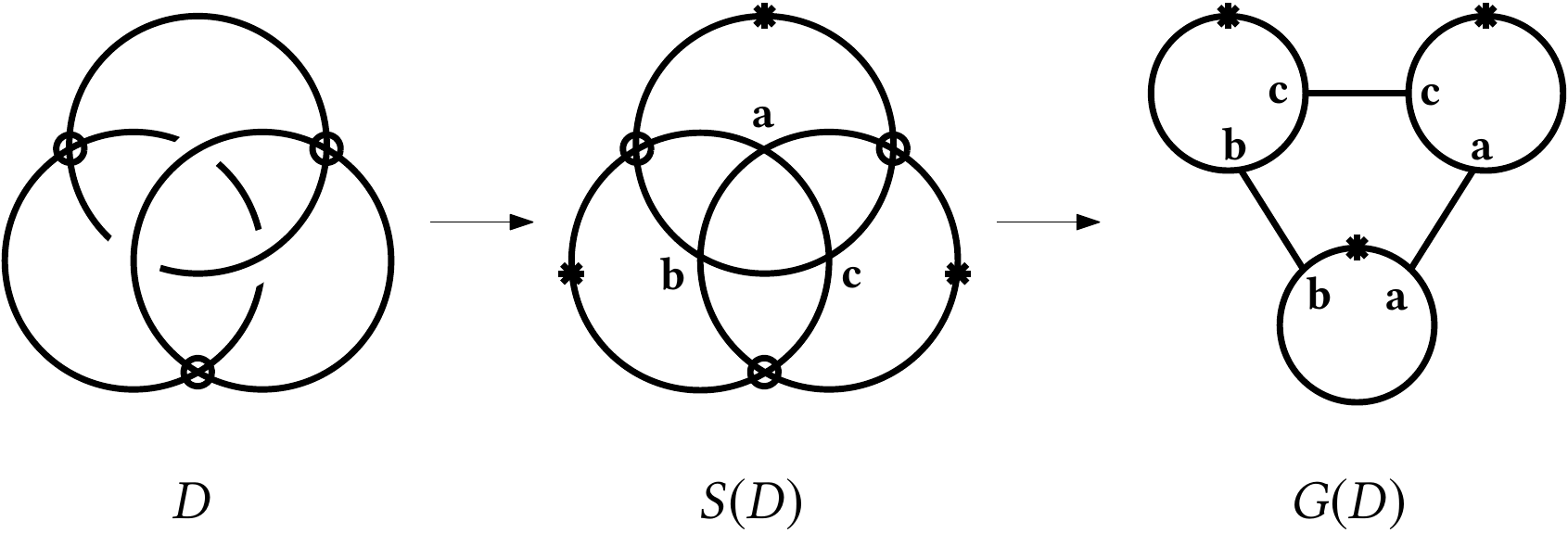}
	\caption{A virtual link diagram, its shadow, and its simple Gauss diagram.}
	\label{2Fig:shadowandgauss}
\end{figure}

\begin{figure}
	\includegraphics[scale=0.5]{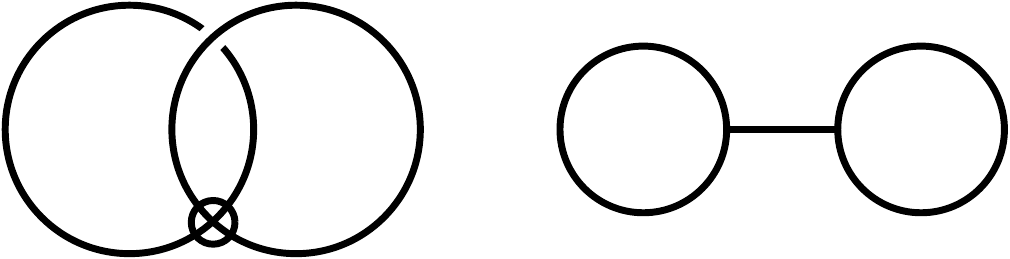}
	\caption{A virtual link that is not \(2\)-colourable, on the left, with its simple Gauss diagram, on the right.}
	\label{2Fig:virtualhopflink}
\end{figure}

The \(2\)-colourability of a virtual link diagram is completely determined by the presence of the following type of core circle in its simple Gauss diagram.

\begin{definition}
	\label{2Def:degen}
	A core circle within a simple Gauss diagram is known as \emph{degenerate} if it contains an odd number of chord endpoints.
\end{definition}

Notice that both of the core circles of the simple Gauss diagram in \Cref{2Fig:virtualhopflink} are degenerate.

The following characterisation of \(2\)-colourability is a quick exercise in combinatorics, the proof of which we include to aid the reader.
\begin{proposition}
	\label{2Thm:bijection}
	Let \( D \) be a diagram of a virtual link \( L \). Then
	\begin{equation}\label{2Eq:bij1}
	\left\lbrace~ 	\begin{matrix}
	2\text{-colourings} \\
	\text{of}~ D
	\end{matrix}
	~ \right\rbrace = \left\lbrace~ \begin{matrix}
	2\text{-colourings} \\
	\text{of}~ G ( D )
	\end{matrix} ~ \right\rbrace
	\end{equation}
	and
	\begin{equation}\label{2Eq:bij2}
	\left| \left\lbrace~ \begin{matrix}
	2\text{-colourings} \\
	\text{of}~ D
	\end{matrix}
	~ \right\rbrace \right| = \left\lbrace \begin{matrix*}[l]
	2^{ | L | }, &\text{if \( G ( D ) \) contains no degenerate circles} \\
	0, &\text{otherwise}
	\end{matrix*}
	\right.
	\end{equation}
	where \( | L | \) denotes the number of components of \( L \).
\end{proposition}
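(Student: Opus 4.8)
The plan is to reduce the whole statement to a single reformulation of what a \(2\)-colouring of \(S(D)\) is, after which both \eqref{2Eq:bij1} and \eqref{2Eq:bij2} fall out by unwinding the construction of \(G(D)\). First I would reinterpret \Cref{2Def:2-colouring}: reading off the local picture there, the condition at a flat crossing is that the four incident arc-ends, in cyclic order, are coloured \(X,X,\bar X,\bar X\) for one of the two colours \(X\); equivalently, along each of the two strands through the crossing the colour changes. Thus a \(2\)-colouring of \(S(D)\) is precisely an assignment of one of the two colours to each arc of \(S(D)\) (the segments of \(S(D)\) running between consecutive classical crossings, virtual crossings imposing no condition) such that, traversing any component, the colour switches at every classical crossing encountered. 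This reformulation is the only place any thought is needed.

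Next, by the construction in \Cref{2Def:gauss}, tracing the components of \(S(D)\) identifies the arcs of \(S(D)\) bijectively with the intervals that the chord endpoints cut out of the core circles of \(G(D)\), respecting the cyclic order along each component / core circle, and independently of the chosen basepoints and directions. Under this identification the rule ``the colour switches at every classical crossing'' is exactly the rule ``adjacent intervals have opposite colours'' of \Cref{2Def:altgausscolour}; hence a colouring of \(S(D)\) is a \(2\)-colouring if and only if the corresponding colouring of \(G(D)\) is one, which is \eqref{2Eq:bij1}.

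It then remains to count the \(2\)-colourings of \(G(D)\). The adjacency relation on intervals only ever relates intervals lying on a common core circle — a chord endpoint bounds exactly the two intervals on either side of it on its own core circle, and chords do not otherwise create adjacencies — so a \(2\)-colouring of \(G(D)\) is a choice, made independently on each core circle, of a colouring of that circle's intervals with neighbours opposite. On a core circle carrying \(m\) chord endpoints, going once around switches the colour \(m\) times, so such a colouring exists iff \(m\) is even, and when it exists it is determined by the colour of a single interval, giving exactly two. Multiplying over the \(n = |L|\) core circles, the number of \(2\)-colourings of \(G(D)\) — hence, by \eqref{2Eq:bij1}, of \(D\) — is \(2^{|L|}\) if every core circle is non-degenerate and \(0\) as soon as some core circle is degenerate, which is \eqref{2Eq:bij2}. (Since the sum of the \(m\) over all core circles is twice the number of classical crossings, the number of degenerate circles is even, so the all-non-degenerate case genuinely occurs.) The only real ``difficulty'' is the local verification in the first paragraph; after that it is pure bookkeeping.
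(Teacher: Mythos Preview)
Your argument is correct and follows essentially the same route as the paper's own proof: identify arcs of \(S(D)\) with intervals on the core circles of \(G(D)\), observe that the local \(2\)-colouring condition at a flat crossing is exactly ``colour changes along each strand'', and then count per core circle. The paper merely gestures at a figure for \eqref{2Eq:bij1} and gives a one-line parity argument for \eqref{2Eq:bij2}; you have simply written out what that gesture encodes, so nothing further is needed.
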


\begin{proof}
	One may readily see the bijection of \Cref{2Eq:bij1} from \Cref{2Fig:borroshadow}. We prove \Cref{2Eq:bij2} as follows. Let \( G ( D ) \) contain a degenerate core circle. On this core circle the number of connected components of the complement of the endpoints is odd, from which we deduce that it cannot be \(2\)-coloured (as the colour must change when passing an endpoint).
	
	There are two possible colour configurations of each non-degenerate core circle, and given a \(2\)-colouring of \( G ( D ) \), flipping the configuration on one core circle yields a new \(2\)-colouring. \Cref{2Eq:bij2} follows from this observation. (For more details see \cite[Theorem \(3.12\)]{Rushworth2017}.)
\end{proof}

\begin{figure}
	\includegraphics[scale=0.5]{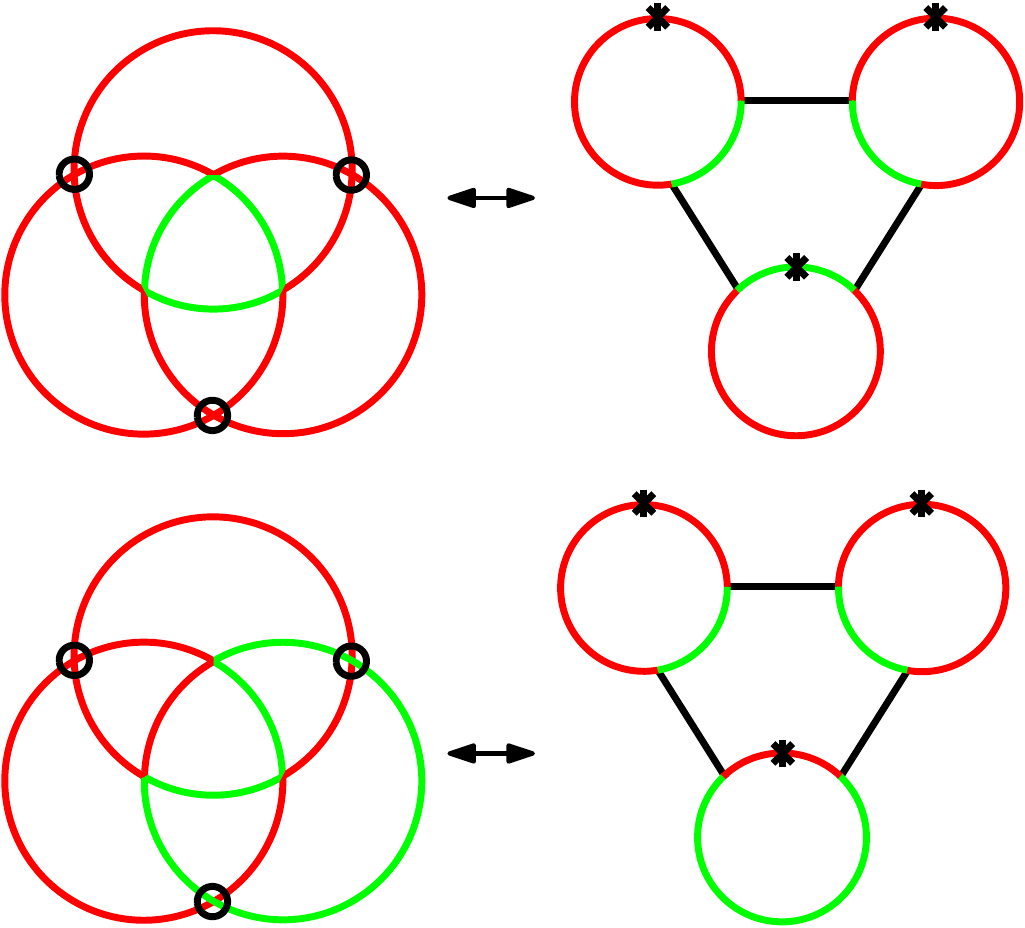}
	\caption{The bijection between \(2\)-colourings of a diagram and those of its flat Gauss diagram.}
	\label{2Fig:borroshadow}
\end{figure}

Note that \Cref{2Thm:bijection} implies that the number of \(2\)-colourings of a virtual link is diagram-independent.

\begin{corollary}\label{2Cor:vknotcolour}
	Virtual knots are \(2\)-colourable.
\end{corollary}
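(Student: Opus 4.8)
The plan is to read this off directly from \Cref{2Thm:bijection}. Let \( K \) be a virtual knot and \( D \) a diagram of it. Since \( K \) has a single component, the simple Gauss diagram \( G ( D ) \) has exactly one core circle, and every chord of \( G ( D ) \) has both of its endpoints on this one circle.

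The only point to check is a parity count of chord endpoints. Each classical crossing of \( D \) contributes exactly one chord to \( G ( D ) \), and hence exactly two endpoints on the single core circle. Writing \( c \) for the number of classical crossings of \( D \), the core circle therefore carries \( 2c \) chord endpoints, an even number. By \Cref{2Def:degen}, the core circle is not degenerate, so \( G ( D ) \) contains no degenerate core circles.

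It then remains only to apply \Cref{2Eq:bij2} of \Cref{2Thm:bijection} with \( | K | = 1 \): the number of \(2\)-colourings of \( D \) equals \( 2^{1} = 2 \), which is in particular non-zero, so \( K \) is \(2\)-colourable. There is no genuine obstacle here — the content is entirely the observation that a lone core circle always meets an even number of chord endpoints, which forces non-degeneracy and hence, via the proposition, the existence of (exactly two) \(2\)-colourings.
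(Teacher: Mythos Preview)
Your proof is correct and follows essentially the same approach as the paper: a virtual knot's simple Gauss diagram has a single core circle, every chord contributes two endpoints to it, so the endpoint count is even and the circle is non-degenerate, whence \Cref{2Thm:bijection} gives \(2^{1}=2\) colourings. The paper's version is terser but the argument is identical.
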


\begin{proof}
	The simple Gauss code of a virtual knot has exactly one core circle. Every chord endpoint must lie on this core circle so that it cannot be degenerate, as every chord has two endpoints.
\end{proof}

In \Cref{3Subsec:cbable} we demonstrate that if a virtual link is chequerboard colourable, then it is \(2\)-colourable. The converse is not true, however. Examples of \(2\)-colourable virtual links that are not chequerboard colourable are given in \Cref{2Fig:knot21,3Fig:example1}.

\subsection{The \(2\)-colour parity}\label{2Subsec:2parity}

We now employ the colourings described in \Cref{2Subsec:2colourings} to define a parity for \(2\)-colourable oriented virtual links. This is done by comparing the \(2\)-colouring to the orientation at classical crossings. First, we state the parity axioms.

\begin{definition}[Parity axioms \cite{Manturov2010}]
	\label{2Def:parityaxioms}
	Let \( \mathcal{C} \) be the category whose objects are virtual link diagrams, and morphisms are sequences of virtual Reidemeister moves. Consider the assignment of a function \( f_D \) to every object \( D \), with domain the set of classical crossings of \( D \) and codomain \( \Z_2 \). We refer to the image of a crossing under \( f_D \) as \emph{the parity} of the crossing; crossings that are mapped to \( 0 \) are \emph{even}, and those mapped to \( 1 \) are \emph{odd}.
	Such an assignment of functions is \emph{a parity} if it satisfies the following axioms:
	\begin{enumerate}[start=0]
		\item If diagrams \( D \) and \( D' \) are related by a single virtual Reidemeister move, then the parities of the crossings that are not involved in this move do not change.
		\item If \( D \) and \( D' \) are related by a Reidemeister I move that eliminates a crossing, then the parity of that crossing is even.
		\item If \( D \) and \( D' \) are related by a Reidemeister II move eliminating the crossings \( c_1 \) and \( c_2 \), then \( f_D ( c_1 ) = f_D ( c_2 ) \).
		\item If \( D \) and \( D' \) are related by a Reidemeister III move involving the crossings \( c_1 \), \( c_2 \) and \( c_3 \), then exactly one of the following holds: \( c_1 \), \( c_2 \) and \( c_3 \) are all odd, or \( c_1 \), \( c_2 \) and \( c_3 \) are all even, or exactly two of \( c_1 \), \( c_2 \) and \( c_3 \) are odd. Further, the parities of \( c_1 \), \( c_2 \) and \( c_3 \) are unchanged in \( D' \).
		\qedhere
	\end{enumerate}
\end{definition}

The above definition is of a \emph{weak} parity; if there are no Reidemeister III moves in which all of the crossings involved are odd, then the assignment is a \emph{strong} parity.

\begin{definition}[\(2\)-colour parity]
	\label{2Def:colourparity}
	Let \( D \) be an oriented virtual link diagram and \( \mathscr{C} \) a \(2\)-colouring of it. Let \( \cp_{\mathscr{C}} \) be the function from the set of classical crossings of \( D \) to \( \Z_2 \) defined as follows\footnote{we have suppressed the subscript \(D\) of \Cref{2Def:parityaxioms}}. At each classical crossing of \( D \) one may compare the orientation to the colouring of the associated flat crossing of \( S ( D ) \). The possible configurations and their image under \( \cp_{\mathscr{C}} \) are
	\begin{equation}\label{2Eq:oddeven}
		\begin{aligned}
			&\cp_{\mathscr{C}} \left( \raisebox{-11pt}{\includegraphics[scale=0.5]{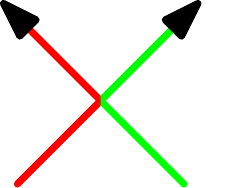}} \!\!\! \right) = \cp_{\mathscr{C}} \left(\!\!\! \raisebox{-11pt}{\includegraphics[scale=0.5]{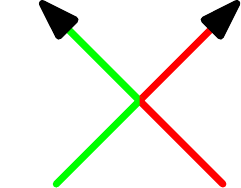}} \right) = 0 \\
			&\cp_{\mathscr{C}} \left( \raisebox{-11pt}{\includegraphics[scale=0.5]{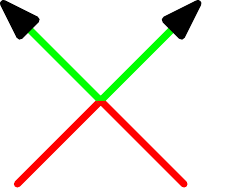}}\!\!\! \right) = \cp_{\mathscr{C}} \left( \!\!\! \raisebox{-11pt}{\includegraphics[scale=0.5]{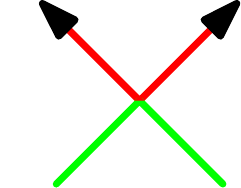}} \right) = 1
		\end{aligned}
	\end{equation}
\end{definition}

\begin{proposition}
	\label{2Prop:colourparity}
	The function \( \cp_{\mathscr{C}} \) is a strong parity (on the category of \(2\)-colourable oriented virtual link diagrams), known as the \emph{\(2\)-colour parity}.
\end{proposition}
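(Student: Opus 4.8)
The plan is to verify the axioms of \Cref{2Def:parityaxioms} one at a time, using the local picture of \Cref{2Eq:oddeven} together with the fact (\Cref{2Thm:bijection}) that a \(2\)-colouring of \( D \) is the same data as a \(2\)-colouring of the shadow \( S ( D ) \), i.e. a checkerboard-type colouring of the arcs between flat crossings. The key observation driving everything is that the assignment \( \cp_{\mathscr{C}} \) is \emph{local}: the parity of a classical crossing depends only on the colours of the four arcs meeting at the corresponding flat crossing and on the orientation there. So Axiom~0 (stability of uninvolved crossings under a virtual Reidemeister move) is immediate, since virtual moves do not alter the shadow near classical crossings not involved in the move, and hence do not change the colours or orientations recorded there; one should also note that a virtual move does not change the \(2\)-colouring of \( G(D) \), so \( \mathscr{C} \) transports canonically.

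For Axiom~1, a Reidemeister~I move creates (or destroys) a classical crossing at which the two outgoing strands belong to the same arc of the shadow locally; tracing through \Cref{2Def:2-colouring}, the kink forces the two strands through the flat crossing to carry a colour pattern that, compared against the orientation, falls into one of the two \emph{even} configurations of \Cref{2Eq:oddeven}. I would simply draw the shadow of an R1 kink, apply the allowed colouring at the flat crossing, and read off parity \(0\). For Axiom~2, an R2 move involves two classical crossings sharing a pair of parallel strands; the two flat crossings of the shadow are related by a flat R2 move, and the colouring forces the colour pattern at one to be carried to the other (the colour of an arc is constant along it). Checking the four orientation cases (parallel vs antiparallel strands, and the two colour patterns) shows the two crossings land in the same row of \Cref{2Eq:oddeven}, giving \( \cp_{\mathscr{C}}(c_1) = \cp_{\mathscr{C}}(c_2) \).

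Axiom~3 is the main obstacle and the step requiring real care. One has three strands forming a flat R3 triangle in the shadow; the \(2\)-colouring assigns colours to the six arcs entering the triangle, subject to the constraint at each of the three flat crossings, and the orientations of the three strands are fixed. I would enumerate the admissible colourings of the triangle (there are only a few, up to global dualizing and the rotational symmetry of \Cref{2Def:2-colouring}) and, for each, compute the triple of parities \( (\cp_{\mathscr{C}}(c_1), \cp_{\mathscr{C}}(c_2), \cp_{\mathscr{C}}(c_3)) \) via \Cref{2Eq:oddeven}, checking in every case that the number of odd crossings is \(0\), \(2\), or \(3\) — and moreover that \(3\) never occurs, which is what upgrades ``parity'' to ``strong parity''. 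Because parity in \(\Z_2\) is what matters, the cleanest route is to assign to each arc a value in \(\Z_2\) recording its colour (say green \(=0\), red \(=1\)) and to each oriented strand-end a sign, reformulate \Cref{2Eq:oddeven} as ``\(c\) is odd iff [colour difference across the crossing] \(\ne\) [some fixed function of the local orientation]'', and then observe that summing this relation over the three crossings of the R3 triangle makes the colour contributions telescope (each of the three ``internal'' arcs of the triangle appears in exactly two of the three crossings), leaving a quantity determined purely by the orientations of the three strands — and that quantity is always even, never \(3\). Invariance of the three parities across the R3 move is then automatic from locality, since the six external arcs and their colours, and the three strand orientations, are unchanged. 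I would present the orientation bookkeeping as a short case check rather than a formula, since there are only finitely many oriented R3 configurations to inspect.
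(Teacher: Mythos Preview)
Your proposal is correct and follows essentially the same approach as the paper: a local case analysis using the configurations of \Cref{2Eq:oddeven}, with symmetry (colour dualizing, orientation reversal on individual strands) used to cut down the number of cases. The paper's proof is exactly this, carried out via two figures for Axioms~(2) and~(3) and a one-line remark for Axioms~(0) and~(1). Your suggested \(\Z_2\)-telescoping reformulation for Axiom~(3) is a nice organizational device and does work (the parity at a crossing can be written as \(1 + c_1 + c_2 \pmod 2\) with \(c_i\) the colour of strand \(i\)'s incoming arc, and summing over the triangle gives an even count), though the paper does not pursue it.

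One step to tighten: your claim that invariance of the three individual parities across the R3 move is ``automatic from locality, since the six external arcs and their colours, and the three strand orientations, are unchanged'' is not quite enough as stated. Under R3 each strand's two crossings swap order, so at each crossing the internal arc (colour \(\alpha+1\)) and the adjacent external arc (colour \(\alpha\)) trade roles as the incoming arc --- the local colour pattern genuinely changes. What rescues you is that this swap happens on \emph{both} strands at each crossing simultaneously, and the assignment in \Cref{2Eq:oddeven} is visibly invariant under dualizing the colours on both strands at once. That observation (or simply the direct case check you also propose, which is what the paper does) is what closes the argument.
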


\begin{proof}
	Axioms (\(0\)) and (\(1\)) of \Cref{2Def:parityaxioms} are easily verified. The verifications of Axioms (\( 2 \)) and (\( 3 \)) are contained in \Cref{2Fig:r2check,2Fig:r3check}; all possible colour-orientation configurations can be obtained from those depicted by reversing the orientation or dualizing the colouring on individual components.
\end{proof}

\begin{figure}
	\includegraphics[scale=0.65]{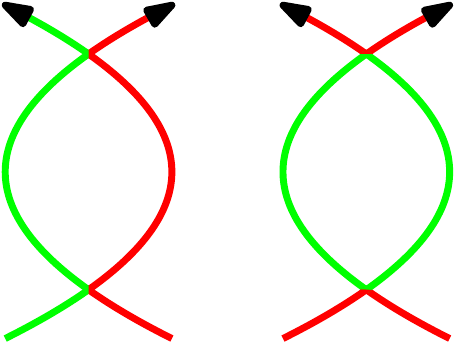}
	\caption{Verifying that the \(2\)-colour parity satisfies Axiom (\( 2 \)).}
	\label{2Fig:r2check}
\end{figure}

\begin{figure}
	\includegraphics[scale=0.65]{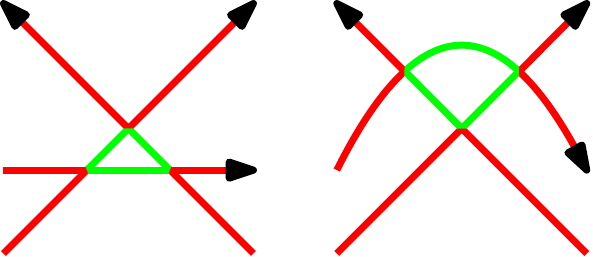}
	\caption{Verifying that the \(2\)-colour parity satisfies Axiom (\( 3 \)).}
	\label{2Fig:r3check}
\end{figure}

An advantage of the \(2\)-colour parity over other extensions of parity to virtual links is that its definition does not discriminate between and self- and mixed crossings. Despite this, self-crossings and mixed crossings behave differently: the parity of self-crossings is independent of the \(2\)-colouring used to compute it, while that of mixed crossings depends on the \(2\)-colouring.

\begin{proposition}
	\label{2Prop:selfcrossings}
	Let \( D \) be a \(2\)-colourable oriented virtual link diagram. If \( c \) is a self-crossing of \( D \) then \( \cp_{\mathscr{C}} ( c ) = \cp_{\mathscr{C}'} ( c ) \), for \( \mathscr{C} \) and \( \mathscr{C}' \) any \(2\)-colourings of \( D \).
\end{proposition}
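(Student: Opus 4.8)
The plan is to leverage the structure of the set of all \(2\)-colourings of \(D\) given by \Cref{2Thm:bijection}: since \(D\) is \(2\)-colourable it has exactly \(2^{|L|}\) distinct \(2\)-colourings, and (from the proof of that proposition) they form a single orbit under the operations \(\mathscr{C} \mapsto \overline{\mathscr{C}}^{i}\) of dualizing the colour on a single component. Consequently it is enough to fix an index \(j\) and show that \(\cp_{\mathscr{C}}(c) = \cp_{\overline{\mathscr{C}}^{j}}(c)\) for every self-crossing \(c\); the case of arbitrary \(\mathscr{C}, \mathscr{C}'\) then follows by iterating over the (sub)set of components on which \(\mathscr{C}\) and \(\mathscr{C}'\) differ.

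The first ingredient I would record is an elementary observation read off from \Cref{2Eq:oddeven}: the value \(\cp_{\mathscr{C}}(c)\) depends only on the orientations of the two strands through \(c\) together with the colours of the four arcs of \(S(D)\) incident to the flat crossing underlying \(c\), and the assignment of \Cref{2Eq:oddeven} is invariant under interchanging red and green at all four of these arcs at once — the even pictures stay even and the odd pictures stay odd. Equivalently, \(\cp_{\overline{\mathscr{C}}} = \cp_{\mathscr{C}}\) as functions on the whole crossing set, where \(\overline{\mathscr{C}}\) is the global dual of \Cref{2Def:dualizing}; this is exactly the insensitivity to colour dualizing already noted in the introduction, and it is immediate from the defining figures.

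Now let \(c\) be a self-crossing lying on the component \(D_i\). If \(j \neq i\), then none of the four arcs incident to \(c\) lies on \(D_j\) (because \(c\) is a self-crossing, the only component passing through it is \(D_i\)), so \(\overline{\mathscr{C}}^{j}\) and \(\mathscr{C}\) assign identical colours in a neighbourhood of \(c\), whence \(\cp_{\overline{\mathscr{C}}^{j}}(c) = \cp_{\mathscr{C}}(c)\). If \(j = i\), then all four arcs incident to \(c\) lie on \(D_i\), so \(\overline{\mathscr{C}}^{i}\) flips the colour of each of them; thus near \(c\) the colouring \(\overline{\mathscr{C}}^{i}\) looks exactly like the global dual \(\overline{\mathscr{C}}\), and by the previous paragraph \(\cp_{\overline{\mathscr{C}}^{i}}(c) = \cp_{\overline{\mathscr{C}}}(c) = \cp_{\mathscr{C}}(c)\). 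This gives the desired equality in both cases and completes the argument.

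I do not expect any serious obstacle here; the construction is genuinely local at \(c\) and the proof is essentially the remark that dualizing on \(D_i\) acts as a global colour swap in the picture at any self-crossing of \(D_i\). The one point deserving a line of care is the degenerate case in which two of the four arcs incident to \(c\) coincide — that is, \(c\) sits on a Reidemeister I kink of \(D_i\); there one simply notes that dualizing \(D_i\) still flips that repeated arc at both of its appearances, so the local picture at \(c\) is once again that of the global dual, and the argument goes through unchanged. (By contrast, a parallel statement fails for mixed crossings, since dualizing exactly one of the two components through such a crossing flips only two of the four incident arcs; this is what will make the \(2\)-colour writhe a genuine invariant in \Cref{3Sec:2colourwrithe}.)
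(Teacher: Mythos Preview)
Your argument is correct and is essentially the paper's own proof: you reduce to dualizing on a single component, then observe that at a self-crossing of \(D_i\) either none of the incident arcs change (if \(j\neq i\)) or all of them do (if \(j=i\)), and in the latter case \Cref{2Eq:oddeven} shows the parity is unchanged. The paper phrases this as passing from \(\mathscr{C}\) to \(\mathscr{C}'\) by dualizing on a set \(\Lambda\) of components and splitting into the cases \(D_i\in\Lambda\) and \(D_i\notin\Lambda\), which amounts to the same thing.
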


\begin{proof}
	The \(2\)-colouring \( \mathscr{C} \) may be transformed into \( \mathscr{C}' \) by dualizing on a set of components of \( D \), denoted \( \Lambda \). Let \( c \) be a self-crossing of the component \( D_i \). If \( D_i \in \Lambda \) then the colouring is dualized on both arcs involved in \( c \). It is clear from \Cref{2Eq:oddeven} that dualizing on both arcs does not change the parity of the crossing, so that \( \cp_{\mathscr{C}} ( c ) = \cp_{\mathscr{C}'} ( c ) \).
	
	If \( D_i \notin \Lambda \) then the colouring of the arcs involved in \( c \) is unchanged, and \( \cp_{\mathscr{C}} ( c ) = \cp_{\mathscr{C}'} ( c ) \).
\end{proof}

Although the parity of a mixed crossing depends on the \(2\)-colouring, in general, it is readily observed from \Cref{2Eq:oddeven} that the parities defined by a \(2\)-colouring and its global dual are equivalent i.e.\ that they take the same value on every classical crossing (both self and mixed). Therefore a virtual link \( L \) has at most \( 2^{|L|-1} \) inequivalent parities. We use this observation frequently, via the following definition.

\begin{definition}
	\label{2Def:generatingset}
	Let \( D \) be a diagram of an oriented virtual link \( L \). From every pair of \(2\)-colourings of \( D \), \( \left(  \mathscr{C}, \mathscr{C}' \right) \), such that \( \mathscr{C}' = \overline{\mathscr{C}} \), pick one. Let
	\begin{equation*}
	\lbrace \mathscr{C}_1, \mathscr{C}_2, \ldots, \mathscr{C}_{2^{|L|-1}} \rbrace
	\end{equation*}
	be the resulting set of \(2\)-colourings. Such a set is known as a \emph{generating set of \(2\)-colourings of \(D\)}.
\end{definition}

\subsection{Relationship to the Gaussian parity}\label{2Subsec:gaussian}

\Cref{2Thm:bijection} implies that a virtual knot has exactly two \(2\)-colourings. These \(2\)-colourings are global duals of one another, however, and owing to the fact that a virtual knot has a unique orientation up to reversal, there is a unique \(2\)-colour parity for virtual knots. In this section we verify that this parity recovers the Gaussian parity.

\begin{definition}[Gaussian parity \cite{Manturov2010}]
	Let \( D \) be a virtual knot diagram. A classical crossing \( c \) of \( D \) is \emph{G-even} if one passes an even number of chord endpoints when travelling between the two endpoints of the chord associated to \( c \) (in either direction). A crossing that is not G-even is \emph{G-odd}. This declaration defines a parity, known as the \emph{Gaussian parity}.
\end{definition}
For example, both of the classical crossings of the virtual knot depicted in \Cref{2Fig:knot21} are G-odd. Note that the Gaussian parity does not depend on the orientation of the virtual knot. (The Gaussian parity can also be expressed in terms of quandles \cite{Manturov2017}.)

\begin{figure}
	\includegraphics[scale=0.75]{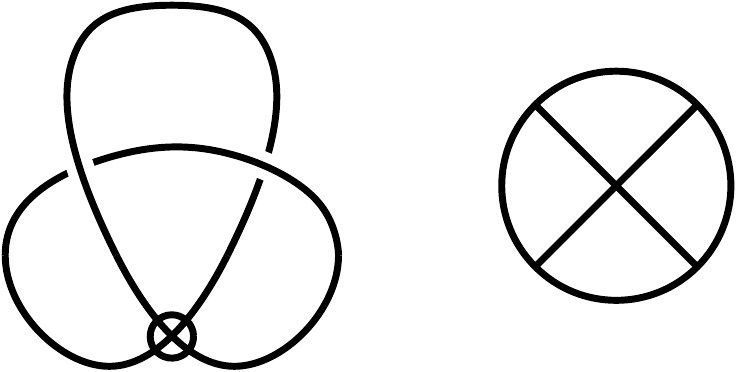}
	\caption{The virtual knot \(2.1\), on the left, and its simple Gauss code, on the right.}
	\label{2Fig:knot21}
\end{figure}

The following is a reformulation of \cite[Proposition \(4.11\)]{Rushworth2017}.

\begin{proposition}
	\label{2Prop:Gparityrelation}
	The \(2\)-colour parity is equivalent to the Gaussian parity on virtual knots.
\end{proposition}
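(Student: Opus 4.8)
The plan is to show that for a virtual knot diagram $D$, a classical crossing $c$ is odd with respect to the $2$-colour parity $\cp_{\mathscr{C}}$ (for the essentially unique $2$-colouring $\mathscr{C}$) if and only if $c$ is G-odd. Since a virtual knot has a single core circle in its simple Gauss diagram $G(D)$, both notions of parity can be read off directly from $G(D)$, and I would work entirely on the level of the Gauss diagram, invoking \Cref{2Thm:bijection} to identify $2$-colourings of $D$ with $2$-colourings of $G(D)$ in the sense of \Cref{2Def:altgausscolour}.

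First I would fix the chord of $G(D)$ associated to $c$, with its two endpoints $p$ and $q$ dividing the core circle into two arcs $\alpha$ and $\beta$. The crossing $c$ is G-odd precisely when each of $\alpha$ and $\beta$ contains an odd number of chord endpoints (equivalently, when the number of chords with exactly one endpoint in $\alpha$ is odd). Next I would observe how the $2$-colouring of $G(D)$ behaves along the core circle: the colour of the intervals alternates at each endpoint, so the colours of the two intervals immediately adjacent to $p$ and the two immediately adjacent to $q$ are determined by the parity of the number of endpoints strictly between them along each arc. Concretely, travelling along $\alpha$ from $p$ to $q$, the colour flips once per endpoint passed; so the colour configuration at $p$ relative to that at $q$ records exactly the parity counted by the Gaussian parity.

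Then I would translate the local picture of \Cref{2Eq:oddeven} into this data. At the flat crossing underlying $c$, the four incident arcs of $S(D)$ correspond to the four intervals of the core circle adjacent to $p$ and $q$; the $2$-colouring assigns these colours, and the orientation of the virtual knot picks out how the two strands pass through. Comparing the colour pattern at $p$ with that at $q$ — i.e. whether the "red/green" assignment is the same on the two sides or swapped — is precisely the distinction drawn in \Cref{2Eq:oddeven} between the even configurations (top row) and the odd configurations (bottom row), once one accounts for orientation; reversing the orientation or globally dualizing the colouring does not change which row we land in, consistent with the Gaussian parity being orientation-independent and the $2$-colour parity being global-dual-invariant. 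Carrying out this comparison carefully shows $\cp_{\mathscr{C}}(c) = 1$ iff the colours at $p$ and $q$ are "out of phase," iff an odd number of endpoints separate them along each arc, iff $c$ is G-odd.

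The main obstacle I anticipate is the bookkeeping in the last step: one has to be scrupulous about conventions — the direction of travel along the core circle, which strand is the over/under (irrelevant for the shadow but relevant for matching to a specific picture), and the precise correspondence between the four local colours in the graphics of \Cref{2Def:2-colouring} and \Cref{2Eq:oddeven} and the four intervals adjacent to $p,q$ in $G(D)$. Since this is "a reformulation of \cite[Proposition 4.11]{Rushworth2017}," I would keep the argument brief: set up the dictionary between the local flat-crossing colour/orientation data and the parity of endpoint-counts on the core-circle arcs, check it against one representative configuration from each row of \Cref{2Eq:oddeven}, and then note that all remaining configurations follow by orientation reversal and global dualizing, both of which preserve both parities.
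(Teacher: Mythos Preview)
Your proposal is correct and is essentially the paper's argument, just phrased on the Gauss-diagram side of the bijection of \Cref{2Thm:bijection} rather than directly on the knot diagram. The paper traverses from an outgoing arc of \(c\) to the matching incoming arc and counts colour flips at each crossing passed; your version does the same count along the core circle between the two chord endpoints, which is the identical computation in different language.
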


\begin{proof}	
	We show that a classical crossing of a virtual knot diagram \( D \) is G-odd if and only if it is odd with respect to the \(2\)-colour parity.
	
	\noindent(\( \Rightarrow \)): Let \(c\) denote a G-odd classical crossing of \(D\). Leaving the crossing from either of the outgoing arcs we must return to a specified incoming arc. Between leaving and returning we have passed through an odd number of classical crossings (that are not \(c\)). Thus the incoming arc must be coloured the opposite colour to the outgoing, and \(c\) must be as follows (up to dualizing)
	\begin{center}
		\includegraphics{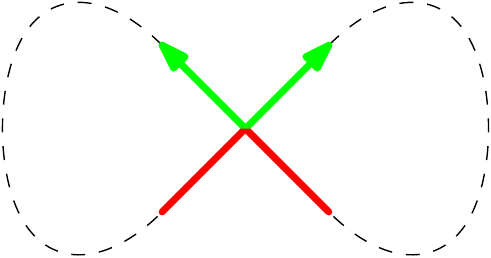}
	\end{center}
	
	\noindent(\( \Leftarrow \)): Let \(c\) denote a classical crossing of \(D\) that is odd with respect to the \(2\)-colour parity. Then the colouring at \(c\) must be as depicted above (up to dualizing). Again, leaving \(c\) from either outgoing arc and returning at the specified incoming arc, we see that, as the colours of the arcs are opposite, an odd number of classical crossings must have been passed, and \(c\) is G-odd.
\end{proof}
In light of this relationship one may interpret the \(2\)-colour parity as an extension of the Gaussian parity to \(2\)-colourable virtual links. In \Cref{4Sec:concordance} we further justify this, by showing that the \(2\)-colour parity yields a concordance invariant, replicating the concordance invariance of the odd writhe of virtual knots.

\section{The \(2\)-colour writhe}\label{3Sec:2colourwrithe}

Any parity naturally defines an integer-valued invariant of virtual knots, via a signed count of the odd crossings. In this section we use the \(2\)-colour parity to define a similar invariant of \(2\)-colourable virtual links. Rather than a single integer, however, we use a generating set of \(2\)-colourings to obtain a string of integers (the length of which depends on the number of components of the link), known as the \emph{\(2\)-colour writhe}.

\Cref{3Subsec:2writhedefinition} contains the definition of the invariant. Initially, the computational complexity of the \(2\)-colour writhe appears to be exponential in the number of components of the virtual link; in \Cref{3Subsec:complexity} we reduce this to quadratic dependence.

In \Cref{3Subsec:cbable} we demonstrate that the \(2\)-colour writhe of a chequerboard colourable virtual link may be determined from the pairwise linking numbers of its components. (Nevertheless, it remains strictly stronger than the writhe invariants associated to the na\"\i ve parity and the IP parity on such links.)

\subsection{Definition}\label{3Subsec:2writhedefinition}

The definition of the \(2\)-colour writhe follows that of the odd writhe, given in \cite{Kauffman2004b} (we demonstrate in \Cref{3Prop:oddwrithe} that it reduces to the odd writhe on virtual knots, in fact). Each \(2\)-colouring of a virtual link diagram defines a writhe, and we declare the string of all such writhes to be the \(2\)-colour writhe of the virtual link represented.

\begin{definition}[\(2\)-colour writhe]
	\label{3Def:2colourwrithe}
	Let \( D \) be a diagram of an oriented virtual link \( L \), and
		\begin{equation*}
			\lbrace \mathscr{C}_1, \mathscr{C}_2, \ldots, \mathscr{C}_{2^{|L|-1}} \rbrace
		\end{equation*}
	a generating set of \(2\)-colourings. Given a \(2\)-colouring \( \mathscr{C}_i \) define the quantity \( J_{\mathscr{C}_i} ( D ) \) as
		\begin{equation}\label{3Eq:writhe}
			J_{\mathscr{C}_i} ( D ) \coloneqq \sum_{\cp_{\mathscr{C}_i} ( c ) = 1 } \text{sign} ( c ).
		\end{equation}
	That is, it is the sum of the signs of the odd crossings (with respect to the parity \( \cp_{\mathscr{C}_i} \)).
	
	Define the \emph{\(2\)-colour writhe of \( D \)} to be
		\begin{equation*}
			J^2 ( D ) \coloneqq \left( J_{\mathscr{C}_1} ( D ), \ldots, J_{\mathscr{C}_{2^{|L|-1}}} ( D ) \right) \in \Z^{2^{|L|-1}}
		\end{equation*}
	considered up to permutation of the entries. That \( J^2 ( D ) \) is independent of the choice of generating set is clear from the observation, made earlier, that the parities associated to globally dual \(2\)-colourings are equivalent. It follows from the parity axioms that the quantities \( J_{\mathscr{C}_i} ( D ) \) are invariant under the virtual Reidemeister moves, and we may define \emph{\(2\)-colour writhe of \( L \)} as \( J^2 ( L ) \coloneqq J^2 ( D ) \).
\end{definition}

\begin{figure}
	\includegraphics[scale=0.65]{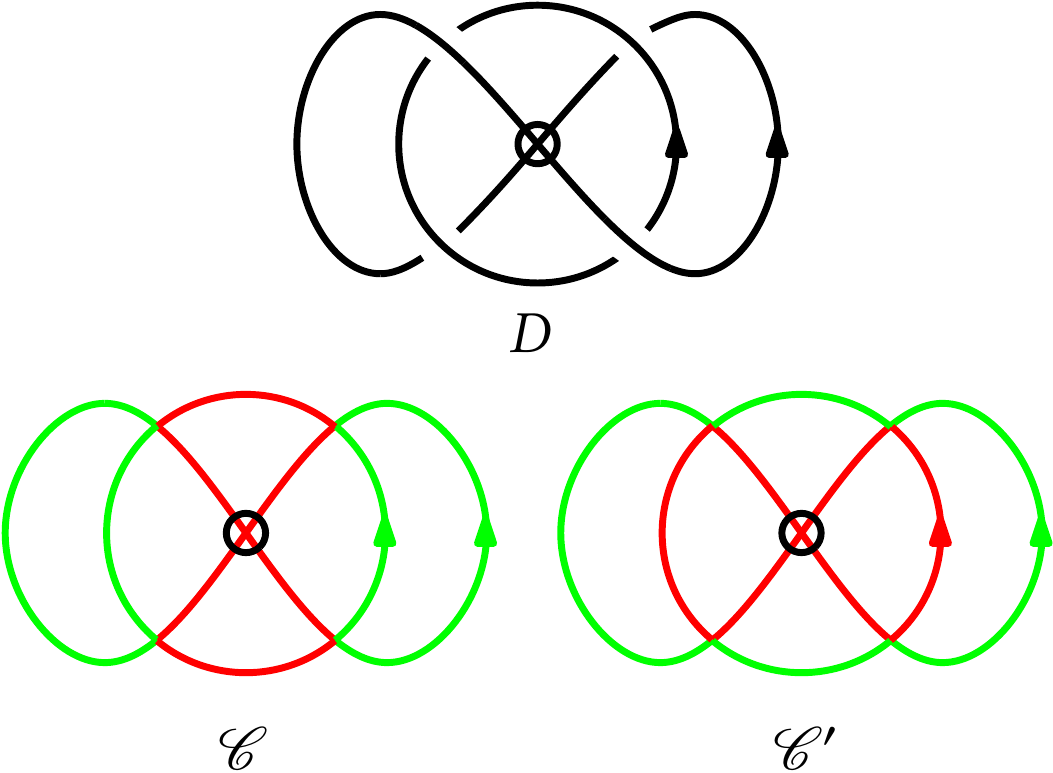}
	\caption{A two-component oriented virtual link diagram and a generating set of \(2\)-colourings.}
	\label{3Fig:example1}
\end{figure}

\begin{example}\label{3Ex:example1}
	Let \( L \) be the two-component oriented virtual link whose diagram \( D \) appears in \Cref{3Fig:example1}: \( \lbrace \mathscr{C}, \mathscr{C}' \rbrace \) is a generating set of \(2\)-colourings, and \( J_{\mathscr{C}} ( D ) = 2 \), \( J_{\mathscr{C}'} ( D ) = -2 \), so that \( J^2 ( L ) = ( 2, -2 ) \).
\end{example}

Notice that \Cref{2Prop:selfcrossings} allows us to define the following restricted invariant.

\begin{definition}
	\label{3Def:2colourselfwrithe}
	Let \( D \) be a diagram of a \(2\)-colourable oriented virtual link \( L \), and \( \mathscr{C} \) a \(2\)-colouring of \( D \). Define the quantity
	\begin{equation*}
		J^2_S ( D ) \coloneqq \sum_{\substack{c ~\text{self-crossing} \\ \cp_{\mathscr{C}} ( c ) = 1 }} \text{sign} ( c )
	\end{equation*}
	That is, \( J^2_S ( D ) \) is the sum of the self-crossings of \( D \) that are odd with respect to \( \cp_{\mathscr{C}} \). It follows from \Cref{2Prop:selfcrossings} that \( J^2_S ( D ) \) is well-defined. Its invariance under the virtual Reidemeister moves follows from the fact that the \(2\)-colour parity satisfies the parity axioms, and we may define \( J^2_S ( L ) \coloneqq J^2_S ( D ) \).
\end{definition}
Note that \( J^2_{S} ( L ) \) is a single integer, while \( J^2 ( L ) \) is a string of integers.

The remainder of this work is concerned with determining a number of properties of the \(2\)-colour writhe and putting them to use: we show that it is an obstruction to (\( \pm \))-amphichirality, chequerboard colourability (and hence classicality), and that is a concordance invariant (so that it obstructs sliceness, for example).

First, we verify that the \(2\)-colour writhe reduces to the odd writhe on virtual knots.

\begin{proposition}\label{3Prop:oddwrithe}
	Let \( K \) be a virtual knot. Then \( J^2 ( K ) = J ( K ) \) (for \( J ( K ) \) the odd writhe of \( K \)).
\end{proposition}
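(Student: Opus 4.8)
The plan is to unwind the definitions on both sides and observe that they coincide. By \Cref{2Cor:vknotcolour} a virtual knot $K$ is $2$-colourable, and by \Cref{2Thm:bijection} it has exactly two $2$-colourings, which by construction are global duals of one another; hence a generating set of $2$-colourings is a singleton $\lbrace \mathscr{C} \rbrace$, and $J^2(K) = J_{\mathscr{C}}(D)$ is a single integer rather than a genuine string. So the statement $J^2(K) = J(K)$ is an honest equality of integers, and it suffices to compare $J_{\mathscr{C}}(D)$ with the odd writhe $J(D)$ for any diagram $D$ of $K$.

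First I would recall that the odd writhe is defined (following \cite{Kauffman2004b}) as the signed count of the crossings of $D$ that are odd with respect to the Gaussian parity, i.e. $J(D) = \sum_{c\ \text{G-odd}} \mathrm{sign}(c)$. On the other side, \Cref{3Def:2colourwrithe} gives $J_{\mathscr{C}}(D) = \sum_{\cp_{\mathscr{C}}(c) = 1} \mathrm{sign}(c)$, a signed count of the crossings odd with respect to the $2$-colour parity $\cp_{\mathscr{C}}$. The key input is \Cref{2Prop:Gparityrelation}, which asserts precisely that on virtual knots the $2$-colour parity is equivalent to the Gaussian parity — that is, a crossing $c$ satisfies $\cp_{\mathscr{C}}(c) = 1$ if and only if $c$ is G-odd. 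Therefore the two index sets of the sums above coincide crossing-by-crossing, and the sums are equal. This gives $J_{\mathscr{C}}(D) = J(D)$ for every diagram $D$, hence $J^2(K) = J(K)$.

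There is essentially no obstacle here; the content of the proposition has already been placed in \Cref{2Prop:Gparityrelation}, and the only things to check are bookkeeping: that a generating set has a single element for a knot (so $J^2$ is a single integer and the ``considered up to permutation'' clause is vacuous), that both invariants are signed counts over the same notion of sign of a classical crossing, and that the equivalence of parities supplied by \Cref{2Prop:Gparityrelation} matches odd crossings with odd crossings rather than merely giving equivalent parities in some weaker sense. One should also remark that both $J^2(K)$ and $J(K)$ are diagram-independent — the former by \Cref{3Def:2colourwrithe} and the latter as part of the definition of the odd writhe — so it is legitimate to verify the equality on any single chosen diagram. I would write the proof as a couple of sentences invoking \Cref{2Cor:vknotcolour}, \Cref{2Thm:bijection}, and \Cref{2Prop:Gparityrelation} in that order, concluding with the equality of the two signed sums.
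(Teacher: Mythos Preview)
Your proposal is correct and follows essentially the same approach as the paper: invoke \Cref{2Thm:bijection} to see that a virtual knot has a singleton generating set of \(2\)-colourings, then use \Cref{2Prop:Gparityrelation} to identify the \(2\)-colour parity with the Gaussian parity, so that the two signed sums agree. The paper's proof is a two-sentence version of exactly this argument.
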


\begin{proof}
	By \Cref{2Thm:bijection} a virtual knot has two \(2\)-colourings, and picking either one of them yields a generating set. As demonstrated in \Cref{2Prop:Gparityrelation}, the \(2\)-colour parity associated to either \(2\)-colouring is equivalent to the Gaussian parity, from which the proposition follows.
\end{proof}

Next, we demonstrate that the \(2\)-colour writhe is an obstruction to (\( \pm \))-amphichirality. There are two distinct types of mirror image in virtual knot theory. For the purposes of this article we shall not distinguish between them, regarding a virtual link as amphichiral if either mirror image preserves the link type, as in the following definition. The prefix \( (\pm) \) denotes if the mirror image preserves or reverses orientation.
\begin{definition}[Mirror image, amphichirality] Let \( D \) be an unoriented virtual link diagram. The \emph{vertical mirror image} of \( D \) is obtained by performing a crossing change at every classical crossing of \( D \). The \emph{horizontal mirror image} of \( D \) is obtained by considering \( D \) as lying to one side of the \( x = 0 \) axis in the plane, and reflecting about this axis. By \emph{a mirror image} of \( D \) we mean either the vertical or horizontal mirror image.

Given an orientation of \( D \), a \emph{positive mirror image} of \( D \) is obtained by constructing a mirror image of \( D \) and preserving the orientation on the components of \( D \). A \emph{negative mirror image} of \( D \) is obtained by constructing a mirror image of \( D \) and reversing the orientation on the components of \( D \).

Let \( D \) represent the virtual link \( L \). We say that \( L \) is \emph{\((+)\)-amphichiral} if a positive mirror image of \( D \) also represents \( L \). We say that \( L \) is \emph{\((-)\)-amphichiral} if a negative mirror image of \( D \) also represents \( L \). Finally, we say that \( L \) is \emph{(\( \pm \))-amphichiral} if it is \((+)\)-amphichiral or \((-)\)-amphichiral.
\end{definition} 

Given a \(2\)-colourable oriented virtual link \( L \), denote by \( -J^2 ( L ) \) the string obtained by multiplying the entries of \( J^2 ( L ) \) by \( -1 \).

\begin{proposition}\label{3Prop:amphichiral}
	Let \( L \) be a \(2\)-colourable oriented virtual link. If \( L \) is (\( \pm \))-amphichiral then \( J^2 ( L ) = - J^2 ( L ) \).
\end{proposition}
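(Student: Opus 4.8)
The plan is to track what happens to each of the three ingredients in the definition of \( J^2(L) \) — the sign of a classical crossing, the \( 2 \)-colour parity of a crossing, and the generating set of \( 2 \)-colourings — under the operation of passing to a mirror image (vertical or horizontal) together with the appropriate choice of orientation. Since \( L \) being (\( \pm \))-amphichiral means some positive or negative mirror image \( D' \) of a diagram \( D \) of \( L \) again represents \( L \), we have \( J^2(L) = J^2(D) = J^2(D') \); the goal is then to show \( J^2(D') = -J^2(D) \) purely diagrammatically, which gives \( J^2(L) = -J^2(L) \).

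First I would treat the vertical mirror image (crossing change at every classical crossing) with a positive orientation. A crossing change reverses the sign of every classical crossing. Crucially, the shadow \( S(D) \) is unchanged by a crossing change, so the set of \( 2 \)-colourings is literally the same set; moreover the colour-orientation configuration at a flat crossing that determines \( \cp_{\mathscr{C}} \) via \Cref{2Eq:oddeven} depends only on the shadow, the colouring, and the orientation — not on the over/under information — so \( \cp_{\mathscr{C}}(c) \) is unchanged for every crossing \( c \) and every \( 2 \)-colouring \( \mathscr{C} \). Hence for each \( \mathscr{C}_i \) in a (common) generating set, \( J_{\mathscr{C}_i}(D') = \sum_{\cp_{\mathscr{C}_i}(c)=1} \mathrm{sign}_{D'}(c) = -\sum_{\cp_{\mathscr{C}_i}(c)=1}\mathrm{sign}_{D}(c) = -J_{\mathscr{C}_i}(D) \), so \( J^2(D') = -J^2(D) \) entry-by-entry, and the unoriented (vertical, \( (+) \)) case is done. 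The negative vertical case follows because reversing the orientation on all components of a diagram leaves every crossing sign unchanged and leaves \( \cp_{\mathscr{C}}(c) \) unchanged (inspection of \Cref{2Eq:oddeven}: reversing both strands at a crossing rotates the picture by \( \pi \)), so a negative vertical mirror has the same \( J^2 \) as the positive one.

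Next I would handle the horizontal mirror image (reflection across the \( x=0 \) axis). Reflection reverses the sign of every classical crossing (a positive crossing becomes negative and vice versa), and it carries \( 2 \)-colourings of \( D \) bijectively to \( 2 \)-colourings of the reflected diagram \( D' \) — a reflected proper colouring is still proper — preserving global duality, so a generating set maps to a generating set. It remains to check that reflection preserves the parity assignment: the local colour-orientation configurations listed in \Cref{2Eq:oddeven} are permuted among themselves by a reflection (the reflection of an "even" configuration is again one of the two "even" configurations, similarly for "odd"), which one reads off the figure directly. Granting this, the same signed-sum computation as above yields \( J_{\mathscr{C}_i'}(D') = -J_{\mathscr{C}_i}(D) \) under the induced bijection \( \mathscr{C}_i \mapsto \mathscr{C}_i' \), hence \( J^2(D') = -J^2(D) \); and again the negative case adds nothing new since reversing all orientations does not change \( J^2 \).

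The main obstacle is the bookkeeping in the middle step: verifying that crossing change and reflection each preserve the \( \cp_{\mathscr{C}} \)-value of every crossing (equivalently, that they permute the configurations of \Cref{2Eq:oddeven} within the even class and within the odd class). This is a finite check over the local pictures — there are only a handful of colour/orientation configurations up to the symmetries already used in the proof of \Cref{2Prop:colourparity} (orientation reversal and dualizing on a strand) — so it is routine, but it is the one place where one must actually look at the diagrams rather than argue formally. Everything else is the observation that the sign of a crossing negates and that the combinatorics of colourings and generating sets is carried along faithfully.
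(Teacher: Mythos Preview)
Your proposal is correct and follows essentially the same approach as the paper: establish a bijection between the \(2\)-colourings of \(D\) and those of its mirror image, observe that parity is preserved under this bijection while crossing signs are negated, conclude that each entry of \(J^2\) is negated, and handle the \((-)\)-amphichiral case by noting that global orientation reversal leaves \(J^2\) unchanged. Your treatment is somewhat more explicit in separating the vertical and horizontal mirror cases and in flagging the local configuration check, but the argument is the same.
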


\begin{proof}
	Let \( D \) be a diagram of \( L \). Denote by \( m_+ ( D ) \) a positive mirror image of \( D \). There is a bijection between the \(2\)-colourings of \( D \) and \( m_+ ( D ) \): this bijection is the identity in the case of the vertical mirror image, and is induced by reflection in the case of the horizontal mirror image. In either case, given \( \mathscr{C} \) a \(2\)-colouring of \( D \) denote by \( m_+ ( \mathscr{C} ) \) the associated \( 2 \)-colouring of \( m_+ ( D ) \). There is also a bijection between the classical crossings of \( D \) and those of \( m_+ ( D ) \). As a positive mirror image preserves the orientation of the components of \( D \), it follows that a crossing of \( D \) is odd with respect to \( \cp_{\mathscr{C}} \) if and only if the associated crossing of \( m_+ ( D ) \) is odd with respect to \( \cp_{m_+ ( \mathscr{C} )} \).

	Combining the above observation with the fact that taking the vertical or the horizontal mirror image changes the sign of all classical crossings, it is clear from \Cref{3Eq:writhe} that
		\begin{equation}\label{3Eq:amphi}
			J_{\mathscr{C}} ( D ) = - J_{ m_+ ( \mathscr{C} ) } ( m_+ ( D ) ).
		\end{equation}
	If \( L \) is (\( + \))-amphichiral then \( D \) and \( m_+ ( D ) \) both represent \( L \) and
		\begin{equation*}
			J^2 ( D ) = J^2 ( m_+ ( D ) ) = J^2 ( L )
		\end{equation*}
	which, combined with \Cref{3Eq:amphi}, completes the proof in the (\( + \))-amphichiral case. The (\( - \))-amphichiral case follows from the observation that global orientation reversal has no affect on the \(2\)-colour writhe.
\end{proof}

Notice that, unlike the odd writhe of virtual knots, the \(2\)-colour writhe of (\( \pm \))-amphichiral virtual links is not forced to be zero, so that it can detect (\( \pm \))-amphichiral virtual links that are not slice.

\subsection{Reduction of computational complexity}\label{3Subsec:complexity}
Recall from \Cref{2Thm:bijection} that the number of \(2\)-colourings of a link grows exponentially with the number of components. This appears to make computation of the \(2\)-colour writhe intensive for links with of components. However, owing to the fact that a classical mixed crossing involves exactly two components, we deduce that the complexity is in fact quadratic.

First we set up a bijection between the set of \(2\)-colourings of an \(n\)-component link and the set \( \lbrace 0,1 \rbrace^{\times n} \).

\begin{definition}\label{3Def:basebijection}
	Let \( D \) be a diagram of a \(2\)-colourable oriented virtual link with \( |D| = n \), together with an arbitrary ordering of its components. Pick a \(2\)-colouring of \( D \), \( \mathscr{B} \), and declare it as the \emph{base \(2\)-colouring}. There is a bijection between \( \lbrace 0,1 \rbrace^{\times n} \) and the set of \( 2 \)-colourings of \( D \) defined as follows.
	
	Let \( \mathscr{B} \) be identified with \( (0, \ldots , 0 ) \in \lbrace 0,1 \rbrace^{\times n} \). Denote by \( v_i \in \lbrace 0,1 \rbrace^{\times n} \) the string with a \(1\) in the \(i\)-th position and \( 0 \) elsewhere, \( v_{i,j} \in \lbrace 0,1 \rbrace^{\times n} \) the string a with \(1\) in the \(i\)-th and \(j\)-th position and \(0\) elsewhere, and so forth. Identify \( v_i \in \lbrace 0,1 \rbrace^{\times n} \) with \( \overline{\mathscr{B}}^i \) (the \(2\)-colouring obtained by dualizing \( \mathscr{B} \) on the \(i\)-th component), \( v_{i,j} \in \lbrace 0,1 \rbrace^{\times n} \) with \( \overline{\mathscr{B}}^{i,j} \), and so forth.
	
	That every \(2\)-colouring of \( D \) is represented by an element of \( \lbrace 0,1 \rbrace^{\times n} \) is clear from the cardinality of the sets (recall from \Cref{2Thm:bijection} that \( D \) has \( 2^n \) \(2\)-colourings).
\end{definition}

Note that this bijection depends on the choice of base \(2\)-colouring. In what follows we shall freely interchange between elements of \( \lbrace 0,1 \rbrace^{\times n} \) and \( 2 \)-colourings, taking the bijection as understood.

\begin{definition}\label{3Def:nicegenerators}
	Let \( D \) be a diagram of a \(2\)-colourable oriented virtual link with \( |D| = n \), together with an arbitrary ordering of its components and a base \(2\)-colouring \( \mathscr{B} \). Given a \(2\)-colouring \( v = (e_1, e_2, \ldots, e_n ) \in \lbrace 0,1 \rbrace^{\times n} \), let \( | v | = \sum e_i  \). Define the subset \[ W^n_i = \lbrace v \in \lbrace 0,1 \rbrace^{\times n} ~|~ | v | = i \rbrace. \]
	Denote by \( \mathcal{G} \) the set of \(2\)-colourings of \(D\) defined as follows. If \( n \) is odd, then
		\begin{equation*}
				\mathcal{G} \coloneqq \bigcup_{i=1}^{ \lfloor \frac{n}{2} \rfloor } W^n_i .
		\end{equation*}
	If \( n \) is even and \( v \in W^n_{\frac{n}{2}} \), denote by \( \overline{v} \in W^n_{\frac{n}{2}} \) the string obtained by sending \( 0 \) to \( 1 \) and vice versa. Then
		\begin{equation*}
			\mathcal{G} \coloneqq \left( \bigcup_{i=1}^{ \lfloor \frac{n}{2} \rfloor } W^n_i \right) \cup \widetilde{W^n_{\frac{n}{2}}}
		\end{equation*}
	where \( \widetilde{W^n_{\frac{n}{2}}} \) is defined
		\begin{equation*}
			\widetilde{W^n_{\frac{n}{2}}} \coloneqq \lbrace v \in W^n_{\frac{n}{2}} ~|~ \text{\(v\) preceeds \( \overline{v} \) in the dictionary order} \rbrace . \qedhere
		\end{equation*}
\end{definition}

An example of this generating set in the case \( n = 4 \) is given in \Cref{3Fig:g4}.

\begin{figure}
	\begin{equation*}
		\begin{tabular}{c c c c c c c c c c c}
		& & & & & & \(\boxed{0011}\) & & & & \\
		& & & & \(\boxed{0001}\) & & \(\boxed{0101}\) & &\( 0111\) & & \\
		& & \(\boxed{0000}\) & & \(\boxed{0010}\) & & \(\boxed{0110}\) & & \(1011\) & & \(1111\)\\
		& & & & \(\boxed{0100}\) & & \(1001\) & & \(1101\) & & \\
		& & & & \(\boxed{1000}\) & & \(1010\) & & \(1110\) & & \\
		& & & & & & \(1100\) & & & & \\
		& & \rotatebox{-90}{\(=\)} & & \rotatebox{-90}{\(=\)} & & \rotatebox{-90}{\(=\)} & & \rotatebox{-90}{\(=\)} & & \rotatebox{-90}{\(=\)} \\
		\\
		\(\lbrace 0,1 \rbrace^{\times 4}\) & \(=\) & \(W^4_{0}\) & \(\cup \)& \(W^4_{1}\) & \(\cup\) & \(W^4_{2}\) & \(\cup\) &\( W^4_{3}\) & \(\cup\) & \(W^4_{4}\)
		\end{tabular}	
	\end{equation*}
	\caption{The set \( \lbrace 0,1 \rbrace^{\times 4} \) written as the union of \( W^4_{0} \) to \( W^4_{4} \). The generating set \( \mathcal{G} \) is represented by the boxed elements. The affect of global dualization is to send \( \mathcal{G} \) to \( \lbrace 0,1 \rbrace^{\times 4} \setminus \mathcal{G} \).}
	\label{3Fig:g4}
\end{figure}

\begin{proposition}\label{3Prop:nicegenerators}
	Let \( D \) be a diagram of a \(2\)-colourable oriented virtual link with \( |D| = n \), taken with an arbitrary ordering of its components and a base colouring \( \mathscr{B} \). The set \( \mathcal{G} \) is a generating set of \(2\)-colourings.
\end{proposition}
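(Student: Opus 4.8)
The plan is to check directly that $\mathcal{G}$ satisfies the definition of a generating set given in \Cref{2Def:generatingset}: a set of $2^{n-1}$ $2$-colourings of $D$ containing exactly one colouring from every globally-dual pair $\{\mathscr{C},\overline{\mathscr{C}}\}$. The first step is to see what global dualizing does under the bijection of \Cref{3Def:basebijection}. Since dualizing the base colouring $\mathscr{B}$ on a single component is an involution, dualizing $\mathscr{B}$ on a set of components $I\subseteq\{1,\dots,n\}$ and then globally dualizing (i.e.\ dualizing on all of $\{1,\dots,n\}$) has the net effect of dualizing $\mathscr{B}$ on $\{1,\dots,n\}\setminus I$. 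Hence if a $2$-colouring corresponds to $v\in\{0,1\}^{\times n}$, its global dual corresponds to $\overline v$, the bitwise complement. So ``$\mathcal{G}$ is a generating set'' is equivalent to the statement that $\mathcal{G}$ meets every complementary pair $\{v,\overline v\}$ in $\{0,1\}^{\times n}$ in exactly one element.

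Next I would verify this using the weight function, noting $|v|+|\overline v|=n$ for all $v$. By \Cref{3Def:nicegenerators}, $\mathcal{G}$ contains, from each weight level $W^n_i$, every string when $i<n/2$, the dictionary-lower half $\widetilde{W^n_{n/2}}$ when $i=n/2$ (which occurs only for $n$ even), and nothing when $i>n/2$. Now take a pair $\{v,\overline v\}$. If $|v|\neq n/2$, then one of $v$, $\overline v$ has weight strictly below $n/2$ and the other strictly above; the former lies in $\mathcal{G}$ and the latter does not, so exactly one is in $\mathcal{G}$. If $n$ is even and $|v|=n/2$, then $v\neq\overline v$ (no string equals its own complement), so exactly one of $v,\overline v$ precedes the other in the dictionary order, hence exactly one lies in $\widetilde{W^n_{n/2}}\subseteq\mathcal{G}$ and the other does not. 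In all cases $|\mathcal{G}\cap\{v,\overline v\}|=1$.

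The cardinality claim is then automatic: the $2^n$ elements of $\{0,1\}^{\times n}$ split into $2^{n-1}$ complementary pairs, and $\mathcal{G}$ picks exactly one from each, so $|\mathcal{G}|=2^{n-1}$. (This is the identity $\sum_{i<n/2}\binom{n}{i}=2^{n-1}$ for $n$ odd and $\sum_{i<n/2}\binom{n}{i}+\tfrac12\binom{n}{n/2}=2^{n-1}$ for $n$ even, both following from the symmetry $\binom{n}{i}=\binom{n}{n-i}$.) Together these show $\mathcal{G}$ is a generating set in the sense of \Cref{2Def:generatingset}, which also recovers the claim recorded in \Cref{3Fig:g4} that global dualization sends $\mathcal{G}$ onto its complement.

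The one point requiring a little care is the translation in the first step, identifying global dualizing with bitwise complementation through the chosen bijection; after that the argument is pure bookkeeping with weights and dictionary order, and I do not anticipate any real obstacle.
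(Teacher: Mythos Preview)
Your proposal is correct and follows essentially the same approach as the paper: both identify global dualizing with bitwise complementation under the bijection of \Cref{3Def:basebijection}, use the weight decomposition $|v|+|\overline v|=n$ to pair $W^n_i$ with $W^n_{n-i}$, and handle the middle level for even $n$ via the observation that no $v\in W^n_{n/2}$ equals its own complement. The only cosmetic difference is that you verify directly that $\mathcal{G}$ meets each pair $\{v,\overline v\}$ in exactly one element, whereas the paper phrases the same fact as ``dualization sends $\mathcal{G}$ to $\{0,1\}^{\times n}\setminus\mathcal{G}$''; these are equivalent formulations of the same bookkeeping.
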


\begin{proof}
	It is clear that \( \lbrace 0,1 \rbrace^{\times n} = \bigcup_{i=1}^n W^n_i \), so that we are required to show every \( W^n_i \) is a subset of \( \mathcal{G} \), or can be obtained by dualizing elements of \( \mathcal{G} \). The affect of global dualization on a \(2\)-colouring identified with the string \( ( e_1, e_2, \ldots , e_n ) \) is to send \(0\)'s to \(1\)'s and \(1\)'s to \(0\)'s. From this observation it is clear that \( W^n_i \) is sent to \( W^n_{n-i} \) under dualization so that \( \mathcal{G} \) is sent to \( \lbrace 0,1 \rbrace^{\times n} \setminus \mathcal{G} \), and that the proposition holds for \( n \) odd.
	
	For \(n\) even, it remains to show that \( W^n_{\frac{n}{2}} \) is the union of \( \widetilde{W^n_{\frac{n}{2}}} \) and the set obtained by dualizing all of the elements of \( \widetilde{W^n_{\frac{n}{2}}} \). This last statement follows from the fact that there are no elements \( v \in W^n_{\frac{n}{2}} \) with \( v = \overline{v} \), and that \( \left| \widetilde{W^n_{\frac{n}{2}}} \right| = \dfrac{1}{2} \left| W^n_{\frac{n}{2}} \right| \). Therefore \( \mathcal{G} \) is sent to \( \lbrace 0,1 \rbrace^{\times n} \setminus \mathcal{G} \), as required.
\end{proof}

With this generating set in place we are able to significantly reduce the number of individual computations required to determine the \(2\)-colour writhe.

Let \( D \) be a diagram of a \(2\)-colourable oriented virtual link with \( |D| = n \), taken with an arbitrary ordering of its components, and a base colouring \( \mathscr{B} \). We shall use this diagram for the remainder of the section. Define the sets
\begin{equation*}\label{3Eq:setdefinitions}
	\begin{aligned}
		C^i &\coloneqq \lbrace \text{classical crossings of \(D\) between the \(i\)-th component and another component} \rbrace \\
		\mathscr{B}^e &\coloneqq \lbrace \text{classical crossings of}~ D ~\text{that are even with respect to}~  \mathfrak{p}_{\mathscr{B}} \rbrace \\
		\mathscr{B}^o &\coloneqq \lbrace \text{classical crossings of}~ D ~\text{that are odd with respect to}~  \mathfrak{p}_{\mathscr{B}} \rbrace.
	\end{aligned}
\end{equation*}
Recall from \Cref{3Def:basebijection} that \( v_i \) denotes the \(2\)-colouring obtained from \( \mathscr{B} \) by dualizing the colouring on the \( i \)-th component; we use the shorthand \( J_i ( D ) = J_{v_i} ( D ) \), \( J_{i,j} ( D ) = J_{v_{i,j}} ( D ) \), and so on. It follows from \Cref{3Def:2colourwrithe} that
	\begin{equation}\label{3Eq:1dual}
		J_i ( D ) = \sum_{c \in C^i \cap \mathscr{B}^e } \text{sign}( c ) - \sum_{c \in C^i \cap \mathscr{B}^o } \text{sign}( c ) + J_{\mathscr{B}}.
	\end{equation}
That is, to \( J_{\mathscr{B}} \) we the add the signs of the mixed crossings lying on the \(i\)-th component that are even with respect to \( \cp_{\mathscr{B}} \), and minus the signs of those crossings that are odd (as they are exactly the crossings that change parity when \( \mathscr{B} \) is dualized on the \(i\)-th component). By a very similar argument we observe that
	\begin{equation}\label{3Eq:2dual}
		\begin{aligned}
			J_{i,j} ( D ) &= \Sigma \left( C^i \cap \mathscr{B}^e \right) - \Sigma \left( C^i \cap \mathscr{B}^o \right) + \Sigma \left( C^j \cap \mathscr{B}^e \right) - \Sigma \left( C^j \cap \mathscr{B}^o \right) \\
			&\quad+ 2 \left( \Sigma \left( C^i \cap C^j \cap \mathscr{B}^o \right) \right) 
			- 2 \left( \Sigma \left( C^i \cap C^j \cap \mathscr{B}^e \right) \right) + J_{\mathscr{B}} ( D ) \\
			&= J_i (D) + J_j (D) + 2 \left( \Sigma \left( C^i \cap C^j \cap \mathscr{B}^o \right) \right) - 2 \left( \Sigma \left( C^i \cap C^j \cap \mathscr{B}^e \right) \right) - J_{\mathscr{B}} (D)
		\end{aligned}
	\end{equation}
where we have used the shorthand \[ \Sigma \left( C^i \cap \mathscr{B}^e \right) = \sum_{c \in C^i \cap \mathscr{B}^e } \text{sign}(c) \] and so forth.

In general, the contributions to the \(2\)-colour writhe are given by the following formula.

\begin{theorem}\label{3Thm:induction}
	Let \( v = (e_1, e_2, \ldots , e_n ) \in \mathcal{G} \) be a \(2\)-colouring with \( 2 < |v| = m \). Let \( \lbrace p_1, p_2, \ldots , p_m \rbrace \) be the set of indices such that \( e_{p_k} = 1 \) in \( v \). Then
		\begin{equation}\label{3Eq:induction}
			J_v ( D ) = \sum_{k=1}^{m} \left( \sum_{l>k}^{m} J_{p_k, p_l} \right) - (m-2) \sum_{s=1}^m J_{p_s} + \dfrac{1}{2} (m-1)(m-2) J_{\mathscr{B}}
		\end{equation}
\end{theorem}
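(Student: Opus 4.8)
The plan is to prove \Cref{3Eq:induction} by induction on \( m = |v| \), with both the base case \( m = 3 \) and the inductive step flowing from a single recursion that is itself a direct descendant of \Cref{3Eq:1dual,3Eq:2dual}. The whole argument rests on one combinatorial observation, already used implicitly in the proof of \Cref{2Prop:selfcrossings} and in the derivation of \Cref{3Eq:1dual}: if a \(2\)-colouring is dualized on a set \( \Lambda \) of components, then the \(2\)-colour parity of a classical crossing \( c \) changes, relative to the undualized colouring, if and only if \( c \) is a mixed crossing between components \( a \neq b \) with exactly one of \( a, b \) in \( \Lambda \). Indeed, dualizing on \( \Lambda \) flips the colour of every arc whose component lies in \( \Lambda \), and by \Cref{2Eq:oddeven} the parity of \( c \) is altered precisely when exactly one of the two strands through \( c \) has its colour flipped; at a self-crossing both strands belong to the same component, so this never happens, which recovers \Cref{2Prop:selfcrossings}.

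I would then introduce, for each unordered pair of distinct components \( \{a,b\} \), the integer \( D_{ab} \) equal to the sum of the signs of the mixed crossings between \( a \) and \( b \) that are \( \cp_{\mathscr{B}} \)-even, minus the sum of the signs of those that are \( \cp_{\mathscr{B}} \)-odd. The observation above immediately yields the master formula
\[
	J_v ( D ) = J_{\mathscr{B}} + \sum_{a \in \Lambda}\sum_{b \notin \Lambda} D_{ab}
\]
for the \(2\)-colouring \( v \) obtained from \( \mathscr{B} \) by dualizing on \( \Lambda \). Taking \( \Lambda = \{ p_k \} \) recovers \Cref{3Eq:1dual}, taking \( \Lambda = \{ p_k, p_l \} \) recovers \Cref{3Eq:2dual}, and the two special cases together give, after rearrangement, the identity \( 2 D_{p_k p_l} = J_{p_k} + J_{p_l} - J_{p_k, p_l} - J_{\mathscr{B}} \).

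Comparing the master formula for \( \Lambda = \{ p_1, \dots, p_m \} \) with that for \( \Lambda = \{ p_1, \dots, p_{m-1} \} \), the difference equals \( -\sum_{k<m} D_{p_k p_m} + \sum_{b \notin \{ p_1, \dots, p_m \}} D_{p_m b} \); rewriting the second sum via the \( \Lambda = \{ p_m \} \) case and substituting the formula for \( D_{p_k p_m} \) produces the recursion
\[
	J_{p_1, \dots, p_m} ( D ) = J_{p_1, \dots, p_{m-1}} ( D ) + \sum_{k=1}^{m-1} \bigl( J_{p_k, p_m} - J_{p_k} \bigr) - (m-2) J_{p_m} + (m-2) J_{\mathscr{B}}.
\]
For \( m = 3 \) this recursion is already \Cref{3Eq:induction} (the \( J \)-terms on the right being the single- and pairwise-dual writhes supplied by \Cref{3Eq:1dual,3Eq:2dual}). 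For \( m > 3 \) one substitutes the inductive hypothesis for \( J_{p_1, \dots, p_{m-1}}( D ) \) and collects terms: the pairwise terms combine into \( \sum_{k < l} J_{p_k, p_l} \), the coefficient of each \( J_{p_s} \) becomes \( -(m-3) - 1 = -(m-2) \), and the coefficient of \( J_{\mathscr{B}} \) becomes \( \tfrac12 (m-2)(m-3) + (m-2) = \tfrac12 (m-1)(m-2) \), which is \Cref{3Eq:induction}. (Equivalently one may bypass the recursion and substitute the expressions for \( D_{p_k p_l} \) and, via the \( \Lambda = \{p_k\} \) case, for the remaining \( D \)-terms directly into the master formula for \( \Lambda = \{p_1, \dots, p_m\} \); the inductive presentation is the one matching the statement's name.)

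I expect the only genuine obstacle to be pinning down the flip observation carefully — being explicit about the effect of dualizing \( \Lambda \) on each of the two strands at a crossing, and hence on its parity, and why self-crossings are immune — after which the master formula is immediate and everything downstream is routine bookkeeping with binomial coefficients.
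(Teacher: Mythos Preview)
Your proposal is correct and follows essentially the same approach as the paper: both proceed by induction on \( m = |v| \), verify the base case \( m = 3 \) directly, and for the inductive step compute the difference \( J_{p_1,\dots,p_m} - J_{p_1,\dots,p_{m-1}} \) and match it against the increment of the right-hand side of \Cref{3Eq:induction}. Your \( D_{ab} \) and the accompanying master formula are simply a tidier repackaging of the paper's quantities \( \Sigma(C^a \cap C^b \cap \mathscr{B}^e) - \Sigma(C^a \cap C^b \cap \mathscr{B}^o) \) and its explicit expressions for \( J_v \) and \( J_{v'} \); the paper carries out the same subtraction in that heavier notation and then verifies that \( P(i+1) - P(i) \) agrees with it, which is exactly your recursion followed by the bookkeeping you describe.
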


\begin{proof}
	We proceed by induction. The base case, \( |v| = 3 \), may be checked by hand. We obtain
		\begin{equation*}
			\begin{aligned}
				J_{i,j,k} ( D ) &= \Sigma \left( C^i \cap \mathscr{B}^e \right) - \Sigma \left( C^i\cap \mathscr{B}^o \right) + \Sigma \left( C^j \cap \mathscr{B}^e \right) - \Sigma \left( C^j \cap \mathscr{B}^o \right) \\ 
				&\quad + \Sigma \left (C^k \cap \mathscr{B}^e \right) - \Sigma \left (C^k \cap \mathscr{B}^o \right) + 2 \Sigma \left (C^i \cap C^j \cap \mathscr{B}^o \right) \\
				&\quad - 2 \Sigma \left( C^i \cap C^j \cap \mathscr{B}^e \right) + 2 \Sigma \left( C^i \cap C^k \cap \mathscr{B}^o \right) - 2 \Sigma \left( C^i \cap C^k \cap \mathscr{B}^e \right) \\
				&\qquad + 2 \Sigma \left( C^j \cap C^k \cap \mathscr{B}^o \right) - 2 \Sigma \left( C^j \cap C^k \cap \mathscr{B}^e \right) + J_{\mathscr{B}} ( D ) \\
				&= J_{i,j} ( D ) + J_{i,k} ( D ) + J_{j,k} ( D ) - J_i ( D ) - J_j ( D ) - J_k ( D ) + J_{\mathscr{B}} ( D )
			\end{aligned}
		\end{equation*}
	as required.
	
	Assume that the proposition holds for \( |v|=i \). Let \( v' \) be obtained from \( v \) by dualizing on the \(r\)-th component, so that \( | v' | = i +1 \). It follows from \Cref{3Def:2colourwrithe} that
		\begin{equation*}
			\begin{aligned}
				J_{v'} ( D ) &= \sum^{i+1}_{t=1} \left( \Sigma \left( C^t \cap \mathscr{B}^e \right) - \Sigma \left( C^t\cap \mathscr{B}^o \right) \right)\\ 
				&\quad + 2 \left( \sum^{i+1}_{s=1} \left( \sum_{l>s}^{i+1} \left( \Sigma \left( C^l \cap C^s \cap \mathscr{B}^o \right) - \Sigma \left( C^l \cap C^s \cap \mathscr{B}^e \right) \right) \right) \right) + J_{\mathscr{B}} ( D )
			\end{aligned}
		\end{equation*}
	and
		\begin{equation*}
			\begin{aligned}
					J_{v} ( D ) &= \sum^{i}_{t=1} \left( \Sigma \left( C^t \cap \mathscr{B}^e \right) - \Sigma \left( C^t\cap \mathscr{B}^o \right) \right)\\ 
				&\quad + 2 \left( \sum^{i}_{s=1} \left( \sum_{l>s}^{i} \left( \Sigma \left( C^l \cap C^s \cap \mathscr{B}^o \right) - \Sigma \left( C^l \cap C^s \cap \mathscr{B}^e \right) \right) \right) \right) + J_{\mathscr{B}} ( D )
				\end{aligned}
		\end{equation*}
	where we have fixed \( i + 1 = r \) in the ordering of the components of \( D \). Then
	
	\begin{align}\label{3Eq:difference}
			\notag J_{v'} ( D ) - J_{v} ( D ) &= \Sigma \left( C^r \cap \mathscr{B}^e \right) - \Sigma \left( C^r \cap \mathscr{B}^o \right) \\
			&\quad + 2 \left( \sum_{s=1}^{i} \left( \Sigma \left( C^r \cap C^s \cap \mathscr{B}^o \right) - \Sigma \left( C^r \cap C^s \cap \mathscr{B}^e \right) \right) \right) \\
			\notag &= J_r ( D ) - J_{\mathscr{B}} ( D ) + 2 \left( \sum_{s=1}^{i} \left( \Sigma \left( C^r \cap C^s \cap \mathscr{B}^o \right) - \Sigma \left( C^r \cap C^s \cap \mathscr{B}^e \right) \right) \right).			
	\end{align}
	Denote by \( P(i) \) the right hand side of \Cref{3Eq:induction} for \( m = i \). Then
	\begin{equation*}
		\begin{aligned}
			P(i+1) - P(i) &= \left( \sum_{k=1}^{i} J_{r,k} ( D ) \right) - (i-1) J_r ( D ) - \left( \sum_{l=1}^i J_l (D) \right) + (i-1) J_{\mathscr{B}} (D) \\
			&= \left( \sum_{k=1}^{i}  J_{r,k} (D) - J_k ( D) \right) - (i-1) J_r (D) + (i-1) J_{\mathscr{B}} (D) \\
			&= \left( \sum_{k=1}^{i}  J_r (D) + J_k (D) + 2 \left( \Sigma \left( C^r \cap C^k \cap \mathscr{C}^o \right) \right) - 2 \left( \Sigma \left( C^r \cap C^k \cap \mathscr{C}^e \right) \right) \right. \\
			&\quad + J_{\mathscr{B}} (D) - J_k (D) \Bigg) - (i-1) J_r (D) + (i-1) J_{\mathscr{B}} (D) \\
			&= i J_r ( D ) - i J_{\mathscr{B}} (D) + 2\left( \sum_{k=1}^{i} \left( \Sigma \left( C^r \cap C^k \cap \mathscr{C}^o \right) \right) - \left( \Sigma \left( C^r \cap C^k \cap \mathscr{C}^e \right) \right) \right) \\
			&\quad - (i-1) J_r (D) + (i-1) J_{\mathscr{B}} (D) \\
			&= J_r (D) -J_{\mathscr{B}} (D) + 2 \left( \sum_{k=1}^{i} \left( \Sigma \left( C^r \cap C^k \cap \mathscr{C}^o \right) \right) - \left( \Sigma \left( C^r \cap C^k \cap \mathscr{C}^e \right) \right) \right). \\
		\end{aligned}
	\end{equation*}
	Comparing the final line above to that of \Cref{3Eq:difference} completes the proof.
\end{proof}

\subsection{Chequerboard colourable links}\label{3Subsec:cbable}

In the case of chequerboard colourable virtual links the \(2\)-colour parity, and hence the \(2\)-colour writhe, become easier to determine. To see this we require the notion of an \emph{abstract link diagram} \cite{Carter2000, Kamada2000}.

Let \( D \) be a diagram of a virtual link, as in \Cref{3Fig:ALD}, then
\begin{enumerate}
	\item about the classical crossings place a disc as shown in  \Cref{3Fig:ASDCrossing}
	\item about the virtual crossings place two discs as shown in \Cref{3Fig:ASDVirtual}
	\item join up these discs with collars about the arcs of the diagram.
\end{enumerate}
The result is a link diagram on a surface (which deformation retracts onto the underlying curve of the diagram). We denote this diagram and surface pair by \( F_D \), and refer to it as the \emph{abstract link diagram associated to \( D \)}. An example of an abstract link diagram is given in \Cref{3Fig:ALD}.

\begin{figure}
	\includegraphics[scale=1]{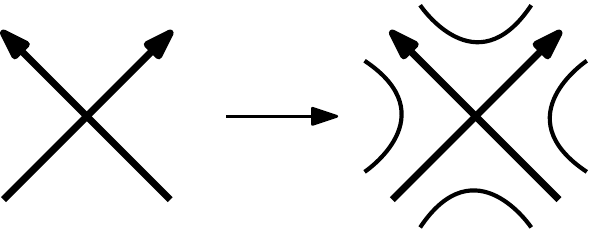}
	\caption{Component of the surface of an abstract link diagram about a classical crossing.}
	\label{3Fig:ASDCrossing}
\end{figure}
\begin{figure}
	\includegraphics[scale=1]{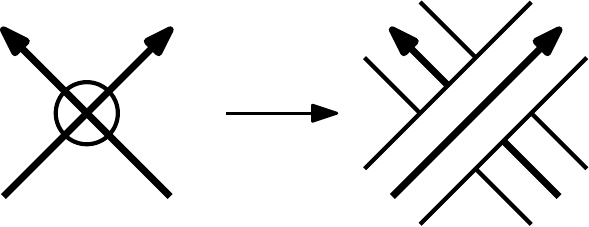}
	\caption{Component of the surface of an abstract link diagram about a virtual crossing.}
	\label{3Fig:ASDVirtual}
\end{figure}
\begin{figure}
	\includegraphics[scale=0.4]{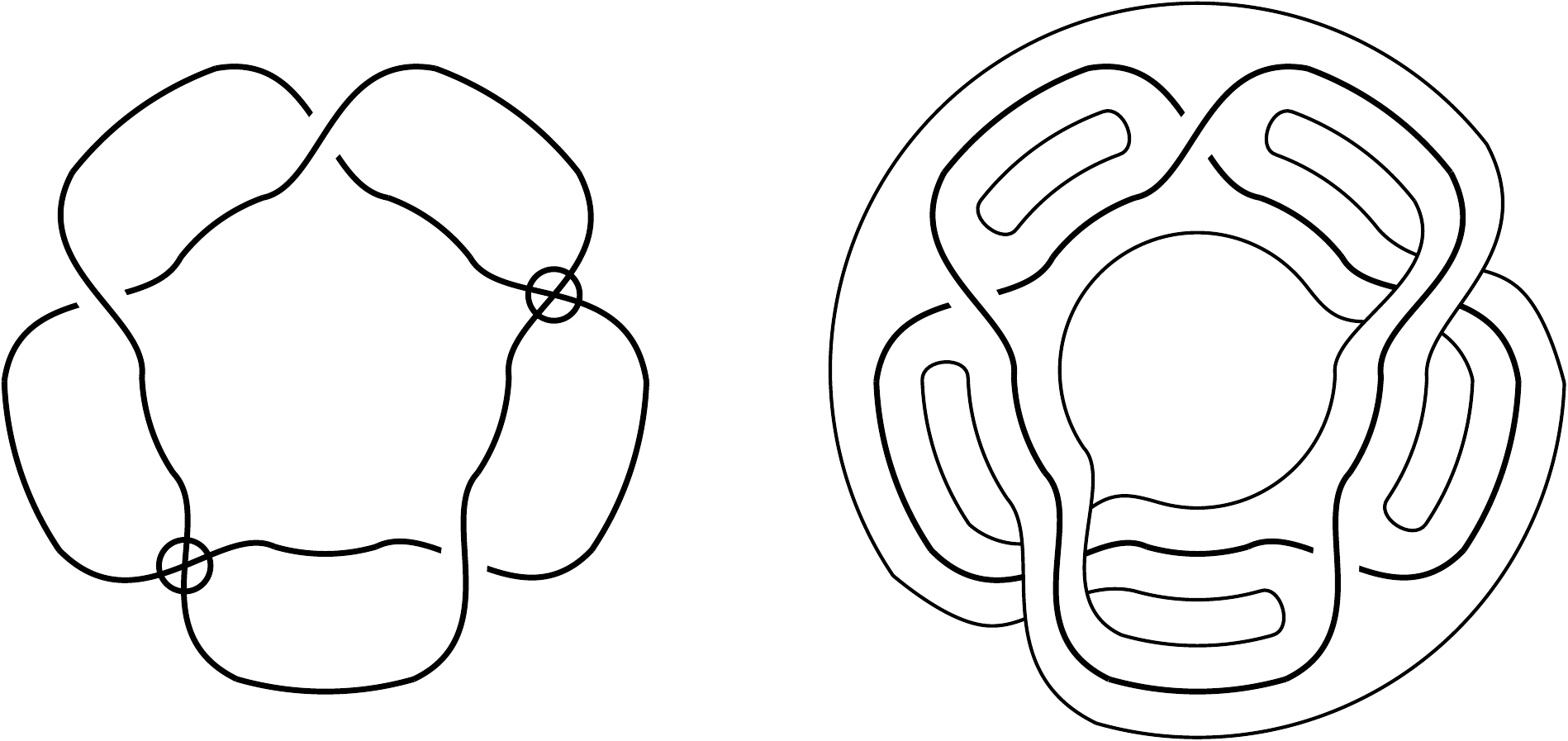}
	\caption{A virtual knot diagram and its associated abstract link diagram.}
	\label{3Fig:ALD}
\end{figure}

\begin{definition}
		\label{3Def:cbable}
		Let \( D \) be a diagram of a virtual link \( L \). Form the associated abstract link diagram \( F_D \) as described above. We say that \( F_D \) is \emph{chequerboard colourable} if a shading can be placed on the components of the complement of the underlying curves of the link diagram in \( F_D \) such that at each classical crossing we have the following (up to rotation)
		\begin{center}
			\includegraphics[scale=1]{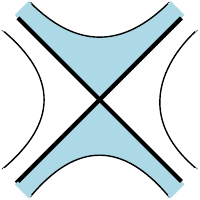}
		\end{center}
		A virtual link is \emph{chequerboard colourable} if it possesses a diagram whose associated abstract link diagram is chequerboard colourable.
\end{definition}

A chequerboard colourable virtual link is \(2\)-colourable (the converse is not true, however; counterexamples are given in \Cref{2Fig:knot21,3Fig:example1}).

\begin{theorem}
	\label{3Thm:cbwrithe}
	Let \( L \) be an oriented virtual link. The following are equivalent
	\begin{enumerate}[label=(\roman*)]
		\item \( L \) is chequerboard colourable
		\item \( L \) is \(2\)-colourable, and there exists a \(2\)-colouring of a diagram of it such that every classical crossing is even with respect to the parity associated to this \(2\)-colouring.
	\end{enumerate}
\end{theorem}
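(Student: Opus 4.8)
The plan is to prove the equivalence by exhibiting an explicit dictionary, mediated by the orientation of $D$, between chequerboard colourings of the abstract link diagram $F_D$ and $2$-colourings of $S(D)$ with respect to which every classical crossing is even. The underlying observation is that at a classical crossing the chequerboard condition on the four surrounding regions translates, arc by arc, into exactly one of the two \emph{even} colour–orientation configurations of \Cref{2Eq:oddeven}, while the two \emph{odd} configurations are precisely the ones obstructing a local chequerboard pattern.

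For the implication (i) $\Rightarrow$ (ii), fix a diagram $D$ of $L$ with $F_D$ chequerboard coloured. Since the four regions around a classical crossing alternate shaded/unshaded, every arc of the underlying curve has the shaded region on exactly one of its two sides; using the orientation of $D$, colour that arc red if the shaded region lies to its left and green if it lies to its right (either convention works, the other giving the global dual). I would then verify, against the figure in \Cref{2Def:2-colouring}, that this is a genuine $2$-colouring of $S(D)$ (equivalently, the colour switches along each strand at every flat crossing; virtual crossings impose no constraint and the colour is carried consistently through them, as the two strands occupy disjoint discs in $F_D$), and, against \Cref{2Eq:oddeven}, that every classical crossing is even — the chequerboard pattern forces the colour–orientation configuration there to be an even one.

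For (ii) $\Rightarrow$ (i), run the dictionary backwards: given a $2$-colouring $\mathscr{C}$ of $D$ with all crossings even, declare, for each arc of the underlying curve in $F_D$, the region on its left to be shaded if the arc is red and the region on its right to be shaded if it is green. Along each arc this prescribes a consistent shading (one side shaded, the other not); the point is that these prescriptions agree around every classical crossing precisely because the crossing is even — the even configurations of \Cref{2Eq:oddeven} are exactly those for which the four half-region prescriptions fit a single chequerboard pattern. Thus the shading is consistently defined on a neighbourhood of the underlying curve; since $F_D$ deformation retracts onto that curve (so that no complementary region is ``enclosed'' by it), the shading extends uniquely over all of $F_D$, yielding a chequerboard colouring and showing $L$ is chequerboard colourable.

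The main obstacle is this last global extension step. The hypothesis that every crossing is even handles the per-crossing (local) compatibility, so what remains is to rule out a global clash — a complementary region $\rho$ of $F_D$ whose frontier meets the underlying curve in several arcs receiving conflicting verdicts. I expect to dispose of this by tracing the frontier of each complementary region, which alternates between arcs of the underlying curve and crossing-corners, and propagating the per-crossing compatibility along it; the fact that $F_D$ is a regular neighbourhood of the curve (hence retracts onto it) is what guarantees that such a trace closes up without obstruction, i.e.\ that the relevant $\Z_2$-consistency class vanishes. Everything else — the crossing-by-crossing matching against \Cref{2Eq:oddeven} and \Cref{2Def:2-colouring} — is routine case-checking.
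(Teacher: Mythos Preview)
Your proposal is correct and follows essentially the same approach as the paper: both set up the same orientation-mediated dictionary between chequerboard shadings of $F_D$ and $2$-colourings of $D$ (the paper phrases the rule via a clockwise orientation on shaded regions, you via left/right of the arc, which differ at most by global duality), and both verify the claim by the same local case-check at classical crossings. If anything, you are more explicit than the paper about the global consistency step in (ii)$\Rightarrow$(i), which the paper dispatches with ``it is clear from the diagrams above''; your observation that $F_D$ deformation retracts onto the underlying curve, so that each complementary region is a collar of a boundary circle and the per-crossing compatibility propagates around its frontier without holonomy, is exactly the right justification.
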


\begin{proof}
	(i)\(\Rightarrow\)(ii): Let \( D \) be a diagram of \( L \). We show that picking a chequerboard colouring of \( F_D \) distinguishes a \(2\)-colouring of \( D \), and that every classical crossing is even with respect to the parity associated to this \(2\)-colouring.
	
	First, we take an argument given by Bar-Natan and Morrison in the plane \cite{Bar-Natan2006}, and reproduce it on a surface. Form the associated abstract link diagram \( F_D \) as described above. Place a chequerboard colouring on \( F_D \), and a clockwise orientation on the shaded regions. To produce a \(2\)-colouring, compare the orientation of the arcs of \( S(D) \) (as inherited from \( D \)) to that induced by the orientation of the shaded region. If the orientations agree, colour the arc red, otherwise colour it green. That this yields a \(2\)-colouring is made clear in the following figures:
	\begin{center}
		\includegraphics[scale=0.75]{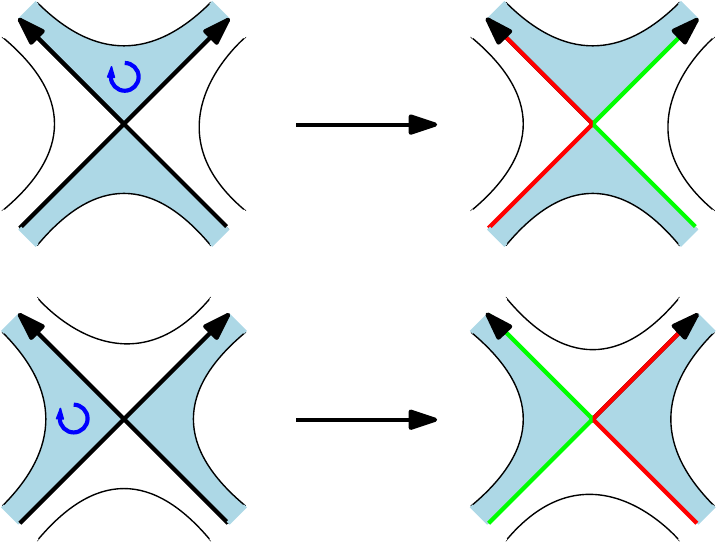}
	\end{center}
	It is also clear from these figures that \( \cp_{\mathscr{C}} ( c ) = 0 \) for all classical crossings \(c \) of \( D \), so that \( J_{\mathscr{C}} ( D ) = 0 \) and \( 0 \) appears in \( J^2 ( L ) \).

	(ii)\(\Rightarrow\)(i): Let \( D \) be a diagram of \( L \), and \( \mathscr{C} \) be a \(2\)-colouring of \( D \) such that every classical crossing is even with respect to \( \cp_{\mathscr{C}} \). Form \( F_D \); it is clear from the diagrams above that shading on the right of red arcs, and on the left or green arcs of \( D \) yields a chequerboard colouring of \( F_D \).
\end{proof}

It follows that if \( L \) is a chequerboard colourable oriented virtual link then \( 0 \) appears in \( J^2 ( L ) \).

\begin{corollary}
	\label{3Cor:classical}
	Let \( L \) be an oriented classical link. Then \( L \) is \(2\)-colourable, and there exists a \(2\)-colouring of a diagram \( D \) (of \( L \)) such that every classical crossing is even with respect to the parity associated to this \(2\)-colouring.
\end{corollary}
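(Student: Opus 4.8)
The plan is to reduce the statement to \Cref{3Thm:cbwrithe}: its conclusion is exactly condition (ii) of that theorem applied to $L$, so it suffices to show that every oriented classical link is chequerboard colourable in the sense of \Cref{3Def:cbable}.

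First I would fix a planar diagram $D$ of $L$ — one with no virtual crossings — which exists because $L$ is classical. The shadow $S(D)$ is then a $4$-valent graph embedded in $S^2$, and I claim that the components of its complement admit a chequerboard colouring. The standard argument: fix a basepoint region and colour any region by the parity of the number of times a generic path to it meets $S(D)$; this is well defined because $H_1(S^2;\Z_2)=0$, so the class $[S(D)]$ vanishes and the mod-$2$ intersection number of a path with $S(D)$ depends only on the path's endpoints. Regions sharing an edge then receive opposite colours, and so at each classical crossing the four local quadrants alternate — which is precisely the local condition in \Cref{3Def:cbable}.

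Next I would note that, since $D$ is planar, the abstract link diagram surface $F_D$ can be realised as a regular neighbourhood of $S(D)$ inside $S^2$: the crossing discs of \Cref{3Fig:ASDCrossing} and the collars along the arcs all sit in the plane, and there are no virtual crossings to force any handles. Hence $F_D\setminus S(D)$ is an open subsurface of $S^2\setminus S(D)$; each of its connected components lies in a unique complementary region of $S(D)$ in $S^2$, so the chequerboard colouring above pulls back to a shading of the regions of $F_D$ still satisfying the crossing condition of \Cref{3Def:cbable}. Thus $F_D$ is chequerboard colourable, so $L$ is a chequerboard colourable virtual link, and \Cref{3Thm:cbwrithe}(i)$\Rightarrow$(ii) completes the proof.

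I expect the only mild obstacle to be the first step — verifying that a planar $4$-valent diagram is chequerboard colourable and that $F_D$ is planar when $D$ is — but both are classical. As an alternative for the $2$-colourability assertion alone, one could instead invoke \Cref{2Thm:bijection}: in the simple Gauss diagram of a classical diagram, the number of chord endpoints on each core circle is even (it equals twice the number of self-crossings of that component plus the number of its mixed crossings, and a component, being a loop in the plane, crosses any other component an even number of times), so no core circle is degenerate. This route does not, however, yield the refinement that some $2$-colouring makes every crossing even, so the argument via \Cref{3Thm:cbwrithe} is the more efficient one.
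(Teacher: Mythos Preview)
Your proposal is correct and follows the same route as the paper: show that a classical link is chequerboard colourable (via a planar diagram) and then invoke \Cref{3Thm:cbwrithe}. The paper's proof is simply the one-line observation that any chequerboard colouring of the plane will do, whereas you have spelled out the standard justification in more detail.
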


\begin{proof}
	Every classical link is chequerboard colourable: any chequerboard colouring of the plane will do.
\end{proof}

Using the distinguished \(2\)-colouring described in the proof of \Cref{3Thm:cbwrithe}, we observe that the \(2\)-colour writhe of a chequerboard colourable link may be determined from the pairwise linking numbers of its components.

\begin{definition}
	\label{3Def:linkingnumber} Let \( D = D_1 \cup D_2 \cup \cdots \cup D_n \) be a virtual link diagram with components \( D_i \), for \( i \in \lbrace 1, \ldots , n \rbrace \). Denote by \( lk ( D_i, D_j ) \) the sum of the signs of the mixed crossings between \( D_i \) and \( D_j \). Quantities such as \( lk ( D_i \cup D_j , D_k ) \) and \( lk ( D_i, D ) \) are defined similarly.
\end{definition}

Note that this definition of \( lk ( D_i, D_j ) \) does not contain the factor of \( \frac{1}{2} \) present in the classical definition.

\begin{proposition}
	\label{3Prop:cbwrithe2}
	Let \( D \) be a diagram of a chequerboard colourable oriented virtual link \( L \), taken with a chequerboard colouring of \( F_D \). Let \( \mathscr{C} \) be the \(2\)-colouring of \( D \) distinguished by this chequerboard colouring (as constructed in the proof of \Cref{3Thm:cbwrithe}). Let \( \mathcal{G} \) be the generating set with base colouring \( \mathscr{C} \) (described in \Cref{3Def:nicegenerators}). Then
	\begin{equation*}
		\begin{aligned}
			&J_i ( D ) = lk(D_i, D) \\
			&J_{i,j} ( D ) = lk ( D_i \cup D_j , D )
		\end{aligned}
	\end{equation*}
	where \( D_i \) is the \(i\)-th component of \( D \).
\end{proposition}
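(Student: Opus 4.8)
The plan is to exploit the fact that the distinguished colouring \( \mathscr{C} \) makes every classical crossing even, and then simply to specialise the formulas \Cref{3Eq:1dual} and \Cref{3Eq:2dual} that were already established for an arbitrary base colouring \( \mathscr{B} \). By the proof of \Cref{3Thm:cbwrithe}, the colouring \( \mathscr{C} \) satisfies \( \cp_{\mathscr{C}}(c) = 0 \) for every classical crossing \( c \) of \( D \). Taking \( \mathscr{B} = \mathscr{C} \) as the base colouring of \( \mathcal{G} \), this says precisely that \( \mathscr{B}^{o} = \varnothing \), that \( \mathscr{B}^{e} \) is the entire set of classical crossings of \( D \), and that \( J_{\mathscr{B}}(D) = J_{\mathscr{C}}(D) = 0 \).

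For the first identity, substitute \( \mathscr{B}^{o} = \varnothing \) and \( J_{\mathscr{B}}(D) = 0 \) into \Cref{3Eq:1dual}: two of the three summands vanish, leaving \( J_{i}(D) = \sum_{c \in C^{i}} \operatorname{sign}(c) \), the signed count of all mixed crossings incident to \( D_{i} \). Since every classical crossing meets exactly two components, \( C^{i} \) is the disjoint union, over \( k \ne i \), of the sets of \( D_{i} \)--\( D_{k} \) crossings; hence this signed count is \( \sum_{k \ne i} lk(D_{i}, D_{k}) = lk(D_{i}, D) \) in the notation of \Cref{3Def:linkingnumber}, which is the claimed value of \( J_{i}(D) \).

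For the second identity, do the same with \Cref{3Eq:2dual}: setting \( \mathscr{B}^{o} = \varnothing \) and \( J_{\mathscr{B}}(D) = 0 \) collapses it to \( J_{i,j}(D) = J_{i}(D) + J_{j}(D) - 2\,\Sigma(C^{i} \cap C^{j}) \). Using once more that a crossing meets precisely two components, \( C^{i} \cap C^{j} \) is exactly the set of \( D_{i} \)--\( D_{j} \) crossings, so \( \Sigma(C^{i} \cap C^{j}) = lk(D_{i}, D_{j}) \). Expanding \( lk(D_{i}, D) = lk(D_{i}, D_{j}) + \sum_{k \notin \{i,j\}} lk(D_{i}, D_{k}) \), and likewise for \( lk(D_{j}, D) \), the term \( 2\,lk(D_{i}, D_{j}) \) cancels and there remains \( \sum_{k \notin \{i,j\}} \big( lk(D_{i}, D_{k}) + lk(D_{j}, D_{k}) \big) = lk(D_{i} \cup D_{j}, D) \).

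I expect no genuine obstacle: the argument is bookkeeping layered on \Cref{3Eq:1dual}--\Cref{3Eq:2dual}. The one point requiring care is the interface with \Cref{3Def:linkingnumber} — being explicit that, because a classical crossing involves exactly two components, \( C^{i} \) really is the union of the \( D_{i} \)--\( D_{k} \) crossing sets and \( C^{i} \cap C^{j} \) really is the \( D_{i} \)--\( D_{j} \) crossing set (so that no self-crossing of \( D_{i} \) is ever counted), and that \( lk(D_{i} \cup D_{j}, D) \) is to be read as \( \sum_{k \notin \{i,j\}} lk(D_{i} \cup D_{j}, D_{k}) \).
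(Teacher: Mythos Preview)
Your proposal is correct and follows essentially the same approach as the paper: both observe that the distinguished colouring \( \mathscr{C} \) has \( \mathscr{C}^{o} = \varnothing \) and \( J_{\mathscr{C}}(D) = 0 \), then specialise \Cref{3Eq:1dual} and \Cref{3Eq:2dual} to obtain the stated linking-number expressions. Your write-up is in fact slightly more explicit than the paper's about why \( \Sigma(C^{i}) = lk(D_{i}, D) \) and \( \Sigma(C^{i} \cap C^{j}) = lk(D_{i}, D_{j}) \), but the argument is the same.
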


\begin{proof}
	As observed in the proof of \Cref{3Thm:cbwrithe}, every classical crossing of \( D \) is even with respect to the parity \( \cp_{\mathscr{C}} \), so that \( \mathscr{C}^{o} = \emptyset \) and \( C^i \cap \mathscr{C}^e = C^i \) (for \( C^i \), \( \mathscr{C}^e \) and \( \mathscr{C}^{o} \) as defined on \cpageref{3Eq:setdefinitions}) and \( J_{\mathscr{C}} ( D ) = 0 \). \Cref{3Eq:1dual,3Eq:2dual} become
		\begin{equation*}
			\begin{aligned}
				J_i ( D ) &= \sum_{c \in C^i } \text{sign}( c ) = lk ( D_i, D ) \\
				J_{i,j} ( D ) &= J_i + J_j - 2 \left( \sum_{c \in C^i \cap C^j } \text{sign}( c ) \right) \\
								&= lk ( D_i, D ) + lk ( D_j, D ) - 2lk ( D_i, D_j ) \\
								&= lk ( D_i \cup D_j , D )
			\end{aligned}
		\end{equation*}
	as required.
\end{proof}

Combining this with \Cref{3Thm:induction}, we see that the \(2\)-colour writhe of a chequerboard colourable virtual link may be determined from the pairwise linking numbers of its components. For non-chequerboard colourable virtual links, however, the \(2\)-colour writhe is a stronger invariant. An example of this is given in \Cref{3Fig:example1}: the virtual link depicted is of linking number \( 0 \), but has non-trivial \(2\)-colour writhe.

We conclude this section by demonstrating that the techniques of parity projection may be applied to links via the \( 2 \)-colour parity. Parity projection is a powerful construction that has been used to prove a number of results concerning virtual knots \cite{Manturov2010}, but its application to virtual links has been limited. This is due to the technical restrictions of parity theories for virtual links. In contrast, the \( 2 \)-colour parity lends itself naturally to parity projection.

Let \( D \) be an oriented virtual link diagram and \( \mathscr{C} \) be a \( 2 \)-colouring of it. Denote by \( \overline{D}^{\mathscr{C}} \) the diagram obtained from \( D \) by replacing all classical crossings of \( D \) which are odd with respect to \( \cp_{\mathscr{C}} \) with virtual crossings. The diagram \( \overline{D}^{\mathscr{C}} \) is known as the \emph{projection of \( D \) with respect to \( \cp_{\mathscr{C}} \) }.

Parity projection interacts well with the virtual Reidemeister moves; we outline necessary results here, and refer the interested reader to \cite[Section \(3\)]{Manturov2010}. If \( D' \) is related to \( D \) via a sequence of virtual Reidemeister moves then the \(2\)-colouring \( \mathscr{C} \) induces a \(2\)-colouring of \(D'\), denoted \( \mathscr{C}' \). Then the projection \( \overline{D'}^{\mathscr{C}'} \) is related to \( \overline{D}^{\mathscr{C}} \) via a sequence of virtual Reidemeister moves.

In what follows a \emph{minimal crossing diagram} of a virtual link \( L \) is a diagram with the minimum number of classical crossings, with the minimum taken across all diagrams of \( L \). We demonstrate the utility of parity projection for links by using it to demonstrate that a minimal crossing diagram of a chequerboard colourable virtual link is itself chequerboard colourable. The corresponding result for virtual knots may be proved using the Gaussian parity, but the extension to links requires the \(2 \)-colour parity. The author thanks Hans Boden and Homayun Karimi for sharing the following argument with him.

\begin{proposition}\label{3Prop:cbminimal}
Let \( L \) be a chequerboard colourable oriented virtual link. A minimal crossing diagram of \( L \) is chequerboard colourable.
\end{proposition}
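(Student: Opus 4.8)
The plan is to combine the characterisation of chequerboard colourability in \Cref{3Thm:cbwrithe} with parity projection with respect to the \(2\)-colour parity. Fix a diagram \( D \) of \( L \) whose associated abstract link diagram \( F_D \) is chequerboard colourable. By \Cref{3Thm:cbwrithe} there is a \(2\)-colouring \( \mathscr{C} \) of \( D \) for which every classical crossing is even with respect to \( \cp_{\mathscr{C}} \); consequently no crossing of \( D \) is virtualized in forming the projection, so \( \overline{D}^{\mathscr{C}} = D \). Let \( D' \) be a minimal crossing diagram of \( L \), i.e.\ one realising the minimal number of classical crossings among all diagrams of \( L \). Since \( L \) is chequerboard colourable it is \(2\)-colourable, and as the number of \(2\)-colourings of a virtual link is diagram-independent (\Cref{2Thm:bijection}), the diagram \( D' \) is \(2\)-colourable.

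Next I would transport the colouring to \( D' \). As \( D \) and \( D' \) are diagrams of the same virtual link they are connected by a finite sequence of virtual Reidemeister moves; fixing one such sequence, the colouring \( \mathscr{C} \) induces a \(2\)-colouring \( \mathscr{C}' \) of \( D' \), each move extending a given \(2\)-colouring (uniquely, though uniqueness is not needed here). Because \( \cp_{\mathscr{C}} \) is a bona fide parity (\Cref{2Prop:colourparity}, so that Axioms (0)--(3) of \Cref{2Def:parityaxioms} hold), the parity-projection compatibility recalled before the statement applies: \( \overline{D'}^{\mathscr{C}'} \) is related by a sequence of virtual Reidemeister moves to \( \overline{D}^{\mathscr{C}} = D \). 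Hence \( \overline{D'}^{\mathscr{C}'} \) is a diagram of \( L \).

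The final step is a crossing count. Let \( k \) denote the number of classical crossings of \( D' \) that are odd with respect to \( \cp_{\mathscr{C}'} \); by construction \( \overline{D'}^{\mathscr{C}'} \) has exactly \( k \) fewer classical crossings than \( D' \). Since \( D' \) realises the minimal classical crossing number of \( L \), and \( \overline{D'}^{\mathscr{C}'} \) is also a diagram of \( L \), we must have \( k = 0 \): every classical crossing of \( D' \) is even with respect to \( \cp_{\mathscr{C}'} \). Feeding \( D' \) together with \( \mathscr{C}' \) into the (ii)\(\Rightarrow\)(i) argument of \Cref{3Thm:cbwrithe} — shade on the right of red arcs and on the left of green arcs — yields a chequerboard colouring of \( F_{D'} \), so \( D' \) is chequerboard colourable, as desired.

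I expect the main obstacle to be the second paragraph: one must be confident that a \(2\)-colouring is carried across every type of virtual Reidemeister move and that the parity-projection statement genuinely holds in that generality, rather than only for the detour moves. This is exactly the content of Manturov's parity-projection theorem \cite[Section~3]{Manturov2010}, whose hypothesis is that one works with a parity; since \Cref{2Prop:colourparity} supplies precisely that (Axiom (3), governing Reidemeister III, being the crucial ingredient), no new work is required beyond quoting it.
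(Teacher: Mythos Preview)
Your proof is correct and follows essentially the same route as the paper's: pick a chequerboard colourable diagram, use \Cref{3Thm:cbwrithe} to get a \(2\)-colouring with all crossings even, transport it along a Reidemeister sequence to the minimal diagram, invoke parity projection to see the projected minimal diagram still represents \( L \), and conclude by minimality that no crossings were virtualized. The only cosmetic difference is that the paper names the minimal diagram \( D \) and the chequerboard one \( D' \), whereas you do the reverse.
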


\begin{proof}
Let \( D \) be a minimal crossing diagram of \( L \), and \( D ' \) a chequerboard colourable diagram of \( L \). As \( D ' \) is chequerboard colourable, \Cref{3Thm:cbwrithe} guarantees that there exists a \(2\)-colouring of it, \( \mathscr{C}' \), such that every classical crossing of \( D'\) is even with respect to \( \cp_{\mathscr{C}'} \). It follows that \( \overline{D'}^{\mathscr{C}'} = D' \).

The diagrams \( D \) and \( D' \) both represent the \( L \), so they are related via a sequence of virtual Reidemeister moves. Then the \( 2 \)-colouring \( \mathscr{C}' \) of \( D '\) induces a \( 2 \)-colouring of \( D \), denoted \( \mathscr{C} \). Further, the projections \( \overline{D}^{\mathscr{C}} \) and \( \overline{D'}^{\mathscr{C}'} \) are related via a sequence of virtual Reidemeister moves. But \( \overline{D'}^{\mathscr{C}'} = D' \), so that \( \overline{D}^{\mathscr{C}} \) is also a diagram of \( L \). If a classical crossing of \( D \) was converted to a virtual crossing when passing to \( \overline{D}^{\mathscr{C}} \), then \( \overline{D}^{\mathscr{C}} \) would be a diagram of \( L \) with fewer classical crossings than \( D \). But \( D \) is a minimal crossing diagram, and it follows that no classical crossings are converted to virtual crossings. Equivalently, every classical crossing of \( D \) is even with respect to \( \cp_{\mathscr{C}} \), and by \Cref{3Thm:cbwrithe} we conclude that \( D \) is a chequerboard colourable diagram.
\end{proof}

\section{Concordance invariance}\label{4Sec:concordance}

In this section we prove that the \(2\)-colour writhe is invariant under virtual link concordance. There are at least two reasons to suspect that this should be the case. First, as demonstrated in \Cref{3Prop:oddwrithe}, the \(2\)-colour writhe restricts to the odd writhe on virtual knots, which is itself invariant under virtual knot concordance \cite{Boden2018,Rushworth2017}. If the \( 2 \)-colour writhe is to be a satisfactory extension of the odd writhe to virtual links, it should replicate this behaviour.

Second, the pairwise linking numbers of a virtual link are elementary concordance invariants. In \Cref{3Prop:cbwrithe2} it is shown that the \(2\)-colour writhe of a chequerboard colourable virtual link may be determined from its pairwise linking numbers, so that the \(2\)-colour writhe is a concordance invariant on such virtual links.

To prove the concordance invariance of the \(2\)-colour writhe of an arbitrary \(2\)-colourable oriented virtual link we use \emph{doubled Lee homology}. In \Cref{4Subsec:dkh} we show that the \(2\)-colour writhe of a virtual link appears as the homological degree support of its doubled Lee homology. The concordance invariance of the \(2\)-colour writhe follows from this result and the concordance invariance of doubled Lee homology itself, as we describe in \Cref{4Subsec:invariance}. In \Cref{4Subsec:applications} we obtain applications of this concordance invariance.

\subsection{The \(2\)-colour writhe and doubled Lee homology}\label{4Subsec:dkh}
In this section we show that the \(2\)-colour writhe of a virtual link is equivalent to the homological degree support of its doubled Lee homology. We keep our description of doubled Lee homology to a minimum, referring the interested reader to \cite{Rushworth2017} for full details.

The \emph{doubled Lee homology} of an oriented virtual link \( L \), denoted \( \dkh ' ( L ) \), is a bigraded finitely-generated Abelian group. The two gradings are the \emph{homological grading}, denoted \( i \), and the \emph{quantum grading}, denoted \( j \) (both are \( \Z \)-gradings). We focus on the homological grading.

\begin{definition}
	\label{4Def:smoothing}
	Let \( D \) be a virtual link diagram. A classical crossing of \( D \) may be \emph{resolved} in one of two ways:
	\begin{center}
		\includegraphics[scale=0.75]{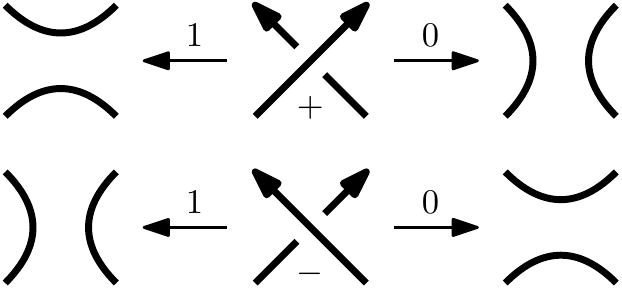}
	\end{center}
	We refer to each resolution as either the \(0\)- or the \(1\)-resolution, depending on the sign of the crossing. Arbitrarily resolving every classical crossing of \( D \) (leaving virtual crossings unchanged) yields a collection of immersed circles in the plane, known as a \emph{smoothing of \(D\)}.
\end{definition}

\begin{definition}\label{4Def:height}
	Let \( D \) be an oriented virtual link diagram with \( n_- \) negative classical crossings. Given \( \mathscr{S} \) a smoothing of \( D \) with exactly \( m \) \(1\)-resolutions, define the \emph{height} of \( \mathscr{S} \) as
		\begin{equation*}
		| \mathscr{S} | \coloneqq m - n_-.\qedhere
		\end{equation*}
\end{definition}

\begin{definition}
	\label{4Def:acs}
	A smoothing of a virtual link diagram \( D \) is \emph{alternately coloured} if its cycles are coloured exactly one of two colours in such a way that in a neighbourhood of each classical crossing the two incident arcs have different colours. A smoothing that can be coloured in such a way is known as an \emph{alternately colourable}.
\end{definition}
Examples of alternately coloured smoothings are given on the left of \Cref{4Fig:borrocolour}.
	
Let \( D \) be a diagram of an oriented virtual link \( L \). In \cite[Section \(3\)]{Rushworth2017} it is shown that to each alternately coloured smoothing of \( D \) one may associate two elements \( \sg^u, \sg^l \in \dkh ' ( L ) \), known as \emph{alternately coloured generators}. This name is justified by the fact that the set of all such objects generates \( \dkh ' ( L ) \) \cite[Theorem \( 3.5 \)]{Rushworth2017}. Define the homological grading on alternately coloured generators as
\begin{equation}\label{4Eq:hdegree}
	i ( \sg^{u/l} ) = | \mathscr{S} |
\end{equation}
where \( \sg^{u/l} \) are associated to the alternately coloured smoothing \( \mathscr{S} \).

The alternately coloured smoothings of \( D \) are directly related to its \( 2 \)-colourings.
\begin{proposition}\label{4Prop:2colourACG}
	Let \( D \) be a diagram of a \(2\)-colourable oriented virtual link \( L \). There is a bijection between the set of \(2\)-colourings of \( D \) and that of alternately coloured smoothings of \( D \).
\end{proposition}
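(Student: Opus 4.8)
The plan is to exhibit the bijection explicitly at the level of diagrams, by showing that a $2$-colouring of $S(D)$ determines, and is determined by, an alternately coloured smoothing of $D$ via a purely local procedure at each classical crossing. First I would fix a classical crossing $c$ of $D$ and recall that its sign determines which of the two resolutions is the $0$-resolution and which is the $1$-resolution (as in \Cref{4Def:smoothing}). Given a $2$-colouring $\mathscr{C}$ of $S(D)$, at $c$ the four incident arcs of $S(D)$ receive colours according to the configuration in \Cref{2Def:2-colouring}; that configuration is exactly ``opposite pairs of arcs share a colour''. Exactly one of the two resolutions of $c$ connects the two arcs of one colour to each other and the two arcs of the other colour to each other (the resolution that respects the colouring), while the other resolution connects arcs of different colours. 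I would send $\mathscr{C}$ to the smoothing $\mathscr{S}_{\mathscr{C}}$ obtained by choosing, at every $c$, the colour-respecting resolution. The cycles of $\mathscr{S}_{\mathscr{C}}$ then inherit a well-defined colour from $\mathscr{C}$ (since along any cycle the colour never changes, as we only ever pass through colour-respecting resolutions and through virtual crossings, which carry no colour data), and at each classical crossing the two incident arcs of $\mathscr{S}_{\mathscr{C}}$ carry the two different colours, so the colouring is alternate in the sense of \Cref{4Def:acs}. Hence $\mathscr{S}_{\mathscr{C}}$ is alternately colourable, and the inherited colouring makes it alternately coloured.

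Conversely, given an alternately coloured smoothing $\mathscr{S}$ of $D$, I would recover a colouring of the arcs of $S(D)$: each arc of $S(D)$ lies on a unique cycle of $\mathscr{S}$ away from the classical crossings, and I colour it with that cycle's colour. At each classical crossing $c$, the two arcs on one side share the colour of one incident cycle and the two arcs on the other side share the colour of the other cycle, and those two colours differ because $\mathscr{S}$ is alternately coloured; this is precisely the local picture required of a $2$-colouring in \Cref{2Def:2-colouring}. So this yields a $2$-colouring $\mathscr{C}_{\mathscr{S}}$ of $D$. The two assignments $\mathscr{C} \mapsto \mathscr{S}_{\mathscr{C}}$ and $\mathscr{S} \mapsto \mathscr{C}_{\mathscr{S}}$ are manifestly inverse to each other: starting from $\mathscr{C}$, forming $\mathscr{S}_{\mathscr{C}}$ and reading the colours back off its cycles returns the original arc-colouring, since the colour-respecting resolution keeps same-coloured arcs on the same cycle; and starting from $\mathscr{S}$, the colouring $\mathscr{C}_{\mathscr{S}}$ forces exactly the colour-respecting resolution at each crossing, so $\mathscr{S}_{\mathscr{C}_{\mathscr{S}}} = \mathscr{S}$.

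I do not expect a serious obstacle here; the statement is essentially a repackaging of the local combinatorics already visible in \Cref{2Def:2-colouring} and \Cref{4Def:acs}, and \Cref{2Fig:borroshadow} depicts the correspondence. The one point requiring a little care is the verification that the colour of a cycle of $\mathscr{S}_{\mathscr{C}}$ is globally well-defined, i.e.\ that following a cycle never forces a colour change: this is because the only events along a cycle are passages through colour-respecting resolutions of classical crossings (which by construction keep the colour constant) and through virtual crossings (which are unaffected and carry no colour), so constancy holds. Everything else is a routine check of the four local configurations at a classical crossing, split according to its sign, which I would relegate to a figure rather than write out. (One may note in passing that this bijection is the natural one that intertwines the homological-degree formula \eqref{4Eq:hdegree} with the $2$-colour writhe, which is the use to which it will be put.)
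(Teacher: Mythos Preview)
Your proposal is correct and follows essentially the same approach as the paper: both define the bijection via a local rule at each classical crossing (choose the resolution that keeps same-coloured arcs together), with the paper simply displaying this rule as a figure and declaring the bijection clear, while you spell out the inverse and the well-definedness checks in words. One minor slip: the figure you cite, \Cref{2Fig:borroshadow}, depicts the correspondence with Gauss-diagram colourings rather than with alternately coloured smoothings; the relevant illustration in the paper is \Cref{4Fig:borrocolour}.
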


\begin{proof}
	Given a \(2\)-colouring one may produce an alternately coloured smoothing, and vice versa, using the following rule:
		\begin{center}
				\includegraphics[scale=0.75]{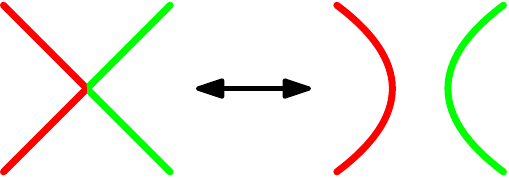}
		\end{center}
	That this is a bijection is clear.
\end{proof}
Examples of this bijection are given in \Cref{4Fig:borrocolour}.

\begin{figure}
	\includegraphics[scale=0.5]{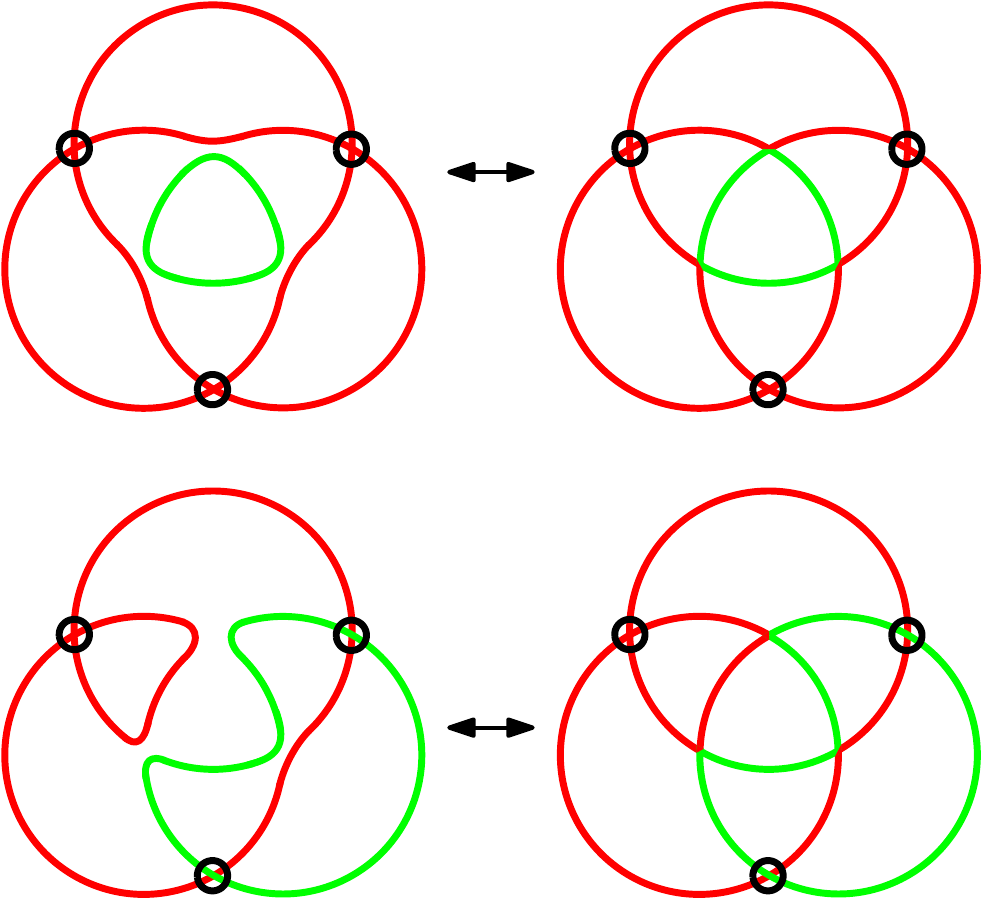}
	\caption{The bijection between alternately coloured smoothings and \(2\)-colourings.}
	\label{4Fig:borrocolour}
\end{figure}

The following is a generalization of \cite[Proposition \( 4.11 \)]{Rushworth2017}.

\begin{lemma}
	\label{4Lemma:writhedegree}
	Let \( D \) be a diagram of a \(2\)-colourable oriented virtual link \( L \). Let \( \mathscr{C} \) be a \( 2 \)-colouring of \( D \) and \( \mathscr{S} \) the alternately coloured smoothing of \( D \) associated to \( \mathscr{C} \). If \( \sg \) is either of the alternately coloured generators associated to \( \mathscr{S} \) then
		\begin{equation*}
			J_{\mathscr{C}} ( D ) = i ( \sg ).
		\end{equation*}
\end{lemma}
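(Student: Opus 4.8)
The plan is to prove the lemma by a crossing-by-crossing comparison of the two sides. By \Cref{4Eq:hdegree}, both alternately coloured generators attached to \( \mathscr{S} \) lie in homological grading \( \lvert \mathscr{S} \rvert \), so it suffices to show \( J_{\mathscr{C}}(D) = \lvert \mathscr{S} \rvert \). Write \( m \) for the number of \(1\)-resolutions occurring in \( \mathscr{S} \) and \( n_- \) for the number of negative classical crossings of \( D \), so that \( \lvert \mathscr{S} \rvert = m - n_- \) by \Cref{4Def:height}. Both \( m - n_- \) and \( J_{\mathscr{C}}(D) = \sum_{\cp_{\mathscr{C}}(c) = 1} \mathrm{sign}(c) \) (see \Cref{3Eq:writhe}) are sums over the classical crossings of \( D \) of a contribution depending only on the local data at the crossing: for \( m - n_- \), the crossing \( c \) contributes \( +1 \) if it is positive and \(1\)-resolved in \( \mathscr{S} \), \( -1 \) if it is negative and \(0\)-resolved, and \( 0 \) otherwise; for \( J_{\mathscr{C}}(D) \), it contributes \( \mathrm{sign}(c) \) if \( \cp_{\mathscr{C}}(c) = 1 \) and \( 0 \) if \( \cp_{\mathscr{C}}(c) = 0 \). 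Thus the lemma reduces to the local assertion that these two contributions agree at every classical crossing; unravelling the cases, this is equivalent to saying that \( c \) is odd with respect to \( \cp_{\mathscr{C}} \) if and only if the resolution taken at \( c \) in \( \mathscr{S} \) is the one opposite to the oriented resolution.

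To prove this local equivalence I would note that the three relevant pieces of information at \( c \) — whether \( c \) is positive or negative, whether \( \cp_{\mathscr{C}}(c) = 1 \), and which resolution \( \mathscr{S} \) takes at \( c \) — are all determined by the \(2\)-colouring and the orientation in a small neighbourhood of \( c \): the sign by the orientation together with the overstrand, the parity by \Cref{2Def:colourparity}, and the resolution by the bijection of \Cref{4Prop:2colourACG}. There are only finitely many such local configurations, and, up to the symmetries used in the proof of \Cref{2Prop:colourparity} (rotation of the diagram, reversal of orientation on a component, dualizing the colouring on a component), the relevant ones are the configurations appearing in \Cref{2Eq:oddeven} together with a choice of overstrand. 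In each case one reads off the oriented resolution, the resolution prescribed by \Cref{4Prop:2colourACG}, and the value of \( \cp_{\mathscr{C}} \), and checks that \( \mathscr{S} \) takes the non-oriented resolution exactly when the crossing is \( \cp_{\mathscr{C}} \)-odd. This is the local computation already performed in the proof of \cite[Proposition 4.11]{Rushworth2017}; the one new feature is that \( \mathscr{C} \) is now an arbitrary \(2\)-colouring of a link diagram rather than the colouring distinguished by a chequerboard colouring, but since every step is local to a single crossing the argument is unchanged — which is exactly what is meant by calling this lemma a generalization.

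The difficulty here is almost entirely bookkeeping rather than mathematical depth; the point at which care is needed is making the conventions consistent — in particular that the labelling of a resolution as the \(0\)- or the \(1\)-resolution depends on the sign of the crossing (\Cref{4Def:smoothing}), so that the oriented smoothing of \( D \) has height \(0\), and that the bijection of \Cref{4Prop:2colourACG} is read in the same direction throughout. Once the conventions are fixed the four cases fall out immediately, and summing the local identity over all classical crossings of \( D \) gives \( J_{\mathscr{C}}(D) = m - n_- = \lvert \mathscr{S} \rvert = i(\sg) \).
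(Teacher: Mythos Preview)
Your proposal is correct and follows essentially the same approach as the paper: both arguments reduce the identity to a local crossing-by-crossing analysis relating the sign of the crossing, its \( \cp_{\mathscr{C}} \)-parity, and the resolution taken in \( \mathscr{S} \), and then verify the four cases. The only cosmetic difference is that the paper packages the case check into aggregate counts \( n^e_\pm, n^o_\pm \) and a short algebraic rearrangement \( J_{\mathscr{C}}(D) = n^o_+ - n^o_- = n^o_+ + n^e_- - n_- = i(\sg) \), whereas you phrase it as matching per-crossing contributions to \( m - n_- \); the underlying content is identical.
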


\begin{proof}
	Let \( D \) have \( n^e_+ \) (\( n^e_- \)) even positive (negative) classical crossings, and \( n^o_+ \) (\( n^o_- \)) odd positive (negative) classical crossings, with respect to \( \cp_{\mathscr{C}} \). Then \( n_+ = n^e_+ + n^o_+ \) (\( n_- = n^e_- + n^o_- \)) is the total number of positive (negative) classical crossings.
	
	Given a \(2\)-colouring \( \mathscr{C} \), and its associated alternately coloured smoothing \( \mathscr{S} \), one of the following configurations must occur:
	\begin{center}
		\includegraphics[scale=0.65]{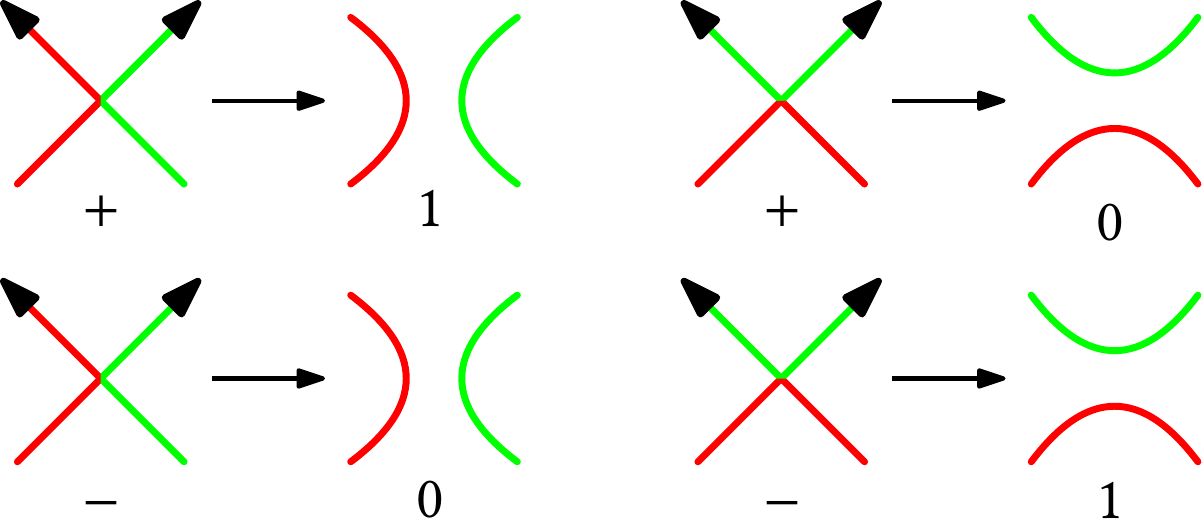}
	\end{center}
	where \( +, - \) denotes the sign of the crossing and \( 0,1 \) its resolution in \( \mathscr{S} \). That is, at a classical crossing of \( D \) exactly one of the following cases holds
	\begin{center}
		\begin{tabular}{c | c | c }
			sign  & parity & reso \\ \hline
			\(+\) & odd & \(1\) \\
			\(+\) & even & \( 0 \) \\
			\(-\) & odd & \(0\) \\
			\(-\) & even & \(1\)
		\end{tabular}
	\end{center}
	Using the table above, and recalling \Cref{4Eq:hdegree}, we have
	\begin{equation*}
		\begin{aligned}
			J_{\mathscr{C}} ( D ) &= n^o_+ - n^o_- \\
			&= n^o_+ - n^o_- + n^e_- - n^e_- \\
			&= n^o_+ + n^e_- - n_- \\
			&= i ( \sg )
		\end{aligned}
	\end{equation*}
	as required.
\end{proof}

We are now able to conclude this section by demonstrating that the \( 2 \)-colour writhe is equivalent to the homological support of doubled Lee homology.
\begin{theorem}\label{4Thm:writhedegree}
	Let \( L \) be a \(2\)-colourable oriented virtual link. Denote by \( \lbrace J^2 ( L ) \rbrace \) the set of entries of \( J^2 ( L ) \). Then \( \lbrace J^2 ( L ) \rbrace \) is the homological degree support of \( \dkh ' ( L ) \).
\end{theorem}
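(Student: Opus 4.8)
The plan is to string together the two correspondences already set up in this section. By \Cref{3Def:2colourwrithe} the entries of \( J^2(L) \) are the integers \( J_{\mathscr{C}_i}(D) \) ranging over a generating set of \(2\)-colourings of a diagram \( D \), and since a \(2\)-colouring and its global dual induce equivalent parities, the set of entries \( \lbrace J^2(L) \rbrace \) is simply \( \lbrace J_{\mathscr{C}}(D) : \mathscr{C} \text{ a } 2\text{-colouring of } D \rbrace \). Now \Cref{4Prop:2colourACG} identifies the set of \(2\)-colourings of \( D \) with the set of alternately coloured smoothings of \( D \), and under this identification \Cref{4Lemma:writhedegree} together with \Cref{4Eq:hdegree} gives \( J_{\mathscr{C}}(D) = i(\sg^u) = i(\sg^l) = |\mathscr{S}| \), where \( \mathscr{S} \) is the alternately coloured smoothing matched with \( \mathscr{C} \) and \( \sg^{u/l} \) its two alternately coloured generators. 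Chaining these equalities, \( \lbrace J^2(L) \rbrace \) is exactly the set of homological degrees occupied by alternately coloured generators of \( \dkh'(L) \), so the theorem reduces to showing that this set coincides with the homological degree support \( \lbrace i : \dkh'_i(L) \neq 0 \rbrace \).

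Next I would establish the two inclusions. For ``support \( \subseteq \lbrace J^2(L) \rbrace \)'' it suffices to invoke that the alternately coloured generators generate \( \dkh'(L) \) \cite[Theorem 3.5]{Rushworth2017}: each such generator carries a single well-defined homological degree, so if \( \dkh'_i(L) \neq 0 \) then some alternately coloured generator must sit in degree \( i \), whence \( i = i(\sg) = J_{\mathscr{C}}(D) \) for the corresponding \(2\)-colouring, i.e.\ \( i \in \lbrace J^2(L) \rbrace \). For the reverse inclusion one must verify that every homological degree containing an alternately coloured generator actually survives to nonzero homology; I would obtain this from the degree-by-degree description of doubled Lee homology in \cite[Section 3]{Rushworth2017}, in which the alternately coloured generators are shown to span \( \dkh'(L) \) without collapsing the degrees they occupy. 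This is precisely the link-level analogue of the virtual knot statement \cite[Proposition 4.11]{Rushworth2017} that \Cref{4Lemma:writhedegree} already generalizes. Combining the two inclusions yields \( \lbrace J^2(L) \rbrace = \lbrace i : \dkh'_i(L) \neq 0 \rbrace \).

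The only step that is not a formal consequence of \Cref{4Prop:2colourACG}, \Cref{4Lemma:writhedegree}, and \Cref{3Def:2colourwrithe} is the reverse inclusion, namely the non-triviality in \( \dkh'(L) \) of the alternately coloured generators in each degree they populate; this is where I expect the real content to lie, and I would dispatch it by citing the structural computation of \( \dkh'(L) \) from \cite{Rushworth2017} rather than reproving it here. Everything else is bookkeeping: once the non-vanishing is granted, the equality of sets in the statement follows immediately.
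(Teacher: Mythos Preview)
Your proposal is correct and follows essentially the same route as the paper: both inclusions are obtained by combining \Cref{4Prop:2colourACG} and \Cref{4Lemma:writhedegree} with the fact that the alternately coloured generators generate \( \dkh'(L) \). You are slightly more explicit than the paper in flagging that the inclusion \( \lbrace J^2(L) \rbrace \subseteq \text{support} \) requires each alternately coloured generator to be nonzero in homology, but this is exactly what is supplied by \cite[Theorem~3.5]{Rushworth2017}, which the paper is implicitly invoking when it treats the \( \sg^{u/l} \) as genuine (nonzero) generators of \( \dkh'(L) \).
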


\begin{proof}
	Let \( D \) be a diagram of \( L \). Pick a generating set of \(2\)-colourings and compute \( J^2 ( L ) \).
	
	Let \( k \) be an element of the homological degree support of \( \dkh ' ( L ) \) i.e.\ a homological degree in which \( \dkh ' ( L ) \) is non-trivial. Then there exists an alternately coloured generator, \( \sg \), such that \( i ( \sg ) = k \) (we have suppressed the superscript \( u/l \) as \( i ( \sg^u ) = i ( \sg^l ) \), c.f.\ \Cref{4Eq:hdegree}). Let \( \mathscr{S} \) be the alternately coloured smoothing of \( D \) associated to \( \sg \). By \Cref{4Prop:2colourACG} there is a \(2\)-colouring of \( D \) associated to \( \mathscr{S} \), denoted \( \mathscr{C} \). If \( \mathscr{C} \) is in the generating set of \( 2 \)-colourings then
	\begin{equation*}
		k = i ( \sg ) = J_{\mathscr{C}} ( D ) \in \lbrace J^2 ( L ) \rbrace
	\end{equation*}
	by \Cref{4Lemma:writhedegree}. If \( \mathscr{C} \) is not in the generating set, then \( \overline{\mathscr{C}} \) is, and
	\begin{equation*}
		k = i ( \sg ) = J_{\mathscr{C}} ( D ) = J_{\overline{\mathscr{C}}} ( D ) \in \lbrace J^2 ( L ) \rbrace.
	\end{equation*}
	
	Conversely, let \( \mathscr{C} \) be an element of the generating set. By \Cref{4Prop:2colourACG} there is an associated alternately coloured smoothing of \( D \), \( \mathscr{S} \). Let \( \sg \) be either of the alternately coloured generators of \( \dkh ' ( L ) \) associated to \( \mathscr{S} \), then by \Cref{4Lemma:writhedegree}
	\begin{equation*}
		J_{\mathscr{C}} ( D ) = i ( \sg )
	\end{equation*}
	so that \( J_{\mathscr{C}} ( D ) \) is an element of the homological degree support of \( \dkh ' ( L ) \).
\end{proof}

\subsection{Invariance}\label{4Subsec:invariance}
In this section we use \Cref{4Thm:writhedegree} and the concordance invariance of doubled Lee homology to show that the \(2\)-colour writhe is itself a concordance invariant. First, we recall the topological interpretation of virtual links and cobordisms between them.

\begin{definition}
	\label{4Def:vlink}
	A \emph{virtual link} is an equivalence class of embeddings \( \bigsqcup S^1 \hookrightarrow \Sigma_g \times I \) up to self-diffeomorphism of \( \Sigma_g \times I \), and handle (de)stabilisations of \( \Sigma_g \) such that the product of the attaching sphere with the \( I \) summand of \( \Sigma_{g} \times I \) is disjoint from the image of the embedding.
	
	A \emph{representative \( D \) of a virtual link} is a particular embedding \( D : \bigsqcup S^1 \hookrightarrow \Sigma_g \times I \). We abbreviate the notation to write \( D \hookrightarrow \Sigma_g \times I \).
\end{definition}

\begin{definition}[\cite{TuraevCob}]
	\label{4Def:cobordism}
	Let \( D \hookrightarrow \Sigma_g \times I \) and \( D' \hookrightarrow \Sigma_{g'} \times I \) be representatives of virtual links \( L \) and \( L ' \). We say that \( L \) and \( L ' \) are \emph{cobordant} if there exist a compact oriented surface \( S \) and an oriented \( 3 \)-manifold \( M \), such that \( S \hookrightarrow M \times I \), \( \partial S = D \sqcup D' \), and \( \partial M = \Sigma_g \sqcup \Sigma_{g'} \). We refer to \( S \) as a \emph{cobordism between \( L \) and \( L' \)}.
\end{definition}

\begin{definition}
	\label{4Def:concordance}
	Let \( S \hookrightarrow M \times I \) be a cobordism between links \( L \) and \( L' \), with \( | L | = | L' | \)\footnote{recall that \( | L | \) denotes the number of components of \( L \)}. We say that \( L \) and \( L' \) are \emph{concordant} if there exists a cobordism \( S \) between them that is a disjoint union of \( | L | \)  annuli, such that each annulus has a boundary component in \( L \) and another in \( L' \). We refer to such an \( S \) as a \emph{concordance between \( L \) and \( L' \)}.
\end{definition}

There exist algorithms for moving between this topological interpretation and the diagrammatic interpretation used in the previous sections of this paper; for example, see \cite[Section \( 1.1\)]{Boden2018}.

Importantly for our purposes, \(2\)-colourability of virtual links is preserved throughout concordances.
\begin{proposition}
	\label{4Prop:2colourconcordance}
	Let \( L \) and \( L' \) be concordant oriented virtual links. Then \( L \) is \( 2 \)-colourable if and only if \( L ' \) is \(2\)-colourable.
\end{proposition}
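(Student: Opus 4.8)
The plan is to reduce $2$-colourability to a mod-$2$ condition on the pairwise linking numbers, and then to appeal to the fact — recalled at the opening of this section — that the pairwise linking numbers of a virtual link are concordance invariants.

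First I would re-read \Cref{2Thm:bijection}: a diagram $D$ of $L$ is $2$-colourable exactly when its simple Gauss diagram has no degenerate core circle, i.e.\ when every core circle carries an even number of chord endpoints (\Cref{2Def:degen}). Fixing an ordering $D_1,\dots,D_n$ of the components of $D$, writing $s_i$ for the number of self-crossings of $D_i$ and $m_{ij}$ (for $i\ne j$) for the number of classical mixed crossings between $D_i$ and $D_j$, one sees that the $i$-th core circle carries $2s_i+\sum_{j\ne i}m_{ij}$ chord endpoints: each self-crossing of $D_i$ contributes two endpoints to that circle, each $D_i$--$D_j$ crossing contributes exactly one, and no other crossing contributes any. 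Since $m_{ij}\equiv lk(D_i,D_j)\pmod 2$, this yields the clean criterion
\[
L\ \text{is}\ 2\text{-colourable}\iff lk(D_i,D)\equiv 0\pmod 2\ \text{for every}\ i,
\]
where $lk(D_i,D):=\sum_{j\ne i}lk(D_i,D_j)$ as in \Cref{3Def:linkingnumber}. (This specialises correctly to \Cref{2Cor:vknotcolour}, the empty sum being even, and is manifestly diagram-independent, as it must be.)

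Next I would exploit the structure of a concordance. By \Cref{4Def:concordance} a concordance $S$ from $L$ to $L'$ is a disjoint union of $|L|=|L'|$ annuli, each with one boundary circle in $L$ and one in $L'$, so $S$ supplies a bijection $\sigma$ between the components of $L$ and those of $L'$. For any $i\ne j$, discarding from $S$ all annuli except the two that meet $D_i\cup D_j$ realises the $2$-component sublink $D_i\cup D_j$ as concordant to $D'_{\sigma(i)}\cup D'_{\sigma(j)}$, whence $lk(D_i,D_j)=lk(D'_{\sigma(i)},D'_{\sigma(j)})$ by the concordance invariance of the pairwise linking number. Summing over $j\ne i$ gives $lk(D_i,D)=lk(D'_{\sigma(i)},D')$ for every $i$, so the criterion of the previous paragraph holds for all components of $L$ if and only if it holds for all components of $L'$ — which is exactly the assertion.

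The main obstacle is really a temptation to be avoided: one should not try to run the argument directly along a movie of the concordance. That approach fails, because $2$-colourability is not preserved by an individual elementary cobordism move — a saddle merging two components whose core circles each carry an odd number of chord endpoints produces a single component whose core circle carries an even number, flipping the value of the criterion. The reason routing through $lk$ succeeds is precisely that $lk(D_i,D_j)$ is an honest concordance invariant and that the concordance itself furnishes the matching $\sigma$ of components. The only loose ends are then the routine verification that $lk(D_i,D_j)$ is unchanged by the generalised Reidemeister moves (Reidemeister~I alters only self-crossings, Reidemeister~II alters two crossings of the same pair with cancelling signs or two self-crossings, Reidemeister~III and the virtual moves preserve all pairwise counts), and the concordance invariance of pairwise linking numbers, which is the elementary input flagged at the start of \Cref{4Sec:concordance}.
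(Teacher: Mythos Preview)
Your proposal is correct and follows essentially the same route as the paper: both reduce \(2\)-colourability to the parity of \(lk(D_i,D\setminus D_i)\) via the degenerate-circle criterion of \Cref{2Thm:bijection}, and then invoke concordance invariance of the pairwise linking numbers. The only difference is packaging: the paper isolates the linking-number invariance as a separate lemma (\Cref{4Lem:linkingnumber}, proved via Alexander duality in a homology disc), whereas you take it as the elementary input announced at the start of \Cref{4Sec:concordance} and reduce to the two-component case by restricting the concordance.
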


\Cref{4Prop:2colourconcordance} can be proved using the following fact.

\begin{lemma}\label{4Lem:linkingnumber}
Let \( L \) and \( L ' \) be concordant oriented virtual links, with \( L = L_1 \cup L_2 \cup \cdots \cup L_n \) and \( L' = L^{\prime}_1 \cup L^{\prime}_2 \cup \cdots \cup L^{\prime}_n \). As \( L \) and \( L ' \) are concordant there is a bijection between the components of \( L \) and those of \( L ' \); without loss of generality let \( L_i \) be associated to \( L^{\prime}_i \). Then
\begin{equation*}
lk ( L_i , L_j ) = lk ( L^{\prime}_i, L^{\prime}_j )
\end{equation*}
for all \( 1 \leq i, j \leq n \).
\end{lemma}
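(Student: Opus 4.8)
The plan is to argue diagrammatically, tracking the signed count of mixed crossings through a movie presentation of the concordance. Since \( L \) and \( L ' \) are concordant, the dictionary between the topological picture of \Cref{4Def:concordance} and diagrams (see \cite[Section \(1.1\)]{Boden2018}, \cite{TuraevCob}) provides a finite sequence of virtual link diagrams \( D = D^{(0)}, D^{(1)}, \ldots , D^{(N)} = D ' \) in which consecutive diagrams differ by a single classical Reidemeister move, virtual Reidemeister move, birth of a disjoint trivial circle, death of a disjoint trivial circle, or saddle move, and whose trace is a disjoint union of \( n \) annuli, the \( k \)-th joining \( L_k \) to \( L'_k \). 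In particular every circle of every \( D^{(t)} \) lies on exactly one of these annuli; I would write \( g_t ( \gamma ) \in \lbrace 1, \ldots , n \rbrace \) for the index of the annulus carrying a circle \( \gamma \) of \( D^{(t)} \), normalised so that \( g_0 \) and \( g_N \) restrict to the identity on the components of \( L \) and of \( L ' \).

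Fixing \( i \neq j \), I would then set, for each \( t \),
\begin{equation*}
	\ell_t \coloneqq \sum_c \text{sign}(c),
\end{equation*}
the sum ranging over the classical crossings \( c \) of \( D^{(t)} \) with one strand on a circle labelled \( i \) by \( g_t \) and the other strand on a circle labelled \( j \). By the normalisation, \( \ell_0 = lk ( L_i, L_j ) \) and \( \ell_N = lk ( L'_i, L'_j ) \) in the sense of \Cref{3Def:linkingnumber}, so it is enough to show \( \ell_{t+1} = \ell_t \) for each elementary move. The Reidemeister and virtual cases are routine: virtual moves neither create, destroy, nor re-sign a classical crossing; a Reidemeister I move involves a self-crossing, which is never counted; a Reidemeister II move creates or destroys two crossings of opposite sign on a single pair of strands, which are either both counted and cancel or both uncounted; and a Reidemeister III move merely permutes three crossings, preserving their signs and the labels of the strands through them. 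A birth or death introduces or removes a trivial circle bounding a disk disjoint from the rest of the diagram, hence meeting no classical crossing, so \( \ell \) is unaffected.

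The case that needs the annulus hypothesis, and the one I expect to be the only real point, is the saddle move. A saddle merging (or splitting) two circles lying on different annuli of the trace would connect those two surface components, violating the disjointness of the annuli; so the two arcs surgered by any saddle lie on circles carrying a common annulus label, as does the output, and since the saddle disk contains only those two arcs and no classical crossing, \( \ell \) is again unchanged. Hence \( \ell \) is constant along the movie, giving \( lk ( L_i, L_j ) = \ell_0 = \ell_N = lk ( L'_i, L'_j ) \); the diagonal case \( i = j \) is trivial, as there are no mixed crossings within a single component. The remaining ingredient, which I would cite rather than reprove, is exactly the standard passage from the topological definition of concordance to such a movie satisfying the annulus condition.
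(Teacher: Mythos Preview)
Your argument is correct, but it follows a genuinely different route from the paper's. The paper works topologically: it passes to the quotient \( B_m = (\Sigma_g \times I)/(\Sigma_g \times \{m\}) \), which is a homology ball, applies Alexander duality to identify \( lk_m(D_i,D_j) \) with the image of \( [D_i] \in H_1(B_m \setminus D_j) \) under the duality isomorphism, and then appeals to the fact that a concordance induces homotopies of the complements of the individual components, hence preserves these homology classes. Your approach, by contrast, is entirely combinatorial: you take a movie presentation of the concordance and verify directly that the signed count of \( i \)--\( j \) crossings is unchanged by each elementary move, with the disjoint-annuli hypothesis entering precisely to force every saddle to be intra-component. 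Your route is more elementary and more self-contained at the level of the crossing count; the paper's route explains \emph{why} linking numbers are concordance invariants (they are homology classes in the complement) and ports the classical homological proof wholesale to the virtual setting. Both defer one step to the literature --- you the existence of a suitable movie, the paper the details of the homotopy-of-complements argument --- so neither is strictly more complete than the other.
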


\begin{proof}
	The proof of this fact in the case of classical links is elementary, and it may be adapted to the virtual case as follows. By abuse of notation let \( D = D_1 \cup D_2 \cup \cdots \cup D_n \hookrightarrow \Sigma_g \times I \) be a representative of \( L \). Define
	\begin{equation*}
	B_m = \faktor{\Sigma_g \times I}{\Sigma_g \times \lbrace m \rbrace}, ~ m \in \lbrace 0, 1 \rbrace
	\end{equation*}
	Consider the inclusion \( D \hookrightarrow B_m \); as \( B_m \) is a homology disc Alexander duality induces the isomorphism
	\begin{equation*}
	\phi_{m,j} : H_{1} \left( B_m \setminus D_j \right) \rightarrow \Z
	\end{equation*}
	for all \( j \in \lbrace 1, \ldots, n \rbrace \).
	
	Next, project \( D \) to \( \Sigma_g \). This yields a link diagram on \( \Sigma_g \) (without virtual crossings). By abuse of notation we denote this diagram by \( D = D_1 \cup D_2 \cup \cdots \cup D_n \) also. Define the quantities
	\begin{equation*}
	\begin{aligned}
	lk_0 \left( D_i , D_j \right) &= \sum_{D_i ~\text{under}~ D_j ~ \text{at}~ c} \text{sign} ( c ) \\
	lk_1 \left( D_i , D_j \right) &= \sum_{D_i ~\text{over}~ D_j ~ \text{at}~ c} \text{sign} ( c )
	\end{aligned}
	\end{equation*}
	It follows that
	\begin{equation}\label{4Eq:ident}
	lk_m \left( D_i , D_j \right) = \phi_{1-m,j} \left( [ D_i ] \right)
	\end{equation}
	for \( m \in \lbrace 0 , 1 \rbrace \). Notice that \( lk ( D_i, D_j ) = lk_0 \left( D_i , D_j \right) + lk_1 \left( D_i , D_j \right) \). Thus the linking number of the components \( D_i \) and \( D_j \) is a function of \( [D_i] \in H_1 \left( B_m \setminus D_j \right) \), in direct analogy to the classical case. With this in place, we may repeat the classical proof: a concordance between links may be used to exhibit homotopies of the complements of link components. These homotopies preserve homology classes, so that by \Cref{4Eq:ident} the linking numbers are preserved under concordance.
\end{proof}

\begin{proof}[Proof of \Cref{4Prop:2colourconcordance}]
	Let \( D = D_1 \cup D_2 \cup \cdots \cup D_n \) be a diagram of \( L \), and \( G ( D ) \) its simple Gauss diagram as defined in \Cref{2Def:gauss}. Recall that a circle of \( G ( D ) \) is known as degenerate if it contains an odd number of chord endpoints (see \Cref{2Def:degen}). The components of \( D \) are in bijection with the circles of \( G ( D ) \); in what follows we shall refer to a component \( D_i \) as degenerate if the associated circle of \( G ( D ) \) is degenerate. It then follows from \Cref{2Thm:bijection} that \( L \) is \( 2 \)-colourable if and only \( D_i \) is not degenerate, for all \( 1 \leq i \leq n \). 

	It is readily verified that \( D_i \) is degenerate if and only if \( lk ( D_i, D \setminus D_i ) \) is odd. Noticing that
	\begin{equation*}
	lk ( D_i, D \setminus D_i ) = \sum_{j=1, j \neq i}^n lk ( D_i, D_j )
	\end{equation*}
	and employing \Cref{4Lem:linkingnumber}, we see that if \( L \) is concordant to \( L ' \) then \( L \) has a degenerate component if and only if \( L ' \) has. Thus \( L \) is \(2\)-colourable if and only if \( L ' \) is.
\end{proof}

A powerful feature of doubled Lee homology is its \emph{functoriality}: to a cobordism \( S \) between virtual links \( L \) and \( L' \), the doubled Lee package assigns a map \( \phi_S : \dkh ' ( L ) \rightarrow \dkh ' ( L ') \). If \( S \) is a concordance then \( \phi_S \) is an isomorphism.

\begin{proposition}
	\label{4Prop:isomorphism}
	Let \( S \) be a concordance between virtual links \( L \) and \( L' \). The map \( \phi_S : \dkh ' ( L ) \rightarrow \dkh ' ( L ') \) is a bigraded isomorphism of homological degree \(0\).
\end{proposition}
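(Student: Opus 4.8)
The plan is to read off both assertions — that \( \phi_S \) is an isomorphism and that it has homological degree \( 0 \) — from the construction of \( \phi_S \) in \cite{Rushworth2017} as a composite of the maps assigned to elementary cobordisms, together with the description of \( \dkh' ( L ) \) in terms of alternately coloured generators.

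First I would fix a Morse function on \( S \), presenting it as a movie built from births of small unknotted components, deaths, and saddle moves, each inducing a map on \( \dkh' \) by the functoriality recalled above. By \Cref{4Def:concordance} the surface \( S \) is a disjoint union of \( |L| \) annuli, each joining one component of \( L \) to one of \( L' \), so the movie may be organised so that the sub-movie tracing out each annulus is independent of the others, and along each annulus (of Euler characteristic \( 0 \)) the births and deaths together balance the saddles. The crux is the behaviour of these elementary maps on the canonical generators of \( \dkh' \) — the alternately coloured generators \( \sg^u, \sg^l \) of \cite{Rushworth2017}, which by \Cref{4Prop:2colourACG} are indexed by the \( 2 \)-colourings (equivalently, the orientations) of the link. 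A birth or a death acts on the relevant canonical generator as multiplication by a unit, while a saddle sends a canonical generator either to a canonical generator or to \( 0 \), according as the two strands it merges or splits are coherently oriented. In a concordance every saddle lies inside a single annulus and is coherent, so tracking a fixed canonical generator of \( \dkh' ( L ) \) through the movie shows that \( \phi_S \) carries it to a unit multiple of the corresponding canonical generator of \( \dkh' ( L' ) \). Since the canonical generators generate \( \dkh' \), this makes \( \phi_S \) surjective; running the identical argument on the reversed concordance \( \overline S \) (reflect the \( I \)-coordinate), and using functoriality \( \phi_{\overline S} \circ \phi_S = \phi_{\overline S \circ S} \) together with finite generation, shows \( \phi_S \) is an isomorphism. (If \cite{Rushworth2017} already records the concordance-invariance of \( \dkh' \), this step may instead simply be quoted from there.)

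For the homological degree, recall that the homological grading on \( \dkh' \) is that of \Cref{4Def:height}, read off from the cube of resolutions after the \( n_- \) shift. Each of the elementary cobordism maps is homogeneous of homological degree \( 0 \) by construction — births, deaths and saddles commute with the resolution differentials without changing heights — so their composite \( \phi_S \) has homological degree \( 0 \); the same bookkeeping gives quantum degree \( \chi ( S ) = 0 \), so \( \phi_S \) is in fact of bidegree \( ( 0, 0 ) \), though only the homological statement is needed in the sequel. The step I expect to be the main obstacle is the generator-level bookkeeping in the \emph{doubled} setting: one must verify that the alternately coloured generators of \cite{Rushworth2017} play exactly the role the canonical Lee generators play classically — in particular that for a genus-\( 0 \) cobordism piece the ``maps to \( 0 \)'' alternative never arises — which is a careful but routine unwinding of the chain-level definitions of \cite{Rushworth2017}, facilitated by \Cref{4Prop:2colourACG} and \Cref{4Lemma:writhedegree}, which pin down which generator a given canonical class lies in.
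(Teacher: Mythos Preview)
Your approach is essentially the one the paper takes: decompose \( S \) into elementary pieces, track the alternately coloured generators through the movie, and use the reverse concordance to upgrade non-vanishing to an isomorphism. Two points deserve attention.

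First, the assertion that ``in a concordance every saddle lies inside a single annulus and is coherent'' is where the real content hides, and lying in a single annulus is not by itself enough. In the classical Lee setting one can argue coherence by re-orienting the annulus to match any chosen orientation of \( L \); but in the doubled setting the canonical generators are indexed by \(2\)-colourings, and your parenthetical ``(equivalently, the orientations)'' is not correct --- \(2\)-colourings are not orientations, and the classical re-orientation trick does not transfer verbatim. What one needs is that a \(2\)-colouring of \( L \) extends across every intermediate level of the concordance, i.e.\ that no degenerate components are created along the way. The paper's proof isolates exactly this point and defers it to \cite[Section~3.2]{Rushworth2017}. You correctly flag this as the main obstacle in your final paragraph, but it is the substance of the argument rather than a routine unwinding: it is precisely where the doubled theory diverges from the classical one, and it needs the specific lemma on preservation of \(2\)-colourability under concordance.

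Second, your remark on homological degree is fine and matches the paper (``degree \(0\) by construction'').
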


\begin{proof}
	The case in which one of \( L \) and \( L' \) is a knot is given in \cite[Theorem \( 3.21 \)]{Rushworth2017}, and the argument readily extends to the case in which both \( L \) and \( L' \) are links. Here we outline the proof, directing the reader to \cite[Theorem \( 3.21 \)]{Rushworth2017} for full details.

	Recall from \Cref{4Subsec:dkh} that the bigraded group \( \dkh ' ( L ) \) has generators that are in bijection with the \(2\)-colourings of \( L \). Further, the homological degree of each generator is equal to the contribution to the \(2\)-colour write of \( L \) of the associated \(2\)-colouring. Thus, given a concordance \( S \) from \( L \) to \( L '\), one can study the map \( \phi_S \) by analysing the effects of \( S \) on the set of \(2\)-colourings of \( L \). In \cite[Section \( 3.2 \)]{Rushworth2017} it is demonstrated that passing through a concordance cannot create degenerate link components; thus every link that appears in a concordance from \( L \) to \( L ' \) is \(2\)-colourable. It follows that the map \( \phi_S \) is non-zero. By considering the reverse concordance - that is, the concordance from \( L ' \) to \( L \) obtained by traversing \( S \) in reverse - one can prove that \( \phi_S \) is an isomorphism. The map \( \phi_S \) is of homological degree \( 0 \) by construction, as described in \cite[Section \( 3.2 \)]{Rushworth2017}.
\end{proof}

\begin{corollary}
	\label{4Cor:writheinvariance}
	Let \( L \) and \( L' \) be concordant \(2\)-colourable oriented virtual links. Then \( J^2 ( L ) = J^2 ( L' ) \).
\end{corollary}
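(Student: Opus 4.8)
The plan is to deduce this from the translation of the \(2\)-colour writhe into homological data in \Cref{4Thm:writhedegree}, combined with the concordance invariance of doubled Lee homology in \Cref{4Prop:isomorphism}. Before doing anything I would check that the statement is even well-posed: by \Cref{4Prop:2colourconcordance} the link \( L' \) is \(2\)-colourable, so \( J^2 ( L' ) \) is defined; and since \( L \) and \( L' \) are concordant a concordance between them is a disjoint union of \( | L | \) annuli, so \( | L | = | L' | \) and \( J^2 ( L ) \) and \( J^2 ( L' ) \) are strings of the same length \( 2^{|L|-1} \). With this in hand the task is to match them.

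The core of the argument is short. Fix a concordance \( S \) from \( L \) to \( L' \) and apply \Cref{4Prop:isomorphism}: the induced map \( \phi_S : \dkh ' ( L ) \to \dkh ' ( L' ) \) is a bigraded isomorphism of homological degree \( 0 \). Because it is a degree-\(0\) isomorphism, \( \phi_S \) restricts to isomorphisms between the pieces of \( \dkh ' ( L ) \) and \( \dkh ' ( L' ) \) in each fixed bidegree; in particular the two groups have the same homological degree support. Feeding this into \Cref{4Thm:writhedegree} for each link yields \( \lbrace J^2 ( L ) \rbrace = \lbrace J^2 ( L' ) \rbrace \) as sets of integers. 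This already suffices for all of the applications drawn in \Cref{4Subsec:applications} (sliceness, \( (\pm) \)-amphichirality, chequerboard colourability), since each of those only refers to which integers — in particular whether \( 0 \), or a nonzero integer — occur among the entries.

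The delicate point, and the one I expect to be the main obstacle, is promoting this equality of \emph{sets} to equality of the strings \( J^2 ( L ) \) and \( J^2 ( L' ) \) up to permutation, i.e.\ matching multiplicities. For this I would refine \Cref{4Thm:writhedegree}: instead of recording only the occupied homological degrees, track for each integer \( k \) the number of alternately coloured generators \( \sg \) with \( i ( \sg ) = k \). By \Cref{4Prop:2colourACG} and \Cref{4Lemma:writhedegree} this count equals, up to the uniform factors coming from pairing globally dual \(2\)-colourings and from the \( u \)/\( l \) generators attached to a single smoothing, the multiplicity of \( k \) in \( J^2 ( L ) \). Since \( \phi_S \) identifies \( \dkh ' ( L ) \) and \( \dkh ' ( L' ) \) as bigraded groups, it remains to see that this multiplicity is an invariant of the bigraded group — and the subtlety is precisely that the alternately coloured generators only generate \( \dkh ' ( L ) \) rather than forming a basis. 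Resolving this requires either the explicit description of the relations among them from \cite{Rushworth2017}, or a canonical \(2\)-colouring-indexed direct-sum decomposition of \( \dkh ' ( L ) \) into summands supported in a single homological degree, so that the homological-degree-wise ranks of \( \dkh ' ( L ) \) recover the multiplicities; once that bookkeeping is done, \( J^2 ( L ) = J^2 ( L' ) \) follows at once from \( \dkh ' ( L ) \cong \dkh ' ( L' ) \) as bigraded groups.
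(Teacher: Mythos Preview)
Your proposal is correct and follows essentially the same route as the paper: both deduce the corollary directly from \Cref{4Thm:writhedegree} and \Cref{4Prop:isomorphism}, with the set-level equality immediate and the multiplicity statement requiring the constructive bijection between \(2\)-colourings and alternately coloured generators. You are in fact more explicit than the paper about the subtlety with multiplicities (generators versus basis), which the paper dispatches by appealing to ``the constructive nature of the proof of \Cref{4Thm:writhedegree}''.
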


\begin{proof}
	This is a direct result of \Cref{4Thm:writhedegree} and \Cref{4Prop:isomorphism}. We suffice ourselves by observing that not only are the entries of \( J^2 ( L ) \) concordance invariants, but their multiplicities are also, owing to the constructive nature of the proof of \Cref{4Thm:writhedegree}.
\end{proof}

\subsection{Applications}\label{4Subsec:applications}
The concordance invariance of the \(2\)-colour writhe established above yields a number of applications. First, we are able to obstruct the sliceness of a \(2\)-colourable oriented virtual link.

\begin{definition}
	\label{4Def:strongslice} A virtual link \( L \) is \emph{slice} if it is concordant to the unlink of \( | L | \) components.
\end{definition}

The following is a direct consequence of \Cref{4Cor:writheinvariance}.

\begin{corollary}\label{4Cor:strongslice}
	Let \( L \) be a \(2\)-colourable oriented virtual link. If \( J^2 ( L ) \) has a non-zero entry then \( L \) is not slice.
\end{corollary}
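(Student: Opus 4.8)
The plan is to argue by contradiction using the concordance invariance of the \(2\)-colour writhe established in \Cref{4Cor:writheinvariance}. Suppose that \( L \) is slice, so that \( L \) is concordant to the unlink \( U \) on \( |L| \) components. First I would note that \( U \) is \(2\)-colourable: this is immediate, since the standard crossingless diagram of \( U \) has a shadow with no flat crossings, so any assignment of one of the two colours to each component is a \(2\)-colouring (alternatively, invoke \Cref{4Prop:2colourconcordance}, as \( L \) is \(2\)-colourable by hypothesis). Hence \( J^2 ( U ) \) is defined, and \Cref{4Cor:writheinvariance} yields \( J^2 ( L ) = J^2 ( U ) \).

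Next I would compute \( J^2 ( U ) \) directly from the crossingless diagram \( D_U \) of \( U \). Since \( D_U \) has no classical crossings, for every \(2\)-colouring \( \mathscr{C} \) of \( D_U \) the sum in \Cref{3Eq:writhe} defining \( J_{\mathscr{C}} ( D_U ) \) is empty, so \( J_{\mathscr{C}} ( D_U ) = 0 \). Therefore \( J^2 ( U ) = ( 0, 0, \ldots, 0 ) \), a string of zeros of length \( 2^{|L|-1} \). Combining this with the previous paragraph gives \( J^2 ( L ) = ( 0, \ldots, 0 ) \), contradicting the hypothesis that \( J^2 ( L ) \) has a non-zero entry.

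There is essentially no hard step here; the only point requiring a moment's care is confirming that the unlink lies in the domain of the invariant, i.e.\ that it is \(2\)-colourable, which is dealt with above. One could alternatively bypass the explicit crossingless computation by appealing to \Cref{3Prop:cbwrithe2}: the unlink is classical, hence chequerboard colourable, so its \(2\)-colour writhe is determined by its pairwise linking numbers, all of which vanish, again forcing \( J^2 ( U ) = ( 0, \ldots, 0 ) \).
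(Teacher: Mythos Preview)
Your argument is correct and is precisely the one the paper has in mind: the paper simply states that \Cref{4Cor:strongslice} is ``a direct consequence of \Cref{4Cor:writheinvariance}'', and your proof spells out that direct consequence (concordance invariance plus the trivial computation \( J^2 ( U ) = (0,\ldots,0) \)). There is nothing to add.
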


\Cref{3Fig:example1} provides an example of an oriented virtual link for which the linking number does not obstruct sliceness, but the \(2\)-colour writhe does.

Combining \Cref{4Cor:writheinvariance} with \Cref{3Prop:amphichiral} and \Cref{3Thm:cbwrithe} we can obstruct (\( \pm \))-amphichirality and chequerboard colourability within a concordance class.

\begin{corollary}
	\label{4Cor:amphiconc}
	Let \( L \) be a \(2\)-colourable oriented virtual link such that \( J^2 ( L ) \neq -J^2 ( L ) \). Then \( L \) is not concordant to a (\( \pm \))-amphichiral virtual link.
\end{corollary}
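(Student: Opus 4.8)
The plan is to argue by contradiction, combining the amphichirality obstruction of \Cref{3Prop:amphichiral} with the concordance invariance of \Cref{4Cor:writheinvariance}. Suppose, for contradiction, that \( L \) is concordant to a \( (\pm) \)-amphichiral virtual link \( L' \). The first step is to check that every invariant in sight is actually defined on \( L' \): since \( L \) is \(2\)-colourable and \( L' \) is concordant to it, \Cref{4Prop:2colourconcordance} guarantees that \( L' \) is \(2\)-colourable as well, so that \( J^2 ( L' ) \) is defined and, moreover, \Cref{3Prop:amphichiral} applies to \( L' \).

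With this in place the argument reduces to a short chain of equalities of strings (considered up to permutation). By \Cref{4Cor:writheinvariance} we have \( J^2 ( L ) = J^2 ( L' ) \). Since \( L' \) is \( (\pm) \)-amphichiral, \Cref{3Prop:amphichiral} gives \( J^2 ( L' ) = - J^2 ( L' ) \). Chaining these, \( J^2 ( L ) = J^2 ( L' ) = - J^2 ( L' ) = - J^2 ( L ) \), which contradicts the hypothesis \( J^2 ( L ) \neq - J^2 ( L ) \). Hence no such \( L' \) exists, and \( L \) is not concordant to a \( (\pm) \)-amphichiral virtual link.

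The only point needing any care — and hence the "main obstacle", such as it is — is the bookkeeping around orientations and \(2\)-colourability. \Cref{3Prop:amphichiral} is stated for \(2\)-colourable \emph{oriented} virtual links, so one must be sure that the hypothetical \( (\pm) \)-amphichiral link to which \( L \) is concordant carries a compatible orientation (which it does, as virtual link concordance is a relation on oriented links, cf.\ \Cref{4Def:concordance}) and is \(2\)-colourable (which is precisely \Cref{4Prop:2colourconcordance}). Once these are noted the corollary is immediate; in particular it requires no geometric input beyond the results already established.
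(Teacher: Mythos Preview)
Your proof is correct and follows exactly the approach the paper indicates: the corollary is stated immediately after the sentence ``Combining \Cref{4Cor:writheinvariance} with \Cref{3Prop:amphichiral} \ldots\ we can obstruct (\( \pm \))-amphichirality \ldots\ within a concordance class'', and no further argument is given. Your explicit invocation of \Cref{4Prop:2colourconcordance} to ensure \( L' \) is \(2\)-colourable is a sensible piece of bookkeeping that the paper leaves implicit.
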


\begin{corollary}
	\label{4Cor:cbconc}
	Let \( L \) be a \(2\)-colourable oriented virtual link such that \( 0 \) does not appear in \( J^2 ( L ) \). Then \( L \) is not concordant to a chequerboard colourable virtual link.
\end{corollary}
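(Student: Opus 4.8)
The plan is to argue by contradiction, using nothing more than the concordance invariance of the \(2\)-colour writhe together with the characterisation of chequerboard colourability in \Cref{3Thm:cbwrithe}. Suppose that \( L \) were concordant to a chequerboard colourable virtual link \( L' \). Since \( L' \) is chequerboard colourable it is in particular \(2\)-colourable (this is the implication (i)\(\Rightarrow\)(ii) of \Cref{3Thm:cbwrithe}, and is also noted explicitly just after it), so \( L \) and \( L' \) are concordant \(2\)-colourable oriented virtual links and \Cref{4Cor:writheinvariance} applies to give \( J^2 ( L ) = J^2 ( L' ) \) as strings up to permutation.

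Next I would extract the relevant feature of \( J^2 ( L' ) \). Because \( L' \) is chequerboard colourable, \Cref{3Thm:cbwrithe}(ii) furnishes a diagram \( D' \) of \( L' \) and a \(2\)-colouring \( \mathscr{C} \) of it such that every classical crossing of \( D' \) is even with respect to \( \cp_{\mathscr{C}} \). Then the defining sum in \Cref{3Eq:writhe} is empty, so \( J_{\mathscr{C}} ( D' ) = 0 \), and hence \( 0 \) occurs as an entry of \( J^2 ( L' ) \). The only bookkeeping point is that \( \mathscr{C} \) need not itself lie in a chosen generating set; but \( J^2 \) is independent of the choice of generating set and the parities associated to globally dual \(2\)-colourings are equivalent (as recorded in \Cref{3Def:2colourwrithe}), so either \( \mathscr{C} \) or \( \overline{\mathscr{C}} \) contributes the entry \( 0 \) in any case.

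Combining the two steps, \( 0 \) appears in \( J^2 ( L ) = J^2 ( L' ) \), contradicting the hypothesis that \( 0 \) does not appear in \( J^2 ( L ) \); hence no such \( L' \) exists, which is the assertion of \Cref{4Cor:cbconc}. I do not expect a genuine obstacle here: the statement is a formal consequence of \Cref{4Cor:writheinvariance} and \Cref{3Thm:cbwrithe}, and the mildest care needed is the remark that chequerboard colourability forces \(2\)-colourability, so that the concordance invariance result is applicable to the hypothetical concordant link.
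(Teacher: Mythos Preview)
Your argument is correct and is exactly the approach the paper takes: the corollary is stated without proof as an immediate consequence of \Cref{4Cor:writheinvariance} together with \Cref{3Thm:cbwrithe} (the latter guaranteeing that a chequerboard colourable link has \(0\) in its \(2\)-colour writhe). Your extra care about the generating set is harmless and matches the remark following \Cref{3Thm:cbwrithe}.
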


\Cref{3Fig:example1} provides an example of an oriented virtual link that is obstructed from being concordant to a chequerboard colourable link by the \(2\)-colour writhe, and is therefore not concordant to a classical link.

\section{Comparison with other theories}\label{5Sec:comparison}

In this section we compare the \(2\)-colour parity to other parity theories for virtual links. In \Cref{5Subsec:naive,5Subsec:IP} we show that \(2\)-colour parity yields a strictly stronger invariant (on the set of \(2\)-colourable virtual links) than both the na\"\i ve parity and the IP parity. In \Cref{5Subsec:other} we show that the \(2\)-colour parity descends to a parity on free links, and compare it to other related topics.

\subsection{The na\"\i ve parity}\label{5Subsec:naive}
One can define a parity of virtual links by simply declaring that all self-crossings are even, and all mixed crossings odd. We refer to this parity as the \emph{na\"\i ve parity}.

\begin{definition}\label{5Def:naive}
	Let \( D \) be a virtual link diagram. Declare every classical self-crossing as \emph{N-even}, and every classical mixed crossing as \emph{N-odd}.
\end{definition}

It is easy to see that this declaration satisfies the axioms of \Cref{2Def:parityaxioms}, and yields a parity.

\begin{definition}\label{5Def:nwrithe}
	Let \( D \) be a diagram of an oriented virtual link \( L \). Define the \emph{na\"\i ve writhe} of \( D \), denoted \( N ( D ) \), as
	\begin{equation*}
		N ( D ) \coloneqq \sum_{c ~\text{N-odd}} \text{sign} ( c ).
	\end{equation*}
	That is, it is the sum of the N-odd crossings. As the na\"\i ve parity satisfies the axioms of \Cref{2Def:parityaxioms}, this quantity is an invariant of \( L \), and we define the na\"\i ve writhe of \( L \) to be \( N ( L ) \coloneqq N ( D ) \).
\end{definition}

Let \( M ( D ) \) denote the set of mixed crossings of \( D \). As a classical crossing of \( D \) is N-odd if and only if it is a mixed crossing, we observe that
\begin{equation*}
	N ( D ) = \sum_{c \in M(D) } \text{sign} ( c ).
\end{equation*}
Combining this with
\begin{equation*}
	\sum_{c \in C^i \cap C^j } \text{sign} ( c ) = lk ( D_i , D_j )
\end{equation*}
and
\begin{equation*}
	M ( D ) = \bigcup_{i} \left( \bigcup_{j>i} C^i \cap C^j  \right)
\end{equation*}
(where \( C^i \) is as defined on \cpageref{3Eq:setdefinitions}), we obtain
\begin{equation}\label{5Eq:nwrithe1}
	N ( D ) = \sum_{c \in M(D) } \text{sign} ( c ) = \sum_{i} \left( \sum_{j>i} lk ( D_i , D_j ) \right).
\end{equation}
That is, the na\"\i ve writhe is the sum of the pairwise linking numbers. It is demonstrated in \Cref{3Subsec:cbable} that the \(2\)-colour writhe is strictly stronger than the pairwise linking numbers, motivating the following theorem.

\begin{theorem}\label{5Thm:2colournwrithe}
	The pair \( \left( J^2 ( L ), J^2_S ( L ) \right) \) forms a strictly stronger invariant than \( N ( L ) \) on the set of \(2\)-colourable oriented virtual links (for \( J^2_S ( L ) \) given in \Cref{3Def:2colourselfwrithe}).
\end{theorem}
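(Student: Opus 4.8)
The plan is to establish the two halves of \textbf{strictly stronger} separately: first that the na\"\i ve writhe $N(L)$ is recovered from the pair $\bigl(J^2(L),J^2_S(L)\bigr)$ by an explicit formula, so that the pair is at least as strong as $N$; and then to exhibit two $2$-colourable oriented virtual links on which $N$ agrees but the pair does not.

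For the first half, I would fix a diagram $D$ of $L$ with $n=|L|$ components and a generating set $\mathcal{G}$ of $2$-colourings. For any $2$-colouring $\mathscr{C}$, split the sum defining $J_{\mathscr{C}}(D)$ into self- and mixed-crossing contributions; by \Cref{2Prop:selfcrossings} the self-crossing contribution equals $J^2_S(L)$ regardless of $\mathscr{C}$. For a mixed crossing $c$ between components $D_i$ and $D_j$ with $i\neq j$, recall (this is exactly the observation used to derive \Cref{3Eq:1dual}) that dualizing a $2$-colouring on the $i$-th or on the $j$-th component reverses $\cp_{\mathscr{C}}(c)$, while dualizing on any other component leaves it unchanged; hence among the $2^{n}$ $2$-colourings of $D$, exactly $2^{\,n-1}$ make $c$ odd. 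I would then average $J_{\mathscr{C}}(D)$ over all $2^{n}$ $2$-colourings of $D$ --- which equals the average over $\mathcal{G}$, since globally dual $2$-colourings induce the same parity at every crossing and so give the same writhe --- and combine the counting just made with \Cref{5Eq:nwrithe1} to obtain
\[
	\frac{1}{2^{\,n-1}}\bigl(\text{sum of the entries of }J^2(L)\bigr)\;=\;J^2_S(L)+\tfrac{1}{2}N(L).
\]
Rearranging gives $N(L)=2^{\,2-n}\bigl(\text{sum of the entries of }J^2(L)\bigr)-2J^2_S(L)$, an identity valid for all $n\geq 1$ (trivially so when $n=1$, both sides being $0$). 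Since the entry-sum of $J^2(L)$ is permutation-invariant, this exhibits $N(L)$ as a function of $\bigl(J^2(L),J^2_S(L)\bigr)$.

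For the second half, I would take $L$ to be the two-component oriented virtual link of \Cref{3Fig:example1}, for which \Cref{3Ex:example1} gives $J^2(L)=(2,-2)$ and whose pairwise linking number vanishes, so that $N(L)=0$ by \Cref{5Eq:nwrithe1}. Comparing with the two-component unlink $U$, which has $J^2(U)=(0,0)$, $J^2_S(U)=0$ and $N(U)=0$, we find $N(L)=N(U)$ while $J^2(L)\neq J^2(U)$, so $\bigl(J^2(L),J^2_S(L)\bigr)\neq\bigl(J^2(U),J^2_S(U)\bigr)$. Thus the pair distinguishes links that $N$ cannot, which, combined with the first half, completes the proof.

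The only step with genuine content is the counting --- that each mixed crossing is odd with respect to exactly half of the $2^{|L|}$ $2$-colourings. Everything else is either quoted (\Cref{2Prop:selfcrossings}, \Cref{5Eq:nwrithe1}, \Cref{3Ex:example1}) or routine bookkeeping, the main thing to watch being the passage between summing over a generating set and over all $2$-colourings (the factor of $2$) and the harmless degenerate case $|L|=1$. I do not anticipate a substantive obstacle.
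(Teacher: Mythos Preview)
Your argument is correct and in fact establishes something sharper than the paper's own proof. The paper interprets ``at least as strong'' in the weak sense of showing that if the pair $(J^2(L),J^2_S(L))$ is \emph{trivial} then $N(L)=0$; it deduces this by fixing a base colouring $\mathscr{C}$, using the hypotheses $J_{\mathscr{C}}=J_i=J_{i,j}=J^2_S=0$ together with \Cref{3Eq:2dual} to show first that $lk(D_i,D_j)=2\Sigma(C^i\cap C^j\cap\mathscr{C}^o)$ and then that the sum of these over all pairs vanishes. You instead prove the standard (and stronger) statement that $N(L)$ is a \emph{function} of the pair, via the clean averaging identity
\[
N(L)=2^{\,2-n}\bigl(\textstyle\sum_{\mathscr{C}\in\mathcal{G}}J_{\mathscr{C}}(D)\bigr)-2J^2_S(L),
\]
whose proof needs only the single observation that each mixed crossing is odd for exactly half of the $2^{|L|}$ colourings. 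This is both shorter and more informative: it yields the paper's implication as the special case where the right-hand side vanishes, and it avoids the apparatus of \Cref{3Subsec:complexity}. For the strictness clause you use the link of \Cref{3Fig:example1} against the unlink, whereas the paper uses the $4$-component chequerboard-colourable link of \Cref{5Fig:cbdetected}; both choices are valid, the paper's being selected so that the same example serves again in \Cref{5Thm:2colourstronger}.
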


\begin{proof}
	We show that if \(L\) is a \(2\)-colourable oriented virtual link such that \( \left( J^2 ( L ), J^2_S ( L ) \right) \) is trivial, then \( N ( L ) \) is trivial also, and that there exist virtual links detected by \( \left( J^2 ( L ), J^2_S ( L ) \right) \) but not \( N ( L ) \).
	
	First, let \( L \) be a virtual link such that \( \left( J^2 ( L ), J^2_S ( L ) \right) \) is trivial. That is, given a diagram \( D = D_1 \cup D_2 \cup \cdots \cup D_n \), then \( J^2_S ( D ) = 0 \) and \( J_{\mathscr{C}} ( D ) = 0 \) for all \(2\)-colourings \( \mathscr{C} \) of \( D \). Fixing an arbitrary base colouring \( \mathscr{C} \), and using the notation of \Cref{3Subsec:complexity}, we obtain
	\begin{equation*}
		\begin{aligned}
			J_{i,j} ( D ) &= J_i (D) + J_j (D) + 2 \left( \Sigma \left( C^i \cap C^j \cap \mathscr{C}^o \right) \right) - 2 \left( \Sigma \left( C^i \cap C^j \cap \mathscr{C}^e \right) \right) - J_{\mathscr{C}} (D) \\
			&= 2 \left( \Sigma \left( C^i \cap C^j \cap \mathscr{C}^o \right) \right) - 2 \left( \Sigma \left( C^i \cap C^j \cap \mathscr{C}^e \right) \right) \\
			&= 0
		\end{aligned}
	\end{equation*}
	as \( J^2 ( L ) \) is trivial. Therefore
	\begin{equation*}
		\Sigma \left( C^i \cap C^j \cap \mathscr{C}^o \right) = \Sigma \left( C^i \cap C^j \cap \mathscr{C}^e \right)
	\end{equation*}
	and
	\begin{equation}\label{5Eq:linkingnumber}
		\begin{aligned}
			lk ( D_i, D_j ) &= \Sigma \left( C^i \cap C^j \cap \mathscr{C}^o \right) + \Sigma \left( C^i \cap C^j \cap \mathscr{C}^e \right) \\
			&= 2 \left( \Sigma \left( C^i \cap C^j \cap \mathscr{C}^o \right) \right).
		\end{aligned}
	\end{equation}
	Next, consider the following expression for \( J_{\mathscr{C}} \)
	\begin{equation*}
		\begin{aligned}
			J_{\mathscr{C}} &= \sum_{\substack{c ~\text{mixed crossing} \\ \cp_{\mathscr{C}} ( c ) = 1 }} \text{sign} ( c ) + J^2_{S} ( D ) \\
			&= 0
		\end{aligned}
	\end{equation*}
	from which we observe
	\begin{equation*}
		\sum_{\substack{c ~\text{mixed crossing} \\ \cp_{\mathscr{C}} ( c ) = 1 }} \text{sign} ( c ) = - J^2_{S} ( D ) = 0.
	\end{equation*}
	By definition
	\begin{equation*}
		\sum_{\substack{c ~\text{mixed crossing} \\ \cp_{\mathscr{C}} ( c ) = 1 }} \text{sign} ( c ) =  \sum_i \left(\sum_{j > i} \left( \Sigma \left( C^i \cap C^j \cap \mathscr{C}^o \right) \right) \right)
	\end{equation*}
	so that
	\begin{equation*}
		\begin{aligned}
			0 &=  \sum_i \left(\sum_{j > i} \left( \Sigma \left( C^i \cap C^j \cap \mathscr{C}^o \right) \right) \right) \\
			&= \frac{1}{2} \sum_i \left( \sum_{ j > i } lk ( D_i , D_j ) \right)\\
			&=\frac{1}{2} N ( L )
		\end{aligned}
	\end{equation*}
	by \Cref{5Eq:nwrithe1,,5Eq:linkingnumber}, and we may conclude that \( N (  L ) = 0 \).
	
	A virtual link that is detected by the \(2\)-colour writhe but not by the na\"\i ve writhe is given in \Cref{5Fig:cbdetected}.
\end{proof}

\begin{figure}
	\includegraphics[scale=0.65]{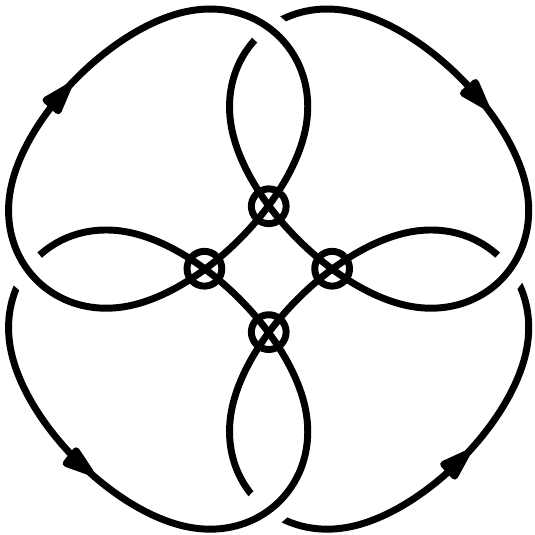}
	\caption{A virtual link detected by the \( 2 \)-colour writhe, but not detected by the na\"\i ve writhe or the IP writhe.}
	\label{5Fig:cbdetected}
\end{figure}

\subsection{The Im-Park parity}\label{5Subsec:IP}
It is possible for the pairwise linking numbers of a virtual link to be odd, unlike those of a classical link. In this section we shall consider the class of virtual links with even pairwise linking numbers. The set of such links is a subset of \(2\)-colourable links.

\begin{proposition}\label{5Prop:evensubset}
Let \( L \) be an oriented virtual link with even pairwise linking numbers. Then \( L \) is \(2\)-colourable.
\end{proposition}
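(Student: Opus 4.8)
The plan is to invoke the combinatorial characterisation of 2-colourability from \Cref{2Thm:bijection}: $L$ is 2-colourable exactly when the simple Gauss diagram of some (equivalently any) diagram of $L$ has no \emph{degenerate} core circle, i.e.\ no core circle carrying an odd number of chord endpoints. So I would fix a diagram $D = D_1 \cup \cdots \cup D_n$ of $L$, form $G(D)$ as in \Cref{2Def:gauss}, and count the chord endpoints on each core circle.

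First I would sort these endpoints according to the type of crossing producing them. Following the construction in \Cref{2Def:gauss}, a self-crossing of $D_i$ yields a chord with both endpoints on the $i$-th core circle, while a mixed crossing between $D_i$ and $D_j$ (with $j \neq i$) yields a chord with precisely one endpoint on the $i$-th core circle (and one on the $j$-th). Hence the number of chord endpoints on the $i$-th core circle equals $2 s_i + \sum_{j \neq i} m_{ij}$, where $s_i$ is the number of self-crossings of $D_i$ and $m_{ij}$ is the number of mixed crossings between $D_i$ and $D_j$. Reducing mod $2$, this count is congruent to $\sum_{j \neq i} m_{ij}$.

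Next I would observe that $m_{ij} \equiv lk(D_i, D_j) \pmod 2$, since $lk(D_i, D_j)$ is a signed sum over exactly those $m_{ij}$ crossings and $n_+ + n_- \equiv n_+ - n_- \pmod 2$. Thus the $i$-th core circle carries an odd number of chord endpoints if and only if $\sum_{j \neq i} lk(D_i, D_j)$ is odd --- this is precisely the observation already recorded in the proof of \Cref{4Prop:2colourconcordance}. If all pairwise linking numbers of $L$ are even, every such sum is even, so no core circle of $G(D)$ is degenerate, and \Cref{2Thm:bijection} yields that $L$ is 2-colourable. I do not expect any real obstacle here; the only point requiring a moment's care is the endpoint count above, which is immediate from the definition of the simple Gauss diagram.
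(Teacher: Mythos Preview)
Your proof is correct and follows essentially the same approach as the paper: both invoke \Cref{2Thm:bijection} and argue that even pairwise linking numbers force each core circle of $G(D)$ to carry an even number of chord endpoints. Your version is simply more explicit, spelling out the endpoint count $2s_i + \sum_{j\neq i} m_{ij}$ and the congruence $m_{ij} \equiv lk(D_i,D_j) \pmod 2$ that the paper's terse proof leaves implicit.
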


\begin{proof}
Let \( D \) be a diagram of \( L \). Recall that \( L \) is \(2\)-colourable if and only if \( G ( D ) \) has no degenerate circles (see \Cref{2Thm:bijection}). As \( L \) has even pairwise linking numbers, a circle of \( G ( D ) \) must have an even number of chord endpoints. It follows that \( G ( D ) \) possesses no degenerate circles and \( L \) is \(2\)-colourable.
\end{proof}
However, there exist \(2\)-colourable virtual links which do not have even pairwise linking numbers; an example is given in \Cref{2Fig:shadowandgauss}.

Im and Park defined a parity on the restricted class of virtual links with even pairwise linking numbers \cite{ImPark2013}. We refer to this parity as the \emph{Im-Park parity} (IP parity). We begin by summarising its definition and that of the associated writhe invariant.

\begin{definition}[IP parity \cite{ImPark2013}]\label{5Def:ipparity}
	Let \( D = D_1 \cup D_2 \cup \cdots \cup D_n \) be an oriented virtual link diagram, with components \( D_1, \ldots, D_n \), such that
	\begin{equation*}
	lk ( D_i, D_j ) \in 2 \Z
	\end{equation*}
	for all \( i, j \in \lbrace 1, \ldots , n \rbrace \).
	
	A self-crossing of \( D \) is \emph{IP-even} if the associated chord of \( G ( D ) \) encloses an even number of chord endpoints (this is well-defined as \( D \) has even pairwise linking numbers). Otherwise it is \emph{IP-odd}.
	
	The Gauss code \( G ( D ) \) is further investigated, to determine whether or not it satisfies a certain condition. There are two possible outcomes:
		\begin{enumerate}
			\item every mixed classical crossing of \( D \) is declared as IP-odd
			\item every mixed classical crossing of \( D \) is declared as IP-even
		\end{enumerate}
	Which case holds depends on whether or not \( G ( D ) \) satisfies the particular condition. For full details see \cite[Definition \(2.1\)]{ImPark2013}.
\end{definition}

Im and Park show that this definition yields a parity on the set of virtual links with even pairwise linking numbers. To be precise, they erroneously claim that it is a parity on arbitrary virtual links. However, they present an example, given in \cite[Figure 7]{ImPark2013}, demonstrating that the construction does not satisfy the third parity axiom (as given in \Cref{2Def:parityaxioms}). This error can be sourced to the proof of Lemma 2.4 on page \(4 \) of \cite{ImPark2013}: in it, Im and Park assume that if \( D \) and \(D'\) are virtual link diagrams related by a Reidemeister \(3 \) move, then \( D \) and \( D'\) have even pairwise linking numbers. This is false in general, as \cite[Figure 7]{ImPark2013} shows. As a consequence, \Cref{5Def:ipparity} yields a parity on the class of virtual links with even pairwise linking numbers.

The associated writhe invariant is defined as follows.

\begin{definition}[IP-writhe]\label{5Def:ipwrithe}
	Let \( D \) be a diagram of an oriented virtual link \( L \) with even pairwise linking numbers. Define the \emph{IP-writhe} as
		\begin{equation*}
			IP ( D ) \coloneqq \sum_{ c ~\text{IP-odd}} \text{sign}( c ).
		\end{equation*}
	The invariance of \( IP(D) \) under the virtual Reidemeister moves follows from the fact that the IP parity satisfies the parity axioms on such links, and we may define the IP-writhe of \( L \) as \( IP ( L ) \coloneqq IP ( D ) \).
\end{definition}

We shall also need the following restricted invariant.

\begin{definition}\label{5Def:selfipwrithe}
	Let \( D \) be a diagram of an oriented virtual link \( L \) with even pairwise linking numbers. Define the following quantity
		\begin{equation*}
			IP_S ( D ) \coloneqq \sum_{\substack{c ~\text{IP-odd} \\ \text{self-crossing}}} \text{sign}( c ).
		\end{equation*}
	Again, the invariance of \( IP(D) \) under the virtual Reidemeister moves follows from the fact that the IP parity satisfies the parity axioms on such links, and we may define \( IP_S ( L ) \coloneqq IP_S ( D ) \).
\end{definition}

It follows from \Cref{5Def:ipparity} that a mixed classical crossing between the components \( D_i \) and \( D_j \) is IP-odd if and only if all such crossings are IP-odd. From this observation we obtain
\begin{equation}\label{5Eq:ipwrithe2}
	IP ( D ) = \left\lbrace \begin{matrix}
				\sum_{i} \left( \sum_{ j > i } lk ( D_i , D_j ) \right) + IP_S ( D ), ~\text{or} \\
				IP_S ( D )
				\end{matrix}\right.
\end{equation}

As a consequence of \Cref{5Eq:ipwrithe2} the contribution of the mixed crossings to the IP writhe is determined by the pairwise linking numbers of the argument virtual link. As in the case of the na\"\i ve parity, this motivates the following theorem.

\begin{theorem}\label{5Thm:2colourstronger}
	The pair \( \left( J^2 ( L ), J^2_S ( L ) \right) \) forms a strictly stronger invariant than the pair \( \left( IP ( L ), IP_S ( L ) \right) \) on the set of oriented virtual links with even pairwise linking numbers (for \( J^2_S ( L ) \) given in \Cref{3Def:2colourselfwrithe}).
\end{theorem}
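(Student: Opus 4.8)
The plan is to follow the template of \Cref{5Thm:2colournwrithe}. I would show two things: (i) if \( ( J^2 ( L ), J^2_S ( L ) ) \) is trivial then \( ( IP ( L ), IP_S ( L ) ) \) is trivial; and (ii) there is a virtual link with even pairwise linking numbers that is detected by \( ( J^2 ( L ), J^2_S ( L ) ) \) but not by \( ( IP ( L ), IP_S ( L ) ) \). For (ii) the link of \Cref{5Fig:cbdetected} should serve, exactly as in the proof of \Cref{5Thm:2colournwrithe}.

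The key lemma I would prove first is that \( J^2_S ( D ) = IP_S ( D ) \) for any diagram \( D \) of a virtual link with even pairwise linking numbers. Fix a self-crossing \( c \) on a component \( D_i \) and any \(2\)-colouring \( \mathscr{C} \) (one exists by \Cref{5Prop:evensubset}); by \Cref{2Prop:selfcrossings} the value \( \cp_{\mathscr{C}} ( c ) \) is independent of \( \mathscr{C} \), which matches the colouring-free nature of IP-parity on self-crossings. The chord of \( G ( D ) \) associated to \( c \) has both endpoints on the core circle of \( D_i \); since \( L \) has even pairwise linking numbers this core circle carries an even number of chord endpoints, so the two arcs cut out by \( c \)'s endpoints contain the same number of chord endpoints modulo \(2\). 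Traversing \( D_i \) from an outgoing arc of \( c \) back to the incoming arc realises one of these two arcs, and by \Cref{2Def:altgausscolour} together with \Cref{2Thm:bijection} the colour of \( \mathscr{C} \) flips exactly at the classical crossings passed. The argument of \Cref{2Prop:Gparityrelation} then carries over verbatim and shows that \( c \) is odd with respect to \( \cp_{\mathscr{C}} \) if and only if its chord encloses an odd number of chord endpoints, i.e.\ if and only if \( c \) is IP-odd (see \Cref{5Def:ipparity}). Since the sign of \( c \) is intrinsic, summing over all self-crossings gives \( J^2_S ( D ) = IP_S ( D ) \).

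Granting the lemma, part (i) is short. Triviality of \( ( J^2 ( L ), J^2_S ( L ) ) \) gives \( J^2_S ( L ) = 0 \), hence \( IP_S ( L ) = 0 \). The proof of \Cref{5Thm:2colournwrithe} already shows that triviality of \( ( J^2 ( L ), J^2_S ( L ) ) \) forces \( N ( L ) = \sum_{i} \sum_{j>i} lk ( D_i, D_j ) = 0 \). By \Cref{5Eq:ipwrithe2}, \( IP ( D ) \) is either \( \sum_{i} \sum_{j>i} lk ( D_i, D_j ) + IP_S ( D ) \) or \( IP_S ( D ) \); in both cases \( IP ( L ) = 0 \). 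Hence \( ( IP ( L ), IP_S ( L ) ) \) is trivial, and I would then invoke \Cref{5Fig:cbdetected} for part (ii).

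I expect the main obstacle to be the careful verification of the key lemma --- in particular pinning down that the IP-parity notion ``the chord of \( c \) encloses an even number of chord endpoints'' lines up with the local colour--orientation configurations in \Cref{2Eq:oddeven} that give \( \cp_{\mathscr{C}} ( c ) = 0 \), and checking that the enclosed-endpoint count has a well-defined parity. The latter is precisely where the even-pairwise-linking-number hypothesis enters, since it is what guarantees an even number of chord endpoints on each core circle. Everything after the lemma is routine bookkeeping with \Cref{5Eq:ipwrithe2} and the proof of \Cref{5Thm:2colournwrithe}.
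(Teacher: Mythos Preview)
Your proposal is correct and follows essentially the same route as the paper. Both arguments hinge on the observation that, for links with even pairwise linking numbers, a self-crossing is odd with respect to \( \cp_{\mathscr{C}} \) if and only if it is IP-odd (via the argument of \Cref{2Prop:Gparityrelation}), giving \( IP_S = J^2_S \); both then reduce the mixed-crossing contribution to the vanishing of \( \sum_{i}\sum_{j>i} lk(D_i,D_j) \) established in the proof of \Cref{5Thm:2colournwrithe}, and conclude via \Cref{5Eq:ipwrithe2}. Your presentation is slightly cleaner in that you isolate \( IP_S = J^2_S \) as a standalone lemma and use the even-pairwise-linking hypothesis directly, whereas the paper re-derives \( lk(D_i,D_j)\in 2\Z \) (redundantly, since it is already a hypothesis) before invoking the same self-crossing argument; for part~(ii) the paper additionally cites the Hopf link of \Cref{5Fig:hopf} together with Im and Park's result that \( (IP,IP_S) \) vanishes on chequerboard colourable links, but your single example \Cref{5Fig:cbdetected} suffices.
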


\begin{proof}
	We show that if \(L\) is an oriented virtual link with even pairwise linking numbers such that \( \left( J^2 ( L ), J^2_S ( L ) \right) \) is trivial, then \( \left( IP ( L ), IP_S ( L ) \right) \) is trivial also, and that there exist virtual links detected by \( \left( J^2 ( L ), J^2_S ( L ) \right) \) but not \( \left( IP ( L ), IP_S ( L ) \right) \).
	
	Given a diagram \( D = D_1 \cup D_2 \cup \cdots \cup D_n \), repeat the first part of the proof of \Cref{5Thm:2colournwrithe} to obtain
	\begin{equation}\label{5Eq:linkingnumber2}
		\begin{aligned}
			lk ( D_i, D_j ) &= \Sigma \left( C^i \cap C^j \cap \mathscr{C}^o \right) + \Sigma \left( C^i \cap C^j \cap \mathscr{C}^e \right) \\
							&= 2 \left( \Sigma \left( C^i \cap C^j \cap \mathscr{C}^o \right) \right).
		\end{aligned}
	\end{equation}
	It follows that \( lk (D_i, D_j) \in 2 \Z \). As a consequence, given a \(2\)-colouring \( \mathscr{C} \) a self-crossing of \( D \) is odd with respect to \( \cp_{\mathscr{C}} \) if and only if it is IP-odd (this may be seen by repeating the argument given in the proof of \Cref{2Prop:Gparityrelation}). This implies that
	\begin{equation}\label{5Eq:selfwrithes}
		IP_S ( L ) = J^2_{S} ( L ) = 0
	\end{equation}
	(recall that \( J^2_{S} ( L ) \) is trivial by hypothesis).
	
	Next, fix a \(2\)-colouring \( \mathscr{C} \) and repeat the second part of the proof of \Cref{5Thm:2colournwrithe} to obtain
	\begin{equation*}
		\sum_{j>i} \left( \Sigma \left( C^i \cap C^j \cap \mathscr{C}^o \right) \right) = 0
	\end{equation*}
	from which we observe
	\begin{equation*}
		\begin{aligned}
			0 &= \sum_i \left(\sum_{j > i} \left( \Sigma \left( C^i \cap C^j \cap \mathscr{C}^o \right) \right) \right) \\
			&= \frac{1}{2} \sum_i \left( \sum_{j > i} lk ( D_i , D_j ) \right)\\
			&=\frac{1}{2} IP ( L )
		\end{aligned}
	\end{equation*}
	by \Cref{5Eq:ipwrithe2,,5Eq:linkingnumber2,,5Eq:selfwrithes} (notice that the argument proceeds irrespective of which case holds in \Cref{5Eq:ipwrithe2}), and we may conclude that \( IP (  L ) = 0 \).
	
	Im and Park show that \( \left( IP ( L ), IP_S ( L ) \right) \) is trivial on chequerboard colourable virtual links \cite[Proposition \(2.11\)]{ImPark2013} (they use the term \emph{normal} for such links). As all classical links are chequerboard colourable, it follows that \( \left( IP ( L ), IP_S ( L ) \right) \) is trivial on classical links also. We conclude by observing that there are many classical links with non-trivial \( \left( J^2 ( L ), J^2_S ( L ) \right) \); a simple example is given in \Cref{5Fig:hopf}. The \(4\)-component link depicted in \Cref{5Fig:cbdetected} is also an example of a virtual link detected by the \(2\)-colour writhe but not the IP writhe.
\end{proof}

In fact, there are virtual links that are not detected by the IP writhe, but detected by the na\"\i ve writhe: again \Cref{5Fig:hopf} provides an example. It is also important to note that while the \(2\)-colour writhe of a chequerboard colourable virtual link can be determined from the pairwise linking numbers, the \(2\)-colour writhe remains strictly stronger than both the na\"\i ve writhe and the IP writhe on this class of links. This may be seen by noticing that the link depicted in \Cref{5Fig:cbdetected} is chequerboard colourable, and is detected only by the \(2\)-colour writhe.

\begin{figure}
	\includegraphics[scale=1]{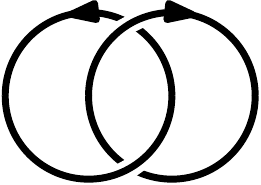}
	\caption{A classical Hopf link that is detected by the \(2\)-colour writhe and the na\"\i ve writhe, but is not detected by the IP writhe.}
	\label{5Fig:hopf}
\end{figure}

The IP parity has been used to construct a number of polynomial invariants of virtual links \cite{ImLeeLee2014,ImKimPark2017}. In light of \Cref{5Thm:2colourstronger} it is reasonable to suspect that polynomial invariants constructed using the \(2\)-colour parity will be stronger than those constructed using the IP parity.

\subsection{Other parities}\label{5Subsec:other}
We conclude by comparing the \(2\)-colour parity to other constructions related to parity theories of virtual links.

\subsubsection{Manturov's parity of free knots}\label{5Subsub:manturov}
Free links are a drastic simplification of virtual links, obtained by considering virtual link diagrams up to the virtual Reidemeister moves, classical crossing changes, and a further move known as flanking. Alternatively, free links may be defined in terms of the simple Gauss diagrams employed in \Cref{2Sec:definition}. The reader familiar with free links may therefore suspect that the \(2\)-colour parity descends to a parity of free links. Here we confirm these suspicions.

\begin{definition}
	\label{5Def:freelinks} A \emph{free link} is an equivalence class of simple Gauss diagrams, up to the following local moves:
		\begin{equation*}
		\renewcommand*{\arraystretch}{2}
			\begin{matrix}
				\includegraphics[scale=0.65]{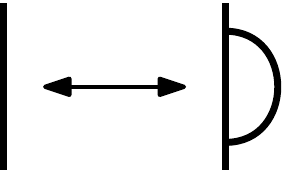} & \includegraphics[scale=0.65]{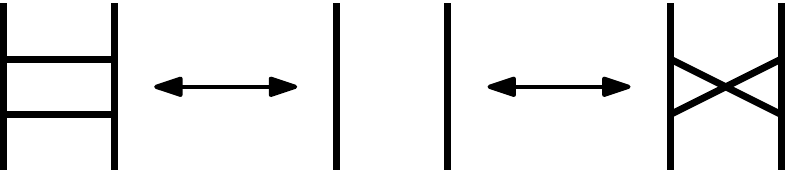} \\
				\includegraphics[scale=0.65]{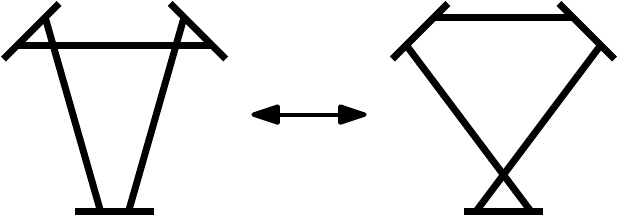} & \includegraphics[scale=0.65]{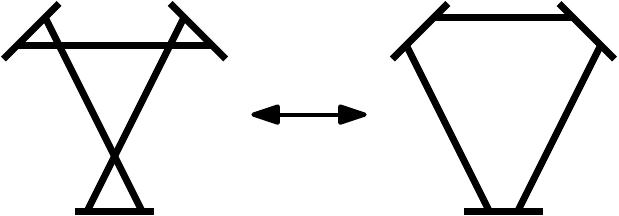} \\
			\end{matrix}
		\end{equation*}
	where open line segments depict neighbourhoods of the chord endpoints in the core circles.
\end{definition}

A free link is \(2\)-colourable if it has a representative that is \(2\)-colourable as a simple Gauss diagram (as in \Cref{2Def:altgausscolour}).

\begin{proposition}\label{5Prop:freelinks}
	The \(2\)-colour parity descends to a parity on \(2\)-colourable oriented free links.
\end{proposition}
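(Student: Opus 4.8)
The plan is to reformulate the $2$-colour parity so that it acts directly on simple Gauss diagrams, and then to read off the free link parity axioms from \Cref{2Prop:colourparity} using the fact that the defining moves of free links (\Cref{5Def:freelinks}) are the ``shadows'' of the classical Reidemeister moves. First I would dispatch two preliminary points. For the notion of a $2$-colourable free link to be well posed one needs $2$-colourability to be invariant under the moves of \Cref{5Def:freelinks}; by \Cref{2Thm:bijection} a simple Gauss diagram is $2$-colourable precisely when no core circle is degenerate, i.e.\ when every core circle carries an even number of chord endpoints, and each of the four moves changes the number of chord endpoints on every core circle by an even number, so the condition persists and every representative of a $2$-colourable free link is itself $2$-colourable. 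Secondly, and more importantly, I would observe that the $2$-colour parity is \emph{local} on the Gauss diagram: the value $\cp_{\mathscr{C}}(c)$ prescribed in \Cref{2Eq:oddeven} depends only on the orientations of the two strands through $c$ and the colours of the four arcs incident to $c$, and under the bijection of \Cref{2Thm:bijection} (which matches arcs of $S(D)$ with intervals of $G(D)$) this is exactly the data carried at the corresponding chord of $G(D)$ by an orientation together with a $2$-colouring in the sense of \Cref{2Def:altgausscolour}. Hence one obtains a well-defined assignment of a parity to every chord of an oriented, $2$-coloured simple Gauss diagram, agreeing with \Cref{2Def:colourparity} whenever the diagram is presented as $G(D)$.

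Next I would state the free link parity axioms as the evident analogues of \Cref{2Def:parityaxioms}: the objects are simple Gauss diagrams, the morphisms are sequences of the moves of \Cref{5Def:freelinks}, and one asks for the analogues of axioms $(0)$--$(3)$ with ``crossing'' replaced by ``chord'' and the Reidemeister moves replaced by the corresponding free moves. The link with \Cref{2Prop:colourparity} is that every instance of a free move on a simple Gauss diagram $G$ lifts to a classical Reidemeister move on a suitable virtual link diagram $D$ with $G = G(D)$: one chooses over/under information at the crossings involved in the move so that the move becomes an honest classical Reidemeister move, as is standard in the Gauss-diagrammatic description of free links \cite{Manturov2010}. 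Under this lift a $2$-colouring of $G$ corresponds to one of $D$ by \Cref{2Thm:bijection}, and by the locality observation the chord-level parity on $G$ equals the crossing-level parity on $D$.

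With this dictionary the verification is purely formal. Axiom $(0)$ is immediate from locality, since a free move alters $G$ only in a neighbourhood of the chords it involves and therefore leaves the parities of the remaining chords untouched. For axioms $(1)$, $(2)$ and $(3)$ one lifts the free move to a classical Reidemeister move $D \to D'$ as above and invokes \Cref{2Prop:colourparity}, which asserts precisely that $\cp_{\mathscr{C}}$ obeys these axioms under classical Reidemeister moves; both versions of the third free move lift to Reidemeister III moves, so \Cref{2Prop:colourparity} covers them, and since it shows $\cp_{\mathscr{C}}$ is a \emph{strong} parity the resulting free parity is strong as well. One may also note that on one-component free links this parity coincides with Manturov's parity of free knots, because on virtual knots it is the Gaussian parity (\Cref{2Prop:Gparityrelation}). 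I expect the only genuinely delicate step to be the locality observation combined with the compatibility of the lift ``free move $\mapsto$ Reidemeister move'' with the bijection of \Cref{2Thm:bijection} on $2$-colourings; once these are in place everything is inherited from \Cref{2Prop:colourparity}.
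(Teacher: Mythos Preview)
Your proposal is correct and follows essentially the same strategy as the paper: both reformulate $\cp_{\mathscr{C}}$ directly on oriented, $2$-coloured simple Gauss diagrams and then check the parity axioms against the moves of \Cref{5Def:freelinks}. The paper is terser---it simply writes down the chord-level rule and declares the axioms ``easy to see''---whereas you supply the extra care (invariance of $2$-colourability under the free moves, locality, and the lift of each free move to a classical Reidemeister move so as to invoke \Cref{2Prop:colourparity}); this is a welcome elaboration rather than a different argument.
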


\begin{proof}
	First, we show that the \(2\)-colour parity may be determined directly from a simple Gauss diagram. Let \( G \) be a \(2\)-colourable oriented simple Gauss diagram  (as in \Cref{2Def:altgausscolour}). Given a \( \mathscr{C} \) a \(2\)-colouring of \( G \), define a parity, \( \cp_{\mathscr{C}} \), on the chords of \( G \) in the following manner:
	\begin{equation*}
		\begin{aligned}
			&\cp_{\mathscr{C}} \left( \raisebox{-14pt}{\includegraphics[scale=0.65]{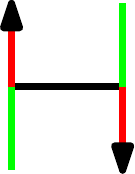}} \right) = \cp_{\mathscr{C}} \left( \raisebox{-14pt}{\includegraphics[scale=0.65]{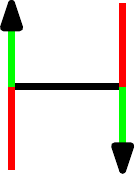}} \right) = 1 \\
			&\cp_{\mathscr{C}} \left( \raisebox{-14pt}{\includegraphics[scale=0.65]{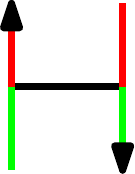}} \right) = \cp_{\mathscr{C}} \left( \raisebox{-14pt}{\includegraphics[scale=0.65]{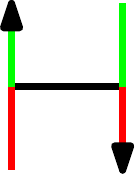}} \right) = 0.
		\end{aligned}
	\end{equation*}
	It is easy to see that this designation is equivalent to that given in \Cref{2Eq:oddeven}, and that it satisfies the parity axioms with respect to the moves given in \Cref{5Def:freelinks}.
\end{proof}

In \cite{Manturov2012} Manturov defines a parity of free knots, and extends it to free links appearing in concordances between free knots (\cite{Manturov2012} also contains the relevant definitions regarding free knot concordance). Further, in \cite{Manturov2016} an extension of parity to \(2\)-component virtual links is given. The \(2\)-colour parity has the advantage that it may be computed directly from a simple Gauss diagram, without requiring additional concordance information.

While the \(2\)-colour parity descends to free links, the \(2\)-colour writhe does not. For example, the virtual knot given in \Cref{2Fig:knot21} has non-trivial \(2\)-colour writhe, but represents a trivial free knot.

\subsubsection{The affine index polynomial}\label{5Subsub:affine} The affine index polynomial is an invariant of virtual knots due to Kauffman \cite{Kauffman2013}, which is related to the Gaussian parity. In \cite{Kauffman2018} Kauffman demonstrates that the affine index polynomial is a concordance invariant, and determines the class of virtual links to which it extends. Kauffman refers to such virtual links as \emph{compatible}. We demonstrate that the set of compatible virtual links is a proper subset of that of \( 2 \)-colourable virtual links.

\begin{definition}[\cite{Kauffman2018}]
	Let \( D \) be an oriented virtual link diagram. We say that \( D \) is \emph{compatible} if every component of \( D \) has algebraic intersection number zero with the remainder of \( D \).
\end{definition}
(Here algebraic intersection should be understood as signed intersection in the plane.)

\begin{proposition}
	A compatible virtual link is \(2\)-colourable.
\end{proposition}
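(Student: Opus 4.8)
The plan is to reduce the statement to the combinatorial characterisation of $2$-colourability recorded in \Cref{2Thm:bijection}: a diagram $D$ of $L$ is $2$-colourable exactly when its simple Gauss diagram $G(D)$ has no degenerate core circle, i.e.\ no core circle carrying an odd number of chord endpoints. So I would take a compatible diagram $D = D_1 \cup D_2 \cup \cdots \cup D_n$ of $L$ and show that every core circle of $G(D)$ is non-degenerate.

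First I would unwind the definition of compatibility. The signed intersection number in the plane of a component $D_i$ with $D \setminus D_i$ counts the classical crossings between $D_i$ and the other components, each with its sign; the virtual crossings are artefacts of the planar projection and are not counted (were they counted, the quantity would vanish automatically for planar closed curves and the hypothesis would be vacuous). Hence this number is precisely $lk(D_i, D \setminus D_i) = \sum_{j \neq i} lk(D_i, D_j)$ in the notation of \Cref{3Def:linkingnumber}, and $D$ being compatible says $lk(D_i, D \setminus D_i) = 0$ for every $1 \leq i \leq n$.

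Next I would count chord endpoints on the core circle of $G(D)$ associated to $D_i$: a self-crossing of $D_i$ contributes both of its endpoints to that circle, while a mixed crossing between $D_i$ and another component contributes exactly one. Thus the number of endpoints on that circle is congruent mod $2$ to the number of mixed crossings incident to $D_i$, which in turn has the same parity as $lk(D_i, D \setminus D_i)$ — this is the same observation used in the proof of \Cref{4Prop:2colourconcordance}. Since $lk(D_i, D \setminus D_i) = 0$ is even, the circle carries an even number of chord endpoints and is non-degenerate. Applying \Cref{2Thm:bijection} to $D$ then yields that $D$, and hence $L$, is $2$-colourable.

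I do not anticipate a real obstacle: the only point requiring care is the translation of ``algebraic intersection number in the plane'' into the linking-number language of the paper, after which the argument is just the mod-$2$ parity count already appearing in \Cref{4Prop:2colourconcordance}. One could alternatively argue directly on $G(D)$ — degeneracy of a core circle is manifestly a mod-$2$ condition on its incident chord endpoints — but routing through linking numbers keeps the proof consistent with the conventions of the surrounding sections.
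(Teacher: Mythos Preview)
Your proposal is correct and is essentially the paper's argument run in the forward direction rather than via the contrapositive: the paper starts from a degenerate core circle, deduces an odd number of mixed crossings on that component, and concludes the algebraic intersection with the remainder cannot vanish, which is exactly your mod-$2$ endpoint count read backwards. The only cosmetic difference is that you route the odd-mixed-crossing observation through the linking number $lk(D_i, D\setminus D_i)$ and cite \Cref{4Prop:2colourconcordance}, whereas the paper states it directly.
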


\begin{proof}
	We prove the contrapositive. Let \( D \) have a degenerate component (as in \Cref{2Def:degen}). An odd number of chord endpoints lie on this component, so that there must be an odd number of mixed crossings between it and the remainder of the diagram. It follows immediately that this component cannot have algebraic intersection zero with the remainder of the diagram.
\end{proof}

\begin{figure}
	\includegraphics[scale=0.75]{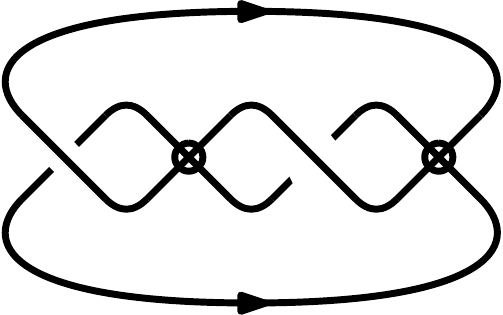}
	\caption{A \(2\)-colourable virtual link diagram that is not compatible.}
	\label{5Fig:unlabelable}
\end{figure}

\Cref{5Fig:unlabelable} depicts a \(2\)-colourable virtual link that is not compatible, from which it follows that the set of compatible virtual links is a proper subset of that of \( 2 \)-colourable virtual links.

\subsubsection{Xu's index polynomial}\label{5Subsub:xu} In \cite{Xu2018} Xu defines an index polynomial of virtual links, using an index theory related to the IP parity. The definition of this polynomial also suffers from the defect of handling self- and mixed crossings differently. As a consequence, it cannot detect virtual links possessing a diagram with no self crossings, a blind-spot that the \(2\)-colour writhe does not possess. Examples of a virtual links not detected by Xu's index polynomial, but detected by the \(2\)-colour writhe are given in \Cref{3Fig:example1,5Fig:cbdetected,5Fig:unlabelable}.

\bibliographystyle{plain}
\bibliography{library}

\end{document}